\newtheorem{theorem}{Theorem}[section]
\newtheorem{lemma}[theorem]{Lemma}
\newtheorem{proposition}[theorem]{Proposition}
\theoremstyle{definition}
\newtheorem{definition}[theorem]{Definition}
\newtheorem{remark}[theorem]{Remark}
\newtheorem*{acknowledgements}{Acknowledgements}
\numberwithin{equation}{section}
\newcommand{\V}{\mathcal{V}}
\newcommand{\R}{\mathbb{R}}
\newcommand{\N}{\mathbb{N}}
\newcommand{\mH}{\mathcal{H}}
\newcommand{\Pcal}{\mathcal{P}}
\newcommand{\lan}{\langle}
\newcommand{\ran}{\rangle}
\newcommand{\Om}{\Omega}
\newcommand{\ka}{\kappa}
\newcommand{\mB}{\mathbb{B}}
\newcommand{\mZ}{\mathbb{Z}}
\newcommand{\mS}{\mathbb{S}}
\newcommand{\tM}{\widetilde{M}}
\newcommand{\spt}{\operatorname{spt}}
\newcommand{\dist}{\operatorname{dist}}
\newcommand{\Div}{\operatorname{div}}
\newcommand{\Hess}{\operatorname{Hess}}
\newcommand{\interior}{\operatorname{int}}
\newcommand{\Ric}{\operatorname{Ric}}
\newcommand{\Clos}{\operatorname{Clos}}
\newcommand{\rom}[1]{\expandafter\romannumeral #1}
\newcommand{\bd}{\partial}
\newcommand{\clos}{\operatorname{Clos}}
\newcommand{\laa}{\langle}
\newcommand{\raa}{\rangle}
\newcommand{\grad}{\nabla}
\newcommand{\del}{\nabla}
\newcommand{\ric}{\operatorname{Ric}}
\renewcommand{\div}{\operatorname{div}}
\newcommand{\eps}{\varepsilon}
\title[Curvature estimates for stable FBMHs in locally wedge-shaped manifolds]{Curvature estimates for stable free boundary minimal hypersurfaces in locally wedge-shaped manifolds}
\author[Liam Mazurowski]{Liam Mazurowski}
\address{Cornell University, Department of Mathematics, Ithaca, New York 14850}
\email{lmm334@cornell.edu}
\author{Tongrui Wang}
\address{Institute for Theoretical Sciences, Westlake Institute for Advanced Study, Westlake University, Hangzhou, Zhejiang, 310024, China}
\email{wangtongrui@westlake.edu.cn}
\begin{document}

\begin{abstract}
	In this paper, we consider locally wedge-shaped manifolds, which are Riemannian manifolds that are allowed to have both boundary and certain types of edges. 
    We define and study the properties of free boundary minimal hypersurfaces inside locally wedge-shaped manifolds. 
    In particular, we show a compactness theorem for free boundary minimal hypersurfaces with curvature and area bounds in a locally wedge-shaped manifold. 
    Additionally, using Schoen-Simon-Yau’s estimates, we also prove a Bernstein-type theorem indicating that, under certain conditions, a stable free boundary minimal hypersurface inside a Euclidean wedge must be a portion of a hyperplane. 
    As our main application, we establish a curvature estimate for sufficiently regular free boundary minimal hypersurfaces in a locally wedge-shaped manifold with certain wedge angle assumptions. We expect this curvature estimate will be useful for establishing a min-max theory for the area functional in wedge-shaped spaces.
\end{abstract}

\keywords{Free boundary minimal hypersurfaces, Curvature estimates, Wedge-shaped}
\subjclass[2010]{Primary 53A10, 53C42}

\maketitle

\section{Introduction}

Let $M$ be a manifold with boundary. A free boundary minimal hypersurface $\Sigma$ in $M$ is a critical point of the area functional whose boundary $\bd \Sigma$ is constrained to lie within $\bd M$ but can otherwise move freely.  Courant \cite{courant2005dirichlet} was among the first to investigate the properties of free boundary minimal surfaces.  A typical problem he considered is the following: given a smooth torus $S$ embedded in $\R^3$, find a disk type minimal surface with free boundary on $S$ which ``spans the hole'' in $S$. Using the mapping method developed by Douglas and Rado to solve the Plateau problem, Courant proved the existence of a solution in the form of a conformal, Dirichlet energy minimizing map $D^2\to \R^3$. Later Str\"uwe \cite{struwe1984free} showed the existence of an unstable disk type solution to the same problem. Using techniques from geometric measure theory, Gr\"uter and Jost \cite{gruter1986allard} proved the existence of an unstable, embedded minimal disk with free boundary inside any convex domain in $\R^3$ with smooth boundary.

In the 1960s, Almgren began to develop a variational theory for the area functional on Riemannian manifolds based on the tools of geometric measure theory.  While, in general, a manifold $M$ may not contain any area minimizing (free boundary) minimal hypersurfaces, Almgren \cite{almgren1965theory} was able to use min-max methods to prove the existence of stationary varifolds in any $M$.  These stationary varifolds are a weak measure theoretic notion of minimal hypersurfaces.  (Almgren's theory in fact produces stationary varifolds of arbitrary codimension, but in this paper we will only consider the codimension one case.) In the early 1980s, Pitts \cite{pitts2014existence} showed that when $M$ is closed and $3 \le \dim(M)\le 6$, the stationary varifold found by Almgren is actually a smooth, closed, embedded minimal hypersurface. A crucial ingredient for establishing the regularity is a curvature estimate for stable minimal hypersurfaces first proved by Schoen-Simon-Yau \cite{schoen1975curvature}.  This curvature estimate was later improved by Schoen and Simon \cite{schoen1981regularity}. Their improved curvature estimate implies that Almgren's stationary varifold is in fact smooth off of a set of codimension 7.  

When $M$ has a non-empty boundary, the variational theory of Almgren still gives the existence of a stationary varifold with free boundary. Li and Zhou \cite{li2021min} established the regularity theory for min-max minimal hypersurfaces in this free boundary setting. 
Again curvature estimates for stable free boundary minimal hypersurfaces play an important role in establishing regularity. In the free boundary setting, these curvature estimates can be established by blowing up to get a stable free boundary hypersurface in a half-space, and then reflecting to get a stable minimal hypersurface (with no boundary) in Euclidean space. 

In a series of four papers in the 1990s, Hildebrandt and Sauvigny (\cite{hildebrandt1997minimal1}\cite{hildebrandt1997minimal2}\cite{hildebrandt1999minimal3}\cite{hildebrandt1999minimal4}) studied free boundary minimal surfaces inside a wedge in $\R^3$. Among other things, they found that the behavior of such minimal surfaces depends crucially on the wedge angle $\theta$.  Recently, there has been renewed interest in understanding free boundary minimal hypersurfaces and related classes of surfaces in less regular domains. For instance, Edelen and Li \cite{edelen2022regularity} have proved an {Allard-type} theorem for free boundary minimal hypersurfaces in locally polyhedral domains. Their result in particular implies the partial regularity of area minimizing free boundary minimal hypersurfaces in locally polyhedral domains.  Also, capillary surfaces in polyhedrons were the key ingredient in Li's \cite{li2020polyhedron} proof of Gromov's dihedral rigidity conjecture. {Moreover, the equivariant min-max theory established by the second author in \cite{wang2022min}\cite{wang2023min} can also be seen as a variational theory in the singular orbit space.} 

Ultimately, it would be very interesting to establish a free boundary min-max theory in less regular spaces. To accomplish this, a crucial first step is to prove curvature estimates for stable free boundary minimal hypersurfaces in these spaces. The goal of this paper is to prove such curvature estimates for free boundary minimal hypersurfaces in a class of spaces we call {\em locally wedge-shaped manifolds}. We also prove a {Bernstein-type} theorem and a compactness theorem for stable free boundary minimal hypersurfaces in locally wedge-shaped manifolds.  Before stating the theorems, we first need to explain the terminology. 

Roughly speaking, a locally wedge-shaped manifold $M^{n+1}$ consists of 3 strata: an $n+1$ dimensional set of interior points where $M$ looks locally like $\R^{n+1}$, an $n$ dimensional set of face points where $M$ is locally modeled on a half-space $\R^{n+1}_+$, and an $n-1$ dimensional set of edge points where $M$ is locally modeled on a wedge in $\R^{n+1}$. 
The meeting angle $\theta$ between the two faces of $M$ is allowed to vary along an edge, although we assume that $0< \theta < \pi$ at all edge points. We write $\operatorname{int}(M)$ for the set of interior points of $M$, $\bd^F M$ for the set of face points of $M$, and $\bd^E M$ for the set of edge points of $M$. See Section \ref{Sec: preliminary} for precise definitions. 
\begin{figure}[h]
\centering
\includegraphics[width=2in]{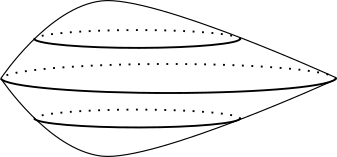}
\caption{A locally wedge-shaped manifold}
\label{fig1}
\end{figure}
Figure \ref{fig1} illustrates a simple locally wedge-shaped manifold $M^3$ which looks like a mussel. There is a single edge along the center, two faces formed by the upper and lower caps, and an interior region bounded by the two faces. Note that the angle between the two faces is allowed to vary along the edge.

Let $\Sigma^n\subset M^{n+1}$ be a locally wedge-shaped manifold of dimension one less than $M$.  We call such a $\Sigma$ a locally wedge-shaped hypersurface ($1$-codimentional submanifold) of $M$. We say that $\Sigma$ is {\em properly embedded} in $M$ if $\bd^E \Sigma \subset \bd^E M$ and $\bd^F \Sigma \subset \bd^F M$ and $\operatorname{int}(\Sigma)\subset \operatorname{int}(M)$.  As in the case of manifolds with {smooth} boundary, it may happen that the limit of a sequence of properly embedded locally wedge-shaped submanifolds fails to be properly embedded. Thus we shall also consider {\em almost properly embedded} locally wedge-shaped submanifolds. Here we say $\Sigma\subset M$ is almost properly embedded if $\bd^E\Sigma\subset \bd^E M$ and $\bd^F\Sigma \subset \bd^E M \cup \bd^F M$ and $\operatorname{int}(\Sigma)\subset \bd^F M \cup \operatorname{int}(M)$. 
Thus, when $\Sigma$ is almost properly embedded, we allow interior points of $\Sigma$ to contact the faces of $M$ and we allow face points of $\Sigma$ to contact the edge of $M$. 

In Section \ref{Sec: variations in wedge manifold}, we define free boundary minimal hypersurfaces in a locally wedge-shaped manifold $M$.  The definition requires some care since we would like to include classical properly embedded free boundary hypersurfaces without edges, while also obtaining a closed condition. Assume $\Sigma^n\subset M^{n+1}$ is an almost properly {embedded} locally wedge-shaped hypersurface. We define a class of vector fields $\mathfrak X(M,\Sigma)$ depending on both $M$ and $\Sigma$. This class has the property that for any $X$ in $\mathfrak X(M,\Sigma)$, the flow of $X$ preserves the almost proper embeddedness of $\Sigma$ for a short time. Moreover, vector fields in $\mathfrak X(M,\Sigma)$ are allowed to push interior points of $\Sigma$ off the faces of $M$ and to push face points of $\Sigma$ off the edge of $M$. We then say $\Sigma$ is a free boundary minimal hypersurface if the first variation of area vanishes for all vector fields in $\mathfrak X(M,\Sigma)$. This turns out to be equivalent to asking that the mean curvature of $\Sigma$ vanishes at all points $p\in \operatorname{int}(\Sigma)$ and that the unit co-normal $\eta$ to $\Sigma$ is perpendicular to at least one face of $M$ at all points in $\bd^F\Sigma$. 
\begin{figure}[h]
\centering
\begin{subfigure}{0.4\linewidth}
\centering
\includegraphics[width=2in]{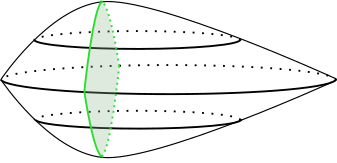} 
\caption{A free boundary minimal surface}
\end{subfigure}
\hspace{1cm}
\begin{subfigure}{0.3\linewidth}
\centering
\includegraphics[width=2in]{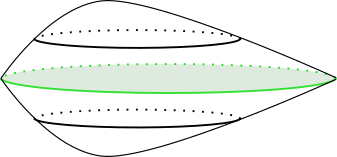}
\caption{A non-example}
\end{subfigure}
\caption{}
\end{figure}
In Figure 2(A), $\Sigma$ is properly embedded with two edge points. This surface $\Sigma$ is a free boundary minimal hypersurface since the mean curvature vanishes in the interior and the outward unit co-normal $\eta(p)$ is perpendicular to at least one  face of $M$ at all $p\in \bd^F\Sigma$. In Figure 2(B), $\Sigma$ is almost properly embedded but not properly embedded. This surface $\Sigma$ is not a free boundary minimal surface according to our definition since the outward unit co-normal along the face of $\Sigma$ is not perpendicular to either face of $M$.

The regularity of free boundary minimal hypersurfaces in a locally wedge-shaped domain seems to be a delicate issue. By analogy with the Neumann problem for the laplacian in a wedge, we expect the optimal regularity of a free boundary minimal hypersurface $\Sigma^n \subset M^{n+1}$ is only $C^{1,\alpha}$ along the edge of $\Sigma$, even in the area minimizing case.  Because this paper deals with curvature estimates, we will always assume a priori that our free boundary minimal hypersurfaces are smooth away from the edge and at least $C^{1,1}$ up to and including the edge. 
In particular, if on each connected component $\bd^E_i M$ of $\bd^E M$, either
\begin{align}
    &\theta(p)<\frac{\pi}{2} \mbox{ for all } p\in \bd^E_i M ; \mbox{ or } \tag{$\dagger$}\label{dag}
    \\ &\theta(p)\equiv\frac{\pi}{2} \mbox{ and } A_{\bd^+M}(\eta_+, v) = A_{\bd^-M}(\eta_-, v) \mbox{ for all } p\in \bd^E_i M, v\in T_p(\bd^EM), \tag{$\ddagger$}\label{ddag}
\end{align}
where $A_{\bd^{\pm}M}$ is the second fundamental form (with respect to the inward unit normal) of the two faces $\bd^{\pm}M$ of $M$ meeting along $\bd^E_i M$, and $\eta_{\pm}$ is the outward unit co-normal of $\bd^{\pm}M$ along $\bd^E M$. 
Then we show in another paper (\cite[Theorem 2.17]{mazurowski2023min}) that the regularity of free boundary minimal hypersurfaces can be upgraded to $C^{2,\alpha}$ up to and including the edge so that this priori $C^{1,1}$ regularity assumption is automatically satisfied.
Indeed, the elliptic estimates (\cite[Appendix C]{mazurowski2023min}) are also needed to obtain the curvature estimates in the final blow-up argument.

We can now state our main results. {For simplicity, we often abbreviate `free boundary minimal hypersurfaces' as `FBMHs'.} The first is a compactness theorem for locally wedge-shaped free boundary minimal hypersurfaces with bounded curvature and area. 

\begin{theorem}
    \label{theorem:main1}
    Let $M^{n+1}$ be a locally wedge-shaped {Riemannian} manifold. Let $\{\Sigma_i^n\}_{i\in\N}\subset M^{n+1}$ be a sequence of $C^{1,1}$-to-edge almost properly embedded, locally wedge-shaped, FBMHs so that 
    $$ \sup_{i}{\rm Area}(\Sigma_i)\leq C \qquad {\rm and} \qquad \sup_{i}\sup_{\Sigma_i}|A_{\Sigma_i}|\leq C $$
    for some constant $C>0$. 
    Then after passing to a subsequence, $\Sigma_i$ converges $C^{1,\alpha}$-to-edge $(\forall \alpha\in (0,1))$, $C^\infty$ away from the edge, locally uniformly to an almost properly embedded $C^{1,1}$-to-edge locally wedge-shaped hypersurface $\Sigma\subset M$ {(with multiplicity)} which is also minimal in $M$ with free boundary. 
\end{theorem}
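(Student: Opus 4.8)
The plan is to run the standard graphical compactness argument for minimal hypersurfaces with bounded second fundamental form, localized according to the three strata $\interior(M)$, $\bd^F M$, $\bd^E M$: the curvature bound supplies a uniform lower bound on the radius over which each $\Sigma_i$ stays graphical, the area bound caps the number of sheets, and since $\sup_{\Sigma_i}|A_{\Sigma_i}|$ is uniformly bounded there is no curvature concentration, so no bubbling occurs.

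First I would fix a locally finite cover of $M$ by coordinate charts of three types in which the metric is $C^3$-close to flat: interior charts modeled on a Euclidean ball, face charts modeled on a half-ball in $\R^{n+1}_+$ with $\bd^F M$ straightened to $\{x_{n+1}=0\}$ via Fermi coordinates, and edge charts modeled on $B\cap W$ for a model wedge $W$ with $\bd^E M$ straightened to $\{x_n=x_{n+1}=0\}$. In interior charts, the classical lemma ``bounded second fundamental form $\Rightarrow$ uniformly graphical'' represents $\Sigma_i$, on balls of a fixed radius $r_0>0$, as a union of at most $N$ graphs ($N$ controlled by the area bound) of functions bounded in $C^{1,1}$ independently of $i$. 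In face charts, the free boundary condition --- $\eta_{\Sigma_i}$ perpendicular to $\bd^F M$ along $\bd^F\Sigma_i$ --- lets one either reflect $\Sigma_i$ across $\{x_{n+1}=0\}$ to a $C^{1,1}$ hypersurface without boundary in the (Lipschitz) doubled metric, or work directly with the Neumann-type equation, to get the analogous uniform $C^{1,1}$-graphical description over $n$-planes meeting $\bd^F M$ orthogonally. In edge charts one represents $\Sigma_i$ as a $C^{1,1}$-to-edge graph over a model locally wedge-shaped reference hypersurface in $W$: almost proper embeddedness (which forces $\bd^E\Sigma_i\subset\bd^E M$ and $\bd^F\Sigma_i\subset\bd^E M\cup\bd^F M$) locates $\Sigma_i$ coarsely, and the a priori $C^{1,1}$-to-edge regularity together with the curvature bound and the elliptic estimates of \cite[Appendix C]{mazurowski2023min} for the FBMH (Neumann) equation in a wedge upgrade this to the desired $i$-independent $C^{1,1}$-to-edge bounds.

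With uniform $C^{1,1}$ bounds on all local graph functions, Arzel\`a--Ascoli gives a subsequence along which every graph function converges in $C^{1,\alpha}$ for all $\alpha\in(0,1)$ and the number of sheets stabilizes; patching the local limits produces a closed set $\Sigma\subset M$ that is locally a finite union of $C^{1,1}$ graphs of the corresponding types, hence an almost properly embedded, $C^{1,1}$-to-edge locally wedge-shaped hypersurface, with $\Sigma_i\to\Sigma$ in $C^{1,\alpha}$-to-edge and with integer multiplicity recording the number of sheets. Almost proper embeddedness of $\Sigma$ is inherited from that of the $\Sigma_i$ because $C^1$-convergence preserves the stratum containments, the only possible degenerations --- an interior sheet limiting onto a face of $M$, or a face point limiting onto $\bd^E M$ --- being precisely what ``almost'' allows. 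Away from $\bd^E M$, each sheet of $\Sigma_i$ solves the minimal surface equation (interior) or the minimal surface equation with the free boundary Neumann condition on $\bd^F M$ (face), a uniformly elliptic quasilinear equation with smooth coefficients, so interior and boundary Schauder estimates bootstrap the uniform $C^{1,1}$ bounds to uniform $C^\infty$ bounds on compact subsets of $M\setminus\bd^E M$; thus the convergence is $C^\infty_{loc}$ there and the limit sheets satisfy the same equations, i.e.\ $\Sigma$ is minimal in $M\setminus\bd^E M$ with free boundary on $\bd^F M$, while the co-normal condition along $\bd^F\Sigma$ (including up to $\bd^E\Sigma$) passes to the limit by $C^1$-convergence of the $\Sigma_i$ and their co-normals. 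Finally, testing the first variation of area of $\Sigma$ against vector fields in $\mathfrak{X}(M,\Sigma)$ and using these facts shows that the first variation vanishes, so $\Sigma$ is an FBMH in $M$.

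The main obstacle is the edge case of the first step: establishing a uniform $C^{1,1}$-to-edge graphical representation of $\Sigma_i$ near an edge point. Unlike the interior and face cases, there is no reflection straightening a wedge of angle $\theta\ne\pi/2$, so one cannot reduce to closed- or half-space compactness; one must genuinely combine almost proper embeddedness, the a priori $C^{1,1}$-to-edge regularity, the curvature bound, and the wedge elliptic estimates to obtain quantitative, $i$-independent control up to the edge, and simultaneously control the possible collision of sheets along $\bd^E M$ without losing the graphical structure. Everything else is a routine adaptation of well-known minimal surface compactness arguments, made simpler here by the a priori curvature bound.
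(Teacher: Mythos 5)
Your overall strategy -- uniform local graphical representation from the curvature bound, Arzel\`a--Ascoli, elliptic bootstrap away from the edge, oblique-derivative theory at the edge -- is the same as the paper's, but the proposal leaves two genuine gaps. First, the step you yourself flag as ``the main obstacle'' (a uniform, $i$-independent $C^{1,1}$-to-edge graphical representation near $\partial^E M$) is exactly where the content lies, and it is not resolved by the combination you describe: the a priori $C^{1,1}$-to-edge regularity of each $\Sigma_i$ is purely qualitative and gives no $i$-independent constant, and the wedge elliptic estimates cannot be invoked until you already know each $\Sigma_i$ is a graph over a fixed-size wedge domain with controlled gradient. The paper closes this circle differently: in Lemma \ref{Lem: small curvature implies graphical} the uniform graph radius, the gradient bound $|\nabla\hat u|\le 1$, and the Hessian bound $|\Hess(\hat u)|\lesssim 1/s$ are all extracted \emph{directly} from the hypothesis $\sup|A_{\Sigma_i}|\le C$ via the Gauss-map oscillation argument of Colding--Minicozzi, together with a separate geodesic argument (using the second fundamental forms of $\partial^F M$ and $\partial^E M$) to show the intrinsic ball contains the extrinsic connected component; Lemma \ref{Lem: properness of graph} then pins down over which reference plane (horizontal wedge, or vertical half-plane in one of the local extensions $\tM^{\pm}_{p_0,r_0}$) the graph lives, according to whether the base point is an edge, false-edge, or face point. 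Without an argument of this type your edge charts have no quantitative content, and the rest of the proof cannot start.

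Second, your patching step asserts that the local $C^{1,\alpha}$ limits assemble into an embedded hypersurface with multiplicity, but this requires ruling out two distinct limit sheets meeting tangentially. Away from $\partial^E M$ this is the standard strong maximum principle, but at an edge point the relevant equation is a quasilinear oblique-derivative problem in a wedge (conditions (\ref{Eq: boundary condition of FBMH on horizontal plane})--(\ref{Eq: boundary condition of FBMH on vertical plane})), where no reflection is available and the classical Hopf lemma does not directly apply; the paper needs Theorem \ref{Thm: maximum principle} (built on Lieberman's maximum principle for oblique-derivative problems) precisely for this, plus a separate transversality argument to exclude the case where one limit sheet has an edge point and the other does not. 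Your proposal also silently uses Fermi coordinates at $\partial^F M$ and a ``finite union of sheets'' count; the latter should be justified by the edge monotonicity formula (Proposition \ref{Prop: monotonicity fomula}), which is what the paper uses to bound the number of components meeting a small ball centered on $\partial^E M$. With these three ingredients supplied -- the quantitative graphical lemma at the edge, the wedge maximum principle for the patching, and the monotonicity formula for the sheet count -- your outline becomes the paper's proof.
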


The second main result is a {Bernstein-type} theorem which says that stable locally wedge-shaped FBMHs inside a Euclidean wedge with Euclidean volume growth must be planes. See Section \ref{Sec: variations in wedge manifold} for a discussion of stability in our setting. Here we obtain a stronger result when the wedge angle $\theta$ is no more than $\pi/2$. Because global reflection arguments are no longer available, our strategy is to directly follow Schoen-Simon-Yau's original proof in a wedge. We then use local reflection arguments to show that the extra boundary terms appearing in the formulas actually vanish. 

\begin{theorem}
\label{theorem:main2}
Fix a dimension $3\le n+1\le 6$ and let $\Omega^{n+1}$ be an $(n+1)$-dimensional wedge domain in $\R^{n+1}$ with wedge angle $\theta \le \pi/2$. Assume that $\Sigma$ is a $2$-sided $C^{1,1}$-to-edge almost properly embedded FBMH in $\Omega$ which is stable according to Definition \ref{definition:stability}. Further assume that $\Sigma$ has Euclidean volume growth: there is a constant $C > 0$ such that 
    \[
    \mathcal H^{n}(\Sigma\cap \mathbb {B}^{n+1}_r(0))\le Cr^n
    \]
    for all $r > 0$. Then $\Sigma = \Omega \cap P$ where $P\subset \R^{n+1}$ is an $n$-plane.  If $\theta > \pi/2$, the conclusion still holds provided we assume that $\Sigma$ is properly embedded. 
\end{theorem}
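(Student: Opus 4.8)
The plan is to run the Schoen--Simon--Yau argument \cite{schoen1975curvature} directly inside the wedge $\Omega$, using the flatness of its faces and the free boundary condition to kill the boundary terms that would otherwise obstruct the estimate. First I would observe that it suffices to prove $A_\Sigma\equiv 0$ on $\Sigma$: a totally geodesic hypersurface of $\R^{n+1}$ is a piece of an affine hyperplane, so if $A_\Sigma\equiv 0$ then $\operatorname{int}(\Sigma)$ lies in a single hyperplane $P$ (using that $\Sigma$ is connected), and since the wedge $\Omega$ is convex the slice $\Omega\cap P$ is connected; the (almost) proper embeddedness of $\Sigma$ then forces $\Sigma=\Omega\cap P$. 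The Euclidean volume growth is automatic in this case and is not needed for this reduction.

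Next I would record the two basic inequalities in the present setting. Because $\Omega\subset\R^{n+1}$ is a flat wedge, the ambient Ricci curvature vanishes and the faces $\bd^{\pm}\Omega$ are totally geodesic, so the face term $\int_{\bd^F\Sigma}A_{\bd^{\pm}\Omega}(\eta,\eta)\varphi^2$ in the second variation is identically zero; hence stability (Definition~\ref{definition:stability}) reduces to
\[
\int_\Sigma |A_\Sigma|^2\,\varphi^2 \ \le\ \int_\Sigma |\nabla\varphi|^2
\]
for every admissible Lipschitz $\varphi$, and crucially for $\varphi$ with no prescribed behavior along $\bd^F\Sigma$ (such $\varphi$ are realized by normal variations in $\mathfrak X(\Omega,\Sigma)$). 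On the smooth part of $\Sigma$, the Simons identity $\Delta_\Sigma A=-|A|^2A$ together with the refined Kato inequality $|\nabla A|^2\ge(1+\tfrac2n)|\nabla|A||^2$ gives the weak inequality
\[
|A|\,\Delta_\Sigma|A|\ \ge\ \tfrac2n\,|\nabla|A||^2\,-\,|A|^4 .
\]

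I would then plug $\varphi=|A|^{1+q}\psi$ into the stability inequality, multiply the Simons inequality by $|A|^{2q}\psi^2$, integrate by parts, and combine the two so as to absorb the gradient-of-$|A|$ terms; for $q^2<2/n$ this produces
\[
\int_\Sigma |A|^{4+2q}\psi^2\ \le\ C(n,q)\int_\Sigma |A|^{2+2q}\,|\nabla\psi|^2 .
\]
The new feature relative to the closed case — and what I expect to be the technical heart of the proof — is that the integration by parts in the Simons step generates a boundary integral $\int_{\bd\Sigma}|A|^{2q+1}\psi^2\,\bd_\eta|A|$, and controlling it is the main obstacle. Away from the edge I would dispose of it by \emph{local reflection}: the free boundary (orthogonality) condition together with the flatness of the faces implies that across $\bd^F\Sigma$ the hypersurface $\Sigma$ extends, by reflection in the face, to a hypersurface that is minimal and smooth up to the mirror, on which $|A|$ is an even function, so that $\bd_\eta|A|=0$ along $\bd^F\Sigma$ (here one uses that the conormal $\eta$ is, up to sign, the face normal). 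Carrying this out all the way up to the edge $\bd^E\Sigma$ is the delicate point, and this is exactly where the hypothesis on $\theta$ enters: when $\theta\le\pi/2$, reflecting $\Sigma$ across one face lands it in the still-convex wedge (or half-space) $W_{2\theta}$, where the reflected hypersurface is again an almost properly embedded, $C^{1,1}$-to-edge FBMH and the reflection principle applies, so that, the edge being codimension two in $\Sigma$, no edge boundary term survives — the integrability near $\bd^E\Sigma$ being handled by a cutoff supported away from the edge together with the $C^{1,1}$-to-edge bound and the elliptic estimates of \cite{mazurowski2023min}; whereas when $\theta>\pi/2$ the doubled region $W_{2\theta}$ is non-convex and this breaks down, so instead one requires $\Sigma$ properly embedded, which rules out the configurations in which $\bd^F\Sigma$ meets the edge and lets the same boundary analysis go through.

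Finally I would take $\psi=\psi_R$ with $\psi_R\equiv1$ on $B_R$, $\psi_R\equiv0$ outside $B_{2R}$ and $|\nabla\psi_R|\le2/R$; Hölder's inequality and the Euclidean volume growth $\mathcal H^n(\Sigma\cap B_{2R})\le CR^n$ turn the estimate above into a recursion $F(R)\le CR^{\alpha}F(2R)^{\theta_0}$ for $F(R):=\int_{\Sigma\cap B_R}|A|^{4+2q}$, where $\theta_0=\tfrac{2+2q}{4+2q}<1$. Iterating this (as in \cite{schoen1975curvature}, using that $F$ is finite on every ball and grows polynomially) yields
\[
\int_{\Sigma\cap B_R}|A|^{4+2q}\ \le\ C\,R^{\,n-4-2q}.
\]
One can choose $q$ with $\tfrac{n-4}{2}<q<\sqrt{2/n}$ exactly when $n\le5$, i.e.\ $n+1\le6$; for such $q$ the exponent $n-4-2q$ is negative, so letting $R\to\infty$ forces $\int_\Sigma|A|^{4+2q}=0$. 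Hence $A_\Sigma\equiv0$, and by the reduction step $\Sigma=\Omega\cap P$.
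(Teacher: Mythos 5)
Your proposal follows the same overall strategy as the paper: run Schoen--Simon--Yau directly in the wedge, use the flatness of the faces to kill the second-variation boundary term, use a local reflection across a face to show $\langle \nabla\vert A\vert,\eta\rangle=0$ on $\bd^F\Sigma$ so the extra boundary term in the Simons step vanishes, cut off logarithmically near the codimension-two edge, and close the argument with the volume growth and the dimension restriction $n\le 5$. All of that matches the paper's proof.

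There is, however, a genuine gap at the very first step. You assert that the stability inequality
$\int_\Sigma \vert A\vert^2\varphi^2\le\int_\Sigma\vert\nabla\varphi\vert^2$
holds ``for $\varphi$ with no prescribed behavior along $\bd^F\Sigma$'' because normal variations $f\nu$ are realized by vector fields in $\mathfrak X(\Omega,\Sigma)$. This is false when $\Sigma$ is only \emph{almost} properly embedded: at a false boundary point $p\in\operatorname{int}(\Sigma)\cap\bd^F\Omega$ one has $T_p\Omega=H_\pm^{n+1}$, so condition (i) of Definition \ref{admissible-vf} forces any admissible $X$ to point into $\Omega$ there, and $\nu$ at such a point is (up to sign) the face normal; hence $f\nu$ with $f$ of arbitrary sign does not extend to an element of $\mathfrak X(\Omega,\Sigma)$, and Definition \ref{definition:stability} does not give the two-sided inequality (\ref{equation:strong-stability}) you need (this is exactly the caveat in Remark \ref{Rem: Bernstein theorem}). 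The paper closes this gap by first proving that the touching set is empty when $\theta<\pi/2$ (Proposition \ref{prop:acute-touching-set}: the orthogonality condition at $\bd^{FE}\Sigma$ is incompatible with an acute angle, and an interior touching point forces $\Sigma$ to coincide with a face by the maximum principle, which is not stationary for acute angles) and that a non-empty touching set already forces flatness when $\theta=\pi/2$ (Proposition \ref{prop:right-touching-set}). Only after this reduction to the properly embedded case does the SSY machinery start. Relatedly, you misattribute the role of the hypothesis $\theta\le\pi/2$: it is not needed to make a reflection land in a convex doubled wedge (the reflection used to kill $\langle\nabla\vert A\vert,\eta\rangle$ is purely local across a single flat face and works for any angle, which is why the theorem also holds for $\theta>\pi/2$ under proper embeddedness); it is needed precisely and only for the touching-set reduction just described. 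Your proof as written does establish the properly embedded case for all angles, but the almost properly embedded case with $\theta\le\pi/2$ --- the part of the statement that actually uses the angle hypothesis --- is not addressed.
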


Finally, combining the above two theorems with a blowup argument, we conclude a curvature estimate for locally wedge-shaped stable FBMHs as a main application. 
As in \cite{pitts2014existence}, this curvature estimate is an essential ingredient in the min-max construction for FBMHs within locally wedge-shaped Riemannian manifolds developed later by the two authors in \cite{mazurowski2023min}.

\begin{theorem}
    \label{theorem:main3}
    Let $3\leq n+1\leq 6$, and let $M^{n+1}\subset \R^L$ be a locally wedge-shaped Riemannian manifold (with induced metric) satisfying (\ref{dag}) or (\ref{ddag}). There is an $r_0>0$ such that the following holds. 
    Suppose $p\in\bd^EM$, $r\in (0,r_0)$, and  $\Sigma \subset M \cap \mB^L_r(p)$ is an almost properly embedded, $C^{1,1}$-to-edge locally wedge-shaped, stable FBMH so that ${\rm Area}(\Sigma\cap \mB^L_r(p)) \leq C_1$. 
    Then 
    $$ \sup_{x\in \Sigma\cap\mB_{r/2}^L(p)} |A_\Sigma|(x) \leq C_2, $$
    where $C_2=C_2(C_1,M)>0$. 
 \end{theorem}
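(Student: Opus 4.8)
The plan is to argue by contradiction via a blow-up (point-selection) argument, using Theorem~\ref{theorem:main1} to extract a limit and Theorem~\ref{theorem:main2} (supplemented, in the non-edge cases, by the classical Schoen--Simon--Yau Bernstein theorem \cite{schoen1975curvature}) to force that limit to be flat, contradicting that its curvature equals $1$ at a marked point. So suppose the estimate fails. Then there exist points $p_i\in\bd^E M$, radii $r_i\in(0,r_0)$, and almost properly embedded, $C^{1,1}$-to-edge, stable FBMHs $\Sigma_i\subset M\cap\mB^L_{r_i}(p_i)$ with ${\rm Area}(\Sigma_i)\le C_1$ but $\sup_{\Sigma_i\cap\mB^L_{r_i/2}(p_i)}|A_{\Sigma_i}|\to\infty$. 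Passing to a subsequence (and using compactness of the relevant part of $M$) I may assume $p_i\to p_\infty\in\bd^E M$, and after a harmless preliminary normalization that the $r_i$ are bounded below, so that a standard point-picking lemma produces points $y_i\in\Sigma_i$ with $\mu_i:=|A_{\Sigma_i}|(y_i)\to\infty$ and radii $\sigma_i\to0$, $\mu_i\sigma_i\to\infty$, such that
\[
\sup_{\Sigma_i\cap\mB^L_{\sigma_i}(y_i)}|A_{\Sigma_i}|\le 2\mu_i .
\]

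\textbf{Blow-up and identification of the model.} Next I would translate $y_i$ to the origin and dilate by $\mu_i$, setting $\hat M_i:=\mu_i(M-y_i)$ and $\hat\Sigma_i:=\mu_i(\Sigma_i-y_i)$. Then $\hat\Sigma_i$ is an almost properly embedded stable FBMH in $\hat M_i$ with $|A_{\hat\Sigma_i}|(0)=1$ and $|A_{\hat\Sigma_i}|\le2$ on $\hat\Sigma_i\cap\mB^L_{\mu_i\sigma_i}(0)$, where $\mu_i\sigma_i\to\infty$. Since $M$ carries the metric induced from $\R^L$ and $r_0$ is small, the dilated ambient manifolds $\hat M_i$ converge, locally smoothly on compact sets, to one of three flat models, distinguished by the behaviour of the rescaled distances $\mu_i\,\dist(y_i,\bd^F M)$ and $\mu_i\,\dist(y_i,\bd^E M)$: an $(n+1)$-plane $\R^{n+1}$ (if $\mu_i\,\dist(y_i,\bd^F M)\to\infty$); a half-space $\R^{n+1}_+$ (if $\mu_i\,\dist(y_i,\bd^F M)$ stays bounded but $\mu_i\,\dist(y_i,\bd^E M)\to\infty$); or a Euclidean wedge $\Omega_\infty$ (if $\mu_i\,\dist(y_i,\bd^E M)$ stays bounded). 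In the last case the second fundamental forms of the two faces rescale to $0$, so $\Omega_\infty$ is genuinely a Euclidean wedge, and its angle equals $\lim_i\theta(q_i)$ for nearest edge points $q_i\to q_\infty$ in the closure of the edge component through $p_\infty$; it is precisely here that hypothesis (\ref{dag}) or (\ref{ddag}) enters, guaranteeing that this limiting angle is $\le\pi/2$.

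\textbf{Passing to the limit.} With the uniform bound $|A_{\hat\Sigma_i}|\le2$ on an exhaustion of the model by balls, I would apply Theorem~\ref{theorem:main1} on each fixed ball, together with the up-to-edge elliptic estimates of \cite[Appendix C]{mazurowski2023min} (available because (\ref{dag}) or (\ref{ddag}) holds): after a further subsequence, $\hat\Sigma_i$ converges locally uniformly, $C^\infty$ away from the edge and $C^{2,\alpha}$-to-edge, to an almost properly embedded $C^{1,1}$-to-edge locally wedge-shaped hypersurface $\hat\Sigma_\infty$ in the model, minimal with free boundary. The $C^{2,\alpha}$-to-edge convergence gives $|A_{\hat\Sigma_\infty}|(0)=\lim_i|A_{\hat\Sigma_i}|(0)=1$, so $\hat\Sigma_\infty\neq\emptyset$; stability passes to the limit since the stability inequality is preserved under this convergence; and ${\rm Area}(\Sigma_i)\le C_1$ together with the monotonicity formula yields a uniform Euclidean bound on density ratios which survives the rescaling, so $\hat\Sigma_\infty$ has Euclidean volume growth. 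Finally I would invoke the appropriate Bernstein statement: in the wedge case $\hat\Sigma_\infty$ is a $2$-sided $C^{1,1}$-to-edge almost properly embedded stable FBMH of Euclidean volume growth in a Euclidean wedge of angle $\le\pi/2$, so Theorem~\ref{theorem:main2} forces $\hat\Sigma_\infty=\Omega_\infty\cap P$ for a hyperplane $P$; in the half-space case, reflecting across the bounding hyperplane produces a $2$-sided stable minimal hypersurface of Euclidean volume growth in $\R^{n+1}$, hence a hyperplane by \cite{schoen1975curvature}; and in the interior case \cite{schoen1975curvature} applies directly. In every case $|A_{\hat\Sigma_\infty}|\equiv0$, contradicting $|A_{\hat\Sigma_\infty}|(0)=1$.

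\textbf{Main obstacle.} The genuinely delicate points are the last two. First, one must ensure the blow-up limit really satisfies the hypotheses of Theorem~\ref{theorem:main2}: that the angle of $\Omega_\infty$ is $\le\pi/2$ (so the conclusion that does \emph{not} require proper embeddedness applies, since the blow-up is only almost properly embedded), and that $|A|$ passes to the value $1$ at the origin even when the origin lands on the edge, which forces $C^{2,\alpha}$-up-to-the-edge control on the $\hat\Sigma_i$ and hence uses the regularity hypotheses (\ref{dag})/(\ref{ddag}) and the elliptic estimates of \cite{mazurowski2023min}. Second, one must carry out the point-selection and the ambient rescaling so that they interact correctly with both boundary strata $\bd^F M$ and $\bd^E M$, which is what produces the trichotomy of flat models; I expect this bookkeeping, rather than any single hard estimate, to be where the care is needed.
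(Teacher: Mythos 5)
Your proposal is correct and follows essentially the same route as the paper: contradiction, point-picking, blow-up to one of three flat models (plane, half-space, Euclidean wedge) according to the behaviour of the rescaled distances to $\bd^F M$ and $\bd^E M$, Euclidean area growth via the monotonicity formula, the compactness theorem, and then either the classical Schoen--Simon--Yau Bernstein theorem (with reflection in the half-space case) or Theorem \ref{theorem:main2} in the wedge case, with (\ref{dag})/(\ref{ddag}) used exactly as you say, both to force the limiting wedge angle to be at most $\pi/2$ and to upgrade the convergence to $C^{2}$ near the edge so that $|A|(0)=1$ survives. The one step you defer as ``bookkeeping'' --- chaining the interior, face, and edge monotonicity formulas across the intermediate scales $\dist(y_i,\bd^{\pm}M)$ and $\dist(y_i,\bd^E M)$ to obtain the uniform Euclidean area growth --- is in fact where most of the paper's computation lives, but your outline of it is accurate.
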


\begin{remark} 
    By \cite[Theorem 2.17]{mazurowski2023min} and (\ref{dag})(\ref{ddag}), the a priori $C^{1,1}$ regularity can be weaken to $C^{1,\alpha}$. 
    We also note that, in the area minimizing case, Li \cite{li2019dihedral} proved curvature estimates in more general Riemannian polyhedra. Our result can be seen as answering, in a certain sense, \cite[Remark 3.9]{li2019dihedral} about curvature estimates in the stable case, at least for locally wedge-shaped domains. The key point is that in an acute wedge domain $\Omega = W\times \R$ where $W = \{(r,\theta): r \ge 0,\, \theta \in (-\theta_0/2, \theta_0/2)\}$, the potentially troublesome vertical plane $\Sigma = \{\theta = \theta_1\}$ does not qualify as stationary according to our definitions.
\end{remark}

\subsection{Organization} The remainder of the paper is organized as follows. In Section \ref{Sec: preliminary}, we formally define locally wedge-shaped manifolds and locally wedge-shaped submanifolds. In Section \ref{Sec: variations in wedge manifold}, we discuss the first and second variation of area for locally wedge-shaped submanifolds and define locally wedge-shaped free boundary minimal hypersurfaces. In Section \ref{sec:compactness}, we prove Theorem \ref{theorem:main1} on the compactness of locally wedge-shaped FBMHs with controlled area and curvature. In Section \ref{sec:Bernstein} we prove the {Bernstein-type} result (Theorem \ref{theorem:main2}). Finally in Section \ref{Sec: curvature estimates}, we prove Theorem \ref{theorem:main3} regarding the curvature estimates for stable locally wedge-shaped FBMHs. 
    
\begin{acknowledgements}
	The authors would like to thank Professor Xin Zhou for suggesting the problem and for many helpful discussions. The second author also would like to thank Professor Gang Tian for his constant encouragement, and thank Professor Xin Zhou and Cornell University for their hospitality. 
	The second author is partially supported by China Postdoctoral Science Foundation 2022M722844.
\end{acknowledgements}
	

\section{Preliminary}\label{Sec: preliminary}

In this section, we introduce the definition of {\em locally wedge-shaped manifolds} and {\em almost properly embedded locally wedge-shaped hypersurfaces}.  
To begin with, let us fix some notation in the Euclidean space, which is partially borrowed from \cite{edelen2022regularity}. 

\medskip
\subsection{Wedge domains}
Let $H^m_{+}$ and $H^m_{-}$ be two closed half spaces in $\R^{m}$, where $m\in\{2,3,\dots\}$. 
Then we say $$\Omega^m = H^m_{+}\cap H^m_-$$ is an $m$-dimensional {\em wedge domain} if it has non-empty interior. 
By rotating, we can always write $\Omega^m$ in the form of 
\begin{align}\label{Eq: standard wedge}
	\Omega^m &= \Omega_\theta^2 \times \R^{m-2} 
	\\ &= \Clos\big(\big\{(x_1,\dots,x_m)\in \R^m: x_1> 0, x_2\in \left(\tan\left(-\theta/2\right) x_1, ~\tan\left(\theta/2\right) x_1\right) \big\} \big) , \nonumber
\end{align}
where $\theta\in (0,\pi]$ is called the {\em wedge angle} of $\Omega^m$. 
We also use the notation $\Omega^m_\theta$ to emphasize its angle. 
In particular, if $\theta = \pi$, then $\Omega^m_\pi = \{(x_1,\dots,x_m)\in \R^m: x_1\geq 0\}$ is a half space, and we call $\Omega^m_\pi$ a {\em trivial} wedge domain. 

For a non-trivial wedge domain $\Omega^m_\theta$, i.e. $0<\theta<\pi$, we define the following {\em vertical} half hyperplanes in $\Om^m_\theta$: 
\begin{eqnarray}\label{Eq: vertical hyperplane in wedge}
	P_\alpha^{m-1} &:=& \left\{(x_1,x_2,\dots,x_m)\in \R^m: x_1\geq 0, x_2=\tan\left(\alpha\right)x_1 \right\},
	\\ \partial^{\pm} \Omega^m_\theta &=& P^{m-1}_{\pm\frac{\theta}{2}},
\end{eqnarray}
where $ \alpha\in [-\theta/2, \theta/2] $. 
Without loss of generality, we assume $\partial^{\pm}\Omega^m \subset \partial H^m_{\pm}  $. 
Moreover, we define the {\em horizontal} hyperplanes of $\Omega^m=\Omega^2_\theta\times \R^{n-2}$ by 
\begin{equation}\label{Eq: horizontal plane in domain}
	\mathcal{P}_{\Omega^m} := \{\R^2\times W^{m-3}: \Omega_\theta^2\subset \R^2, W^{m-3} \mbox{ is an $(m-3)$-subspace of }\R^{m-2} \}.
\end{equation}

\begin{definition}[Stratification]\label{Def: stratification of wedge}
	For an $m$-dimensional wedge domain $\Omega = \Omega^m_\theta:=\Omega_\theta^2 \times \R^{m-2} $ with $\theta\in (0,\pi]$, define the stratification $\partial_{m-2}\Omega \subset \partial_{m-1}\Omega \subset \partial_m\Omega$ of $\Omega$ by
	$$ \partial_m\Omega := \Omega, 
	\quad \partial_{m-1}\Omega := \partial \Omega, 
	\quad \partial_{m-2}\Omega:= \left\{ \begin{array}{ll}
		 \{0\}\times \R^{m-2}, ~&\theta \in (0,\pi),
		 \\ \emptyset ,~ &\theta=\pi.
	\end{array} \right.$$
	Then we define
	\begin{itemize}
		\item $\interior(\Omega) := \partial_m\Omega\setminus\partial_{m-1}\Omega$ to be the {\em interior} of $\Omega$;
		\item $\partial^F\Omega := \partial_{m-1}\Omega\setminus\partial_{m-2}\Omega$ to be the {\em face} of $\Omega$;
		\item $\partial^E\Omega := \partial_{m-2}\Omega$ to be the {\em edge} of $\Omega$. 
	\end{itemize}
\end{definition}

Note that the edge $\partial^E\Omega^m_\theta$ is empty if and only if $\Omega^m_\theta$ is a trivial wedge domain, i.e. $\theta=\pi$. 
Additionally, the tangent space $T_x\Omega^m$ of $\Omega^m$ at $x$ is given by 
$$ T_x\Omega^m = \left\{ \begin{array}{ll}
		 	\R^m, ~&x \in \interior(\Omega^m) ,
		 \\ H^m_{\pm} ,~ &x \in \partial^F \Omega^m \cap H^m_{\pm},
		 \\ \Omega^m, ~& x\in \partial^E\Omega^m.
	\end{array} \right. $$

\subsection{Locally wedge-shaped manifolds}

In this subsection, we introduce the notations for manifolds that are locally modeled by wedge domains. 
For simplicity, we denote by $\mB^L_r(p)$ the $L$-dimensional Euclidean open ball, and denote by $0^m$ the origin in $\R^m$. 

\begin{definition}[Locally wedge-shaped manifolds]\label{Def: wedge manifold}
	Let $m,L\in \{2,3,\dots\}$, and $M^m\subset \R^L$ be a (compact) $m$-dimensional manifold with (possibly empty) boundary $\partial M$. 
	Then we say $M$ is a {\em locally wedge-shaped $m$-manifold} if for any $p\in M$, there exist $R=R(p)>0$ and a diffeomorphism $\phi = \phi_p: \mB^L_{R}(0)\to \mB^L_{R}(p)$ so that 
	\begin{itemize}
		\item[(i)] $\phi(0) = p$ and the tangent map $(D\phi)_0\in O(L)$ is an orthogonal transformation;
		\item[(ii)] $\phi\big((\Omega \times 0^{L-m} ) \cap \mB^L_R(0)\big) = M\cap \mB^L_R(p)$, where
			$$ \Omega = \Omega(p) = \left\{ \begin{array}{ll}
		 		\R^m, ~&p\in \interior(M) ,
			 	\\ \mbox{an $m$-dimensional wedge domain } \Omega^m_\theta(p) ,~ &p\in \partial M,
				\end{array} \right. $$
			for some $\theta = \theta(p)\in (0, \pi]$. 
	\end{itemize}
	We call $(\phi, \mB_R^L(p), \Omega)$ a {\em local model} of $M$ around $p$, and call $\theta$ the {\em wedge angle} of $M$ at $p\in \partial M$. 
    {
    Additionally, given $l\in\mZ_+$ and $\alpha\in (0,1]$, if for any $p\in\bd M$ with $0<\theta(p)<\pi$, $\phi_p$ is globally a $C^{l,\alpha}$-diffeomorphism, and is a $C^\infty$-diffeomorphism in $ (\R^{m}\times\{0^{L-m}\})\setminus \bd^E\Omega(p)$, then we say $M$ is a {\em $C^{l,\alpha}$-to-edge} locally wedge-shaped $m$-manifold.}
\end{definition}

\begin{remark}\label{Rem: intrinsic wedge angle}
		In general, we equip $M$ with a Riemannian metric $g_{_M}$ induced from the Euclidean metric $g_0$ in $\R^L$. 
		Then by (i) in the above definition, $\theta=\theta(p)$ is also the {\em intrinsic} wedge angle of $M$ at $p\in\partial M$, which does not depend on the choice of the local model. 
\end{remark}

For a compact ($C^{l,\alpha}$-to-edge) locally wedge-shaped manifold $M^m\subset \R^L$, we can have the following observations from Definition \ref{Def: wedge manifold}. 
Firstly, we see $\partial M$ is smooth at $p$ (in the usual sense) if and only if $\theta(p)=\pi$. 
Then we can claim that 
\begin{equation}\label{Eq: edge is closed manifold}
	\{p\in\partial M: \theta(p)\neq \pi\} \mbox{ is a closed $C^{l,\alpha}$ embedded $(m-2)$-submanifold,}
\end{equation}
and thus $\mH^{m-1}(\{p\in\partial M: \theta(p)\neq \pi\}) = 0$. 
Indeed, take any $p\in\partial M$ with a local model $(\phi, \mB_R^L(p), \Omega)$. 
If $\theta(p)=\pi$, then by definition, $\partial M \cap \mB^L_R(p)$ is  diffeomorphic to an $(m-1)$-plane $\partial \Omega^m_\pi$, which implies the smoothness of $\partial M \cap \mB^L_R(p)$. 
Hence, $\theta(q)=\pi$ for all $q\in \partial M \cap \mB^L_R(p)$, and thus $\{p\in\partial M: \theta(p) = \pi\}$ is an open subset of $\partial M$. 
On the other hand, if $\theta(p)<\pi$, then we have $\theta(q)<\pi$ for $q\in\phi(\bd^E\Omega\cap\mB^L_R(0))$ and $\theta(q)=\pi$ for $q\in\phi(\bd^F\Omega\cap\mB^L_R(0))$ since a non-trivial wedge domain can not be $C^1$-diffeomorphic to a half-space. 
Therefore, $\{p\in\partial M: \theta(p)\neq \pi\}$ is compact and locally $C^{l,\alpha}$ diffeomorphic to $\partial_{m-2}\Omega\cong \R^{m-2}$, which gives the claim.  

Additionally, the local model $(\phi, \mB_R^L(p), \Omega)$ of $M^m$ around $p\in\partial M$ also gives us a local extension of $M$. 
To be exact, define a $C^{l,\alpha}$ embedded $m$-manifold $\tM_{p,R} \subset\mB^L_R(p)$ by
\begin{equation}\label{Eq: local extension}
	\tM_{p,R} := \phi\big((\R^m \times 0^{L-m} ) \cap \mB^L_R(0)\big), 
\end{equation}
where $\R^m\supsetneq \Omega$. 
Then we see $M_{p,R} := M \cap \mB^L_R(p)$ is a wedge-shaped domain in $\tM_{p,R}$. 
Moreover, we define the following notations for simplicity: 
\begin{itemize}
	\item $\partial^{\pm} M_{p,r} := \phi\big((\partial^{\pm}\Omega^m_\theta(p) \times 0^{L-m} ) \cap \mB^L_R(0)\big)$;
	\item $\tM_{p,r}^\pm := \phi\big((H^m_{\pm} \times 0^{L-m} ) \cap \mB^L_R(0)\big)$;
	\item $\partial \tM_{p,r}^\pm := \phi\big((\partial H^m_{\pm} \times 0^{L-m} ) \cap \mB^L_R(0)\big)$.
\end{itemize}
Note $\tM^{\pm}_{p,r}$ is an extension of $M_{p,r}$ across $\bd^{\mp}M_{p,r}$, and $\partial \tM_{p,r}^\pm$ is an extension of $\partial^{\pm} M_{p,r}$. See Figure \ref{fig:local-extensions}.

\begin{figure}[h]
\centering
\begin{subfigure}{0.4\linewidth}
\centering
\includegraphics[height=1.75in]{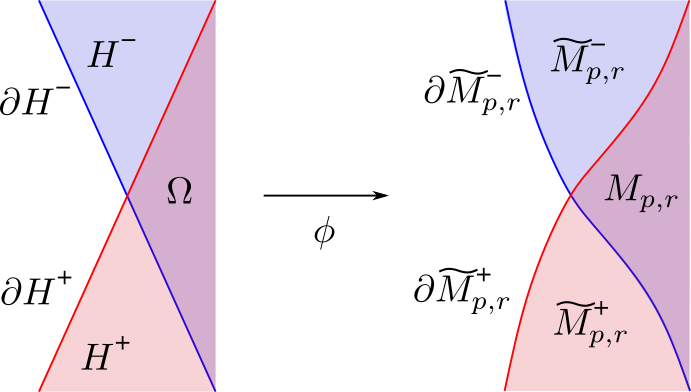} 
\end{subfigure}
\hspace{1cm}
\begin{subfigure}{0.5\linewidth}
\raggedleft
\includegraphics[height=1.75in]{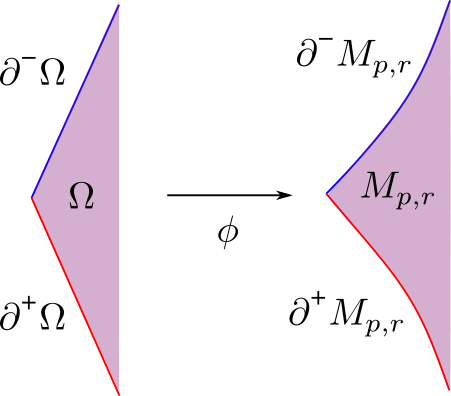}
\end{subfigure}
\caption{Local Extensions of $M$}
\label{fig:local-extensions}
\end{figure}

Similar to Definition \ref{Def: stratification of wedge}, a locally wedge-shaped manifold can be stratified using its local models. 
\begin{definition}\label{Def: stratification of wedge manifold}
	Let $M^m\subset \R^L$ be a compact locally wedge-shaped $m$-manifold. 
	Define the stratification $\partial_{m-2}M\subset \partial_{m-1}M\subset \partial_mM$ of $M$ by
	$$ \partial_m M := M, \quad \partial_{m-1}M := \partial M, \quad \partial_{m-2}M := \cup_{p\in\partial M} \phi_p\big((\partial_{m-2}\Omega^m_{\theta}(p)\times 0^{L-m}) \cap \mB^L_R(0)\big),$$
	where $(\phi_p, \mB^L_R(p), \Omega^m_\theta(p))$ is the local model of $M$ around $p$. 
	Moreover, define
	\begin{itemize}
		\item $\interior(M) := \partial_m M\setminus\partial_{m-1}M$ to be the {\em interior} of $M$;
		\item $\partial^FM := \partial_{m-1}M \setminus\partial_{m-2}M$ to be the {\em face} of $M$;
		\item $\partial^EM := \partial_{m-2}M$ to be the {\em edge} of $M$. 
	\end{itemize}
\end{definition}

Note $\partial_{m-2}M^m = \partial^EM = \{p\in\partial M : \mbox{the wedge angle } \theta(p)\in (0,\pi) \}$ is a closed  $C^{l,\alpha}$ embedded $(m-2)$-submanifold by definitions and (\ref{Eq: edge is closed manifold}). 
Hence, the term `$C^{l,\alpha}$-to-edge' in Definition \ref{Def: wedge manifold} simply means $M$ is $C^{l,\alpha}$ near the edge and $C^\infty$ away from the edge.

Additionally, for $p\in\partial^E M^m$ with local model $(\phi, \mB^L_R(p), \Omega)$, we have $T_pM = (D\phi)_0(T_0\Omega\times 0^{L-m})$. 
Then we further define the {\em vertical} half-hyperplanes in $T_pM$ by 
\begin{equation}\label{Eq: vertical hyperplane in manifold}
	P_\alpha(p) := (D\phi)_0(P_\alpha^{m-1}\times 0^{L-m}), \quad \mbox{ for $\alpha\in [-\theta/2,\theta/2]$ and $P_\alpha^{m-1}$ given by (\ref{Eq: vertical hyperplane in wedge})}, 
\end{equation}
and define the set of {\em horizontal} hyperplanes of $T_pM$ (in $T_p\tM_{p,R}$) by 
\begin{equation}\label{Eq: horizontal hyperplane in manifold}
    \Pcal_{T_pM} := \{ (D\phi)_0(P\times 0^{L-m}) : P\in \Pcal_{T_0\Omega} \}.
\end{equation}
For any $P\in\Pcal_{T_pM}$, we call $P\cap T_pM$ a {\em horizontal wedge domain} in $T_pM$.

\subsection{Almost properly embedded locally wedge-shaped hypersurfaces}

Using the local model structure, we can define submanifolds with a certain regularity in locally wedge-shaped manifolds. 
\begin{definition}[Locally wedge-shaped submanifolds]\label{Def: locally wedge-shaped hypersurfaces}
	Let $N^n$ and $M^{n+k}$ be two locally wedge-shaped manifolds in $\R^L$ with $k\geq 0$. 
	Then we say $N^n$ is an embedded {\em locally wedge-shaped $n$-submanifold} of $M^{n+k}$, if 
	\begin{itemize}
		\item[(i)] $N\subset M$;
		\item[(ii)]for any $p\in N$, there exist local models $(\psi,\mB^L_R(p), \Omega_N)$ and $(\phi, \mB^L_R(p), \Omega_M)$ of $N$ and $M$ around $p$ respectively so that the associated local extension (given by (\ref{Eq: local extension})) $\widetilde{N}_{p,R}$ is embedded in $\tM_{p,R}$. 
	\end{itemize}
    Additionally, for $l\geq1$ and $\alpha\in (0,1]$, if $N$ is $C^{l,\alpha}$-to-edge, $\widetilde{N}_{p,R}$ is $C^{l,\alpha}$-embedded in $\tM_{p,R}$ and the embedding is $C^\infty$ away from $\bd^E N$, then we say $N$ is an {\em  $C^{l,\alpha}$-to-edge} embedded locally wedge-shaped $n$-submanifold.

    In particular, if $k=0$, then we say $N$ is a {\em locally wedge-shaped domain} of $M$; if $k=1$, then we say $N$ is a {\em ($C^{l,\alpha}$-to-edge) locally wedge-shaped hypersurface} of $M$.  
\end{definition}

Unless otherwise stated, the ambient manifold $M$ is always assumed to be $C^\infty$-to-edge, while the submanifold is $C^{1,1}$-to-edge. This is because the $C^{1,1}$-regularity is a necessary condition to get bounded curvature, and the optimal elliptic regularity in a general wedge domain is only $C^{1,\alpha}$ (cf. \cite{lieberman1989optimal}).

Using the stratification of locally wedge-shaped manifolds, we now introduce the definition of almost properly embedding, which is generalized from \cite[Definition 2.6]{li2021min}. 

\begin{definition}[Almost properly embedding]\label{Def: almost properly embedded}
	Let $M^{n+1}\subset \R^L$ be a locally wedge-shaped $(n+1)$-manifold, and $\Sigma^n\subset M$ be a locally wedge-shaped hypersurface in $M$. 
	We say $\Sigma$ is {\em almost properly embedded} in $M$, denoted by $(\Sigma,\{\partial_m\Sigma\})\subset (M, \{\partial_{m+1} M\})$, if 
	$$ \partial_m\Sigma \subset \partial_{m+1}M,\qquad \forall m\in\{n-2, n-1, n\}. $$
	In particular, if $\partial^F\Sigma = \partial^F M \cap \Sigma$ and $\partial^E\Sigma = \partial^E M\cap\Sigma$, then we say $(\Sigma,\{\partial_m\Sigma\})\subset (M, \{\partial_{m+1} M\})$ is {\em properly embedded}. 
\end{definition}

One can easily verify that a compact manifold with smooth boundary is a locally wedge-shaped manifold (modeled by trivial wedge domains), and thus the above definition is a generalization of \cite[Definition 2.6]{li2021min}. 
Additionally, we make the following remark:

\begin{remark}
	In \cite[Section 9]{edelen2022regularity}, Edelen-Li used $C^{1,\alpha}$-graphs over some horizontal hyperplane to define the regular set of a $1$-codimension integral current, which implies the $C^{1,\alpha}$-to-edge properly embeddedness in the sense of Definition \ref{Def: almost properly embedded}. 
\end{remark}

Although the free boundary area minimizers are proved to be {\em properly} embedded in \cite{edelen2022regularity}, we generally can not expect such properness to hold after taking limits due to the {\em touching phenomenon}. 
Namely, for $(\Sigma,\{\partial_m\Sigma\})\subset (M, \{\partial_{m+1} M\})$, $\interior(\Sigma)$ can touch $\bd^F M$ tangentially, and $\bd^F\Sigma$ can touch $\bd^E M$ tangentially. 
Therefore, to show a curvature estimate and the compactness theorem, we need to treat the touching set carefully. 

Specifically, consider an almost properly embedded locally wedge-shaped hypersurface $(\Sigma,\{\partial_m\Sigma\})\subset (M, \{\partial_{m+1} M\})$. 
It is simple to check that $T_p\Sigma$ is almost properly embedded in $T_pM$. 
Hence, we have 
\[\interior(\Sigma)\subset \interior(M)\cup \partial^FM \qquad {\rm and}\qquad \partial^F\Sigma\subset  \partial^F M\cup  \partial^EM. \]
We call 
\begin{itemize}
    \item $p\in \interior(\Sigma)\cap\bd^F M$ a {\em false boundary point} of $\Sigma$; and
    \item $p\in\bd^{F}\Sigma\cap\bd^E M$ a {\em false edge point} of $\Sigma$,
\end{itemize}
which together form the {\em touching set} of $\Sigma$. 

For simplicity, denote by 
\begin{equation}\label{Eq: boundary points on F&E}
	\partial^{FF}\Sigma := \partial^F\Sigma \cap \partial^FM \qquad{\rm and}\qquad \partial^{FE}\Sigma := \partial^F\Sigma\cap\partial^EM,
\end{equation}
the face of $\Sigma$ on $\partial^FM$ and $\partial^EM$ respectively. 
Then for any $p\in\Sigma\cap\partial^EM=\partial^E\Sigma\cup\partial^{FE}\Sigma$, 
\begin{eqnarray}\label{Eq: classification tangent cone}
	~~\qquad p\in \partial^E\Sigma &\Leftrightarrow & (T_p\Sigma,\{\partial_iT_p\Sigma\})\subset (T_pM,\{\partial_{i+1}T_pM\}) \mbox{ is a non-trivial wedge domain}; \nonumber
	\\ ~~\qquad p\in \partial^{FE}\Sigma &\Leftrightarrow & T_p\Sigma = P_\alpha(p) \mbox{ for some $\alpha\in [-\theta/2, \theta/2]$},
\end{eqnarray}
where $\theta=\theta(p)$ is the wedge angle of $M$ at $p$, and $P_\alpha(p)$ is given by (\ref{Eq: vertical hyperplane in manifold}).

\section{Variations in locally wedge-shaped manifolds}\label{Sec: variations in wedge manifold}

In the rest of this paper, we always denote by $(M^{n+1} , g_{_M})$ a ($C^\infty$-to-edge) locally wedge-shaped Riemannian $(n+1)$-manifold in some $\R^L$ with the induced metric. 

We say $X$ is a smooth vector field on $M^{n+1}$ if for any $p\in M$ there is a local model $(\phi, \mB^L_R(p), \Omega)$ and a smooth vector field $\widetilde{X} \in \mathfrak{X}\big((\R^{n+1}\times 0^{L-(n+1)}) \cap \mB^L_R(0)\big)$ so that 
\begin{itemize}
	\item $\widetilde{X}(x)\in T_x\Omega$ for all $x\in \Omega$,
	\item $X\llcorner (M\cap \mB^L_R(p)) = D\phi(\widetilde{X})\llcorner  (M\cap \mB^L_R(p))  $. 
\end{itemize}
Denote by $\mathfrak{X} (M)$ the set of smooth vector fields on $M$. 

Note $X\in \mathfrak{X} (M)$ if and only if for all $p\in M$, $X(p)\in T_pM$ and $X$ locally admits a smooth extension in $\widetilde{M}_{p,R}$. 
We define 
$$\mathfrak{X}_{\text{tan}} (M) :=\{ X\in \mathfrak{X}(M) : X(q) \in T_q\partial_i M, ~\forall p\in \partial_iM, ~i\in\{n-1,n,n+1\} \} $$
to be the set of vector fields $X\in \mathfrak X(M)$ whose flow preserves the stratification of $M$. 

\subsection{First variation for varifolds} 
Recall that the first variation of area for a varifold $V$ along the flow of a vector field $X$ is given by 
\begin{equation}\label{Eq: 1st variation for varifolds}
    \delta V(X) = \int \Div_{S} X(p) \, dV(p,S). 
\end{equation}

\begin{definition}\label{Def: stationary varifold}
    A varifold $V\in \mathcal V_n(M)$ is called {\it stationary with free boundary} if $\delta V(X) = 0$ for all $X\in \mathfrak X_{\text{tan}}(M)$. More generally, if $U$ is a relatively open subset of $M$ then $V$ is called {\it stationary with free boundary in} $U$ if $\delta V(X) = 0$ for all $X\in \mathfrak X_{\text{tan}}(M)$ with compact support in $U$. 
\end{definition}

Next, we record the monotonicity formula for stationary varifolds with free boundary in a locally wedge-shaped manifold $(M,\{\bd_{m+1} M\})$. 

\begin{proposition}[Monotonicity Formula]\label{Prop: monotonicity fomula} Let $(M,\{\bd_{m+1}M\})$ be a locally wedge-shaped manifold, and assume that $V\in \mathcal{V}_n(M)$ is stationary with free boundary in $M$. Then there are constants $C_M > 0$ and $r_0 > 0$, depending only on $M$, such that for all $p\in \bd^E M$ and all $0<s<t<r_0$ we have 
\[
    \frac{\|V\|(\mB^L_s(p))}{s^n} \le C_M\frac{\|V\|(\mB^L_t(p))}{t^n}.  
\]
\end{proposition}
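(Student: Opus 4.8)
The plan is to follow the classical Simon-type monotonicity argument for stationary varifolds, but carried out using vector fields adapted to the wedge stratification and an almost-radial test vector field near an edge point $p \in \bd^E M$. First I would fix a local model $(\phi, \mB^L_R(p), \Omega)$ around $p$ with $\Omega = \Omega^{n+1}_\theta \times 0^{L-(n+1)}$ and $(D\phi)_0 \in O(L)$, and work in the local extension $\tM_{p,R}$. Because $(D\phi)_0$ is orthogonal, the Euclidean distance function $|x - p|$ pulls back to a function on $\tM_{p,R}$ that agrees with the intrinsic distance to first order at $p$; more precisely, writing $r = r(x) = \operatorname{dist}_{g_M}(\cdot, p)$ or the pulled-back Euclidean radius, we have the standard estimates $|\nabla^M r| = 1 + O(r)$ and $\nabla^M r = \partial_r + O(r)$ in the local model, with the $O(r)$ errors controlled by the $C^2$ (or $C^{1,1}$) norm of $\phi$ and hence by $M$ alone.

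Next I would build the test vector field. For $0 < s < t < r_0$ choose a smooth nonincreasing cutoff $\gamma : [0,\infty) \to [0,1]$ with $\gamma \equiv 1$ on $[0, s]$ and $\gamma \equiv 0$ on $[t, \infty)$, and set $X = \gamma(r)\, r\, \nabla^M r$ (using a smoothed radius to handle the edge). The key structural point is that $X \in \mathfrak X_{\mathrm{tan}}(M)$: at a face point the radial field pulled back through $\phi$ is tangent to the face $\bd^F M$ (since the model radial field is tangent to $\bd^\pm \Omega$, as these are half-hyperplanes through the origin), and at an edge point it is tangent to $\bd^E M$ (the model radial field is tangent to $\{0\} \times \R^{m-2}$... in fact it points radially within $\Omega^2_\theta$, so it is tangent to all strata containing $p$). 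Thus $X$ is admissible in Definition \ref{Def: stationary varifold} and $\delta V(X) = 0$. Computing $\Div_S X$ for an $n$-plane $S$ gives, as in Simon's lemma, $\Div_S X = \gamma(r)\big(n + r\gamma(r)^{-1}\gamma'(r)\,|\nabla^M r|^2\big) + (\text{terms from } |\nabla^{S^\perp} r|^2 \ge 0) + O(r)\gamma(r)$, and plugging into $0 = \delta V(X) = \int \Div_S X \, dV$ yields, after the usual manipulation with $I(\rho) = \|V\|(\mB_\rho)/\rho^n$ (or the intrinsic-ball analogue), a differential inequality of the form $\frac{d}{d\rho}\big(e^{C_M \rho} I(\rho)\big) \ge 0$ for $\rho \in (0, r_0)$, where the $e^{C_M \rho}$ factor absorbs the $O(r)$ metric errors. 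Integrating from $s$ to $t$ and comparing Euclidean versus intrinsic balls (another $O(r)$ comparison) gives the stated inequality $\|V\|(\mB^L_s(p))/s^n \le C_M \|V\|(\mB^L_t(p))/t^n$ after adjusting the constant.

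I expect the main obstacle to be verifying rigorously that the radial vector field is genuinely admissible — i.e. lies in $\mathfrak X_{\mathrm{tan}}(M)$ and admits the required smooth extension to $\tM_{p,R}$ — together with handling the limited regularity at the edge. The pulled-back radial field $D\phi(\tilde X)$ is only $C^{1,1}$-to-edge a priori (since $\phi$ is only $C^\infty$ away from $\bd^E \Omega$), so one must either smooth the radius $r$ away from the edge while keeping tangency to $\bd^E M$ exactly, or check that the monotonicity computation goes through with a Lipschitz vector field — the latter is standard for stationary varifolds since $\Div_S X \in L^\infty$ suffices. A secondary technical point is the $O(r)$ error bookkeeping: one needs that all error terms are bounded by $C_M \cdot r$ uniformly over $p \in \bd^E M$, which follows from compactness of $M$ and the uniform $C^2$ control on the local models, but should be stated carefully. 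Once admissibility is settled, the remainder is the textbook monotonicity computation.
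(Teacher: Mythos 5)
Your proposal is correct and follows essentially the same route as the paper: the paper's proof takes the pushforward $Y(p) = D\phi\bigl(X(\phi^{-1}(p))\bigr)$ of the model position field $X(x)=x$ (which is tangent to all strata of the wedge since they are cones through the vertex, hence $(\varphi\circ r)\cdot Y \in \mathfrak X_{\text{tan}}(M)$), establishes the error bounds $|Y(p)-(p-p_0)| = O(|p-p_0|^2)$ and $|(DY)_p - \mathrm{Id}| = O(|p-p_0|)$ from the uniform $C^2$ control on the local models, and then runs the standard cutoff computation to get a differential inequality absorbed by an exponential factor. The only cosmetic difference is that your test field as first written, $\gamma(r)\,r\,\nabla^M r$, is not literally the admissible one (the tangential gradient of the ambient distance need not be tangent to the faces or edge); the correct choice is exactly the pushforward field you identify when discussing the ``main obstacle,'' and the paper simply uses the ambient Euclidean radius $r(p)=|p|$ for the cutoff and the balls, so no intrinsic-versus-Euclidean ball comparison is needed.
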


The monotonicity formula is well-known at interior and faces points of $M$. The proof when $p\in \bd^E M$ uses arguments similar to those in Edelen-Li \cite{edelen2022regularity}.  For the sake of completeness, we include a detailed proof in Appendix \ref{Sec: monotonicity formula}. 
Additionally, it should be noted that the monotonicity formula is also valid for stationary varifolds in arbitrary codimension with free boundary.

\subsection{First variation for locally wedge-shaped hypersurfaces} 
We now return to the setting of almost properly embedded locally wedge-shaped hypersurfaces. We would like to introduce a variational problem for which:
\begin{itemize}
    \item 
    FBMHs $(\Sigma, \{\bd_{m} \Sigma\})$ properly embedded in $(M, \{\bd_{m+1} M\})$ are solutions;
    \item hypersurfaces $(\Sigma, \{\bd_m \Sigma\})$ for which $\bd \Sigma\subset \bd^E M$ but which do not meet either face of $M$ orthogonally are not solutions;
    \item
    the space of solutions is closed. 
\end{itemize}
This requires carefully designing a space of vector fields that are admissible for use in the first variation formula. 


\begin{definition}
    \label{admissible-vf}
    Let $(\Sigma,\{\bd_m \Sigma\})\subset  (M,\{\bd_{m+1}M\})$ be an almost properly embedded locally wedge-shaped hypersurface . We let $\mathfrak X(M,\Sigma)$ denote the space of vector fields $X\in \mathfrak X(M)$ such that 
    \begin{itemize}
        \item[(i)] $X(p)\in T_pM$ for all $p\in M$;
        \item[(ii)] for all $p\in \bd^E \Sigma$ there exists an $r > 0$ such that $X(q)\in T_q \bd^E M$ for all $q\in \mB^L_r(p) \cap \bd^E M$;
        \item[(iii)] for all $p\in \clos(\bd^{FF} \Sigma)$ there exists an $r>0$ such that $X(q)\in T_q\bd M$ for all $q\in \mB^L_r(p)\cap \bd M$;
        \item[(iv)] for all $p\in \bd^F\Sigma \setminus \clos(\bd^{FF}\Sigma)$ there exists $r > 0$ such that $\bd^F \Sigma \cap \mB^L_r(p) \subset \bd^E M$ and if $T_p\Sigma = P_\alpha(p)$ (see (\ref{Eq: classification tangent cone})) then $X\llcorner (\mB^L_r(p) \cap M) = \widetilde{X}\llcorner M$ for some
        $$
        \widetilde{X} \in
        \begin{cases}
            \mathfrak X_{\text{tan}}(\tM_{p,r}^+) & \text{if } \alpha < 0,\\
            \mathfrak X_{\text{tan}}(\tM_{p,r}^-) & \text{if } \alpha > 0,\\
            \mathfrak X_{\text{tan}}(\tM_{p,r}^+) \cup \mathfrak X_{\text{tan}}(\tM_{p,r}^-) &\text{if } \alpha = 0.
        \end{cases}
        $$
    \end{itemize}
\end{definition}

We make some comments on the definition. Condition (i) is imposed to ensure that the flow of $X$ sends $M$ into $M$. Conditions (ii) and (iii) are designed to ensure that the flow preserves the almost proper embeddedness of $(\Sigma,\{\bd_m \Sigma\})$. Note that admissible vector fields $X$ are allowed to push interior points of $\Sigma$ lying on a face of $M$ into the interior of $M$. Condition (iv) is designed to likewise allow admissible vector fields to push face points of $\Sigma$ lying on the edge of $M$  into a face of $M$. Figure \ref{fig:admissible-variations} illustrates the type of variations allowed by condition (iv). The reader should imagine that both pictures extend out of the page so that the face of $\Sigma$ is contained in the edge of $M$. Note how the flow of $X$ pushes the face of $\Sigma$ off the edge of $M$ and into a face of $M$, with the choice of face depending on the angle $\alpha$.

\begin{figure}
    \centering
    \includegraphics[width=4in]{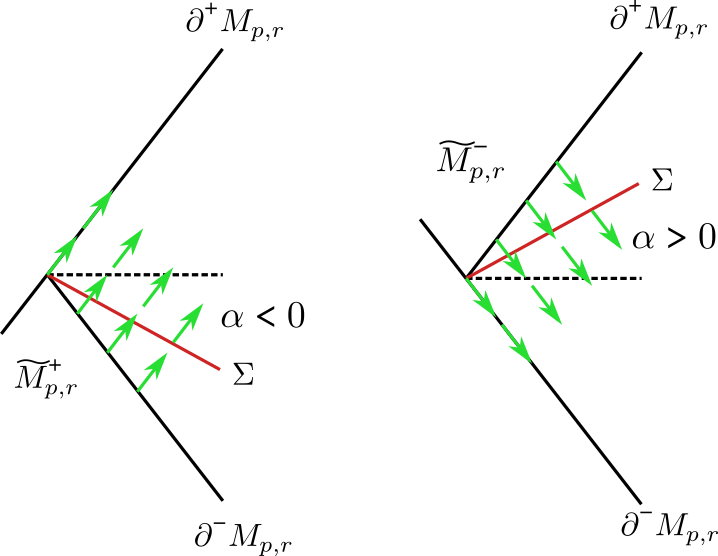}
    \caption{Admissible Variations}
    \label{fig:admissible-variations}
\end{figure}

\begin{remark}
The space $\mathfrak X(M,\Sigma)$ is always non-empty since it at least contains $\mathfrak X_{\text{tan}}(M)$. Also note that if $X \in \mathfrak X(M,\Sigma)$ then it is not in general true that $-X \in \mathfrak X(M,\Sigma)$. 


\end{remark}

\begin{proposition}
Assume that $(\Sigma,\{\bd_m\Sigma\})\subset (M,\{\bd_{m+1} M\})$ is almost properly embedded and that $X\in \mathfrak X(M,\Sigma)$. Let $\{f_t\}$ be the 1-parameter family of diffeomorphisms generated by $X$. Then $(f_t(\Sigma),\{\bd_m f_t(\Sigma)\}) \subset (M, \{\bd_{m+1}M\})$ is almost properly embedded for { $t\geq 0$} small enough. 
\end{proposition}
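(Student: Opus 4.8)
The plan is to verify that the flow $\{f_t\}$ of an admissible vector field $X \in \mathfrak X(M,\Sigma)$ preserves each of the containments in Definition \ref{Def: almost properly embedded}, namely $\bd_m f_t(\Sigma) \subset \bd_{m+1} M$ for $m \in \{n-2,n-1,n\}$, for $t \geq 0$ small. I would begin by recording two structural facts. First, by condition (i) and the definition of $\mathfrak X(M)$, the flow $f_t$ maps $M$ into $M$ and maps each stratum piece on which $X$ is tangent into itself; in particular, since $X$ restricted near any point admits a smooth extension to the local extension $\widetilde M_{p,R}$, the maps $f_t$ are honest local diffeomorphisms of the ambient extensions that carry $M$ to $M$. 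Second, $f_t(\Sigma)$ is again an embedded $C^{1,1}$-to-edge locally wedge-shaped hypersurface for small $t$, because $f_t$ is a diffeomorphism of ambient extensions and diffeomorphisms preserve the local-model structure. So the content is entirely about where the various strata of $f_t(\Sigma)$ land.

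Next I would localize and split into cases according to which stratum of $\Sigma$ the point lies in. Since $\Sigma$ is compact and the conditions (ii)--(iv) furnish a radius $r$ at each relevant point, a finite cover reduces everything to a neighborhood of a single point $p$, and it suffices to take $t$ small enough (depending on that finite cover) that $f_t$ moves points less than the relevant radii. The interior stratum $\interior(\Sigma)$: admissibility allows $X$ to push these points off $\bd^F M$ into $\interior(M)$, and since $f_t(M) \subset M$ this is automatically consistent with almost proper embeddedness (we only need $\interior(f_t(\Sigma)) \subset \bd^F M \cup \interior(M)$, which holds trivially as $f_t(\Sigma) \subset M$ and the top stratum of $f_t(\Sigma)$ maps into $M$; the edge and face strata are handled separately below). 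The edge stratum $\bd^E\Sigma$: by (ii), near each such point $X$ is tangent to $\bd^E M$, so $f_t$ maps $\bd^E M$ to $\bd^E M$ near $p$; since $f_t$ also preserves the local-model structure of $\Sigma$, the edge of $\Sigma$ (characterized tangentially by \eqref{Eq: classification tangent cone}) is carried to the edge of $f_t(\Sigma)$, and it lands in $\bd^E M$. The face stratum $\bd^F \Sigma$ breaks into $\clos(\bd^{FF}\Sigma)$ and its complement. On $\clos(\bd^{FF}\Sigma)$, condition (iii) gives $X$ tangent to $\bd M$ nearby, so $f_t$ preserves $\bd M$, and hence $\bd^F f_t(\Sigma)$ stays inside $\bd M = \bd^F M \cup \bd^E M$, which is what almost proper embeddedness demands for the codimension-one stratum. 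On $\bd^F\Sigma \setminus \clos(\bd^{FF}\Sigma)$, condition (iv) says $\bd^F\Sigma$ is locally contained in $\bd^E M$ and $X$ agrees with a vector field tangent to the stratification of one of the half-extensions $\widetilde M^{\pm}_{p,r}$; the flow of such a field preserves $\widetilde M^{\pm}_{p,r}$ and its stratification, so it carries the face of $\Sigma$ into $\bd^F \widetilde M^{\pm}_{p,r} \subset \bd^F M \cup \bd^E M \subset \bd M$, again consistent with the requirement $\bd^F f_t(\Sigma) \subset \bd^E M \cup \bd^F M$.

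I would then assemble these cases: a point of $f_t(\Sigma)$ in a given stratum $\bd_m f_t(\Sigma)$ is the image under $f_t$ of a point of $\bd_m \Sigma$ (using that $f_t$, preserving local models of both $\Sigma$ and $M$, preserves the stratification of $\Sigma$), and the case analysis shows its image lies in $\bd_{m+1} M$. This gives $(f_t(\Sigma),\{\bd_m f_t(\Sigma)\}) \subset (M, \{\bd_{m+1} M\})$, i.e.\ almost proper embeddedness.

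The main obstacle, and the step deserving the most care, is the face stratum under condition (iv): one must check that the half-extension $\widetilde M^{\pm}_{p,r}$ (which is only a one-sided extension of $M$ across the opposite face) is genuinely preserved by the flow of $\widetilde X \in \mathfrak X_{\text{tan}}(\widetilde M^{\pm}_{p,r})$ for short \emph{positive} time, and that the ``$\alpha=0$'' subcase — where $\widetilde X$ may be tangent to either half-extension — still produces an image inside $M$ and not spilling into the forbidden open half beyond $\bd M$. This is exactly the asymmetry flagged in the remark that $X \in \mathfrak X(M,\Sigma)$ does not imply $-X \in \mathfrak X(M,\Sigma)$, so the restriction to $t \geq 0$ is essential and must be used here; the argument is that the flow of $\widetilde X$ preserves $\widetilde M^{\pm}_{p,r}$ for all small $t$ (it is tangent to its boundary strata), and that the \emph{sign condition} on $\alpha$ in (iv) is precisely what guarantees $f_t$ pushes $\Sigma$'s face into the part of $\widetilde M^{\pm}_{p,r}$ that actually lies in $M$ rather than in the extended region, for $t \geq 0$. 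A secondary, more routine point is the uniform choice of $t$: one invokes compactness of $\Sigma$ to extract a finite subcover from the radii supplied by (ii)--(iv) and from the local-extension neighborhoods, then takes $t$ small relative to that finite data.
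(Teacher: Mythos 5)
Your proposal is correct and follows essentially the same route as the paper's proof: a stratum-by-stratum verification that conditions (i)--(iv) of Definition \ref{admissible-vf} respectively keep $\Sigma$, $\bd^E\Sigma$, $\clos(\bd^{FF}\Sigma)$, and $\bd^F\Sigma\setminus\clos(\bd^{FF}\Sigma)$ inside the required strata of $M$, with compactness supplying a uniform $t$. The one subtlety you rightly flag --- that under condition (iv) the flow preserves $\bd\tM^{\pm}_{p,r}$ but one must combine this with $f_t(M)\subset M$ from condition (i) to conclude the image lands in $\bd^{\pm}M_{p,r}$ rather than the extended region --- is exactly how the paper closes that case.
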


\begin{proof}
For all $t \ge 0$, it is clear that $f_t(\Sigma)$ is still a locally wedge-shaped manifold and that $\bd_m f_t(\Sigma) = f_t(\bd_m \Sigma)$ for $m = n-2,n-1,n$. Condition (i) implies that $f_t(M) \subset M$ for all $t \ge 0$, and thus $f_t(\Sigma)$ is still a locally wedge-shaped hypersurface in $M$. Condition (ii) ensures that  $f_t(\bd_{n-2} \Sigma) \subset \bd_{n-1} M$ for small enough $t \ge 0$. Condition (iii) ensures that $f_t(\clos(\bd^{FF} \Sigma)) \subset \bd_n M$ for $t\ge 0$ small enough. Finally for each $p\in \bd^F \Sigma \setminus \clos(\bd^{FF}\Sigma)$ there is a small number $r > 0$ for which $\bd^F\Sigma \cap \mB_r^L(p) \subset \bd^E M$.  Therefore conditions (i) and (iv) ensure that, shrinking $r$ if necessary, $f_t(\bd^F\Sigma \cap \mB_r^L(p))$ is either contained in $\bd^+ M_{p,r}$ or $\bd^- M_{p,r}$ for $t \ge 0$ sufficiently small. Combined with the previous observations, it follows that $\bd_{n-1}\Sigma \subset \bd_n M$ for sufficiently small $t\ge 0$. Thus $(f_t(\Sigma),\{\bd_m f_t(\Sigma)\})$ remains almost properly embedded for some short time. 
\end{proof}

Next, we consider the associated variational problem. According to the first variation formula for the mass of varifolds, one has 
\[
    \delta \Sigma(X) = \int_{\Sigma} \Div_{T_p\Sigma} X
\]
for any smooth vector field $X$. Since $\Sigma$ is a locally wedge-shaped manifold, Stokes' theorem implies that the first variation formula can be re-written as 
\begin{equation} 
    \label{smooth-fv}
    \delta \Sigma(X) = -\int_{\Sigma} \laa H_{\Sigma}, X\raa + \int_{\bd^F \Sigma} \laa \eta,X\raa,
\end{equation}
where $\eta$ denotes the unit outward co-normal along $\bd^F \Sigma$. 

\begin{definition}
    \label{definition:FBMH}
    We say that an almost properly embedded locally wedge-shaped hypersurface $(\Sigma,\{\bd_m\Sigma\})\subset (M,\{\bd_{m+1}M\})$ is a {\it free boundary minimal hypersurface} (abbreviated by FBMH) provided $\delta \Sigma(X) = 0$ for all $X\in \mathfrak X(M,\Sigma)$. 
\end{definition}

\begin{remark}
Note that the space of admissible variations depends on both $\Sigma$ and $M$.  
\end{remark}

Next, we characterize FBMHs $(\Sigma,\{\bd_m\Sigma\})\subset (M,\{\bd_{m+1}M\})$ in terms of their mean curvature and the manner in which they contact $\bd M$. 

\begin{proposition}\label{Prop: classify FBMH}
    An almost properly embedded hypersurface $(\Sigma^n,\{\bd_m\Sigma\})$ contained in $ (M^{n+1},\{\bd_{m+1}M\})$ is a FBMH if and only if the following three conditions hold:
    \begin{itemize}
        \item[(1)] $H_\Sigma \equiv 0$ in $\interior(\Sigma)=\bd_{n}\Sigma\setminus \bd_{n-1} \Sigma$, 
        \item[(2)] $\eta \perp \bd^F M$ on $\bd^{FF} \Sigma$, 
        \item[(3)] $\eta\perp \bd^+ M_{p,r}$ or $\eta\perp \bd^- M_{p,r}$ at any $p\in \bd^{FE}\Sigma$.
    \end{itemize}
\end{proposition}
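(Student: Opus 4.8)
The plan is to prove the equivalence by unpacking the first variation formula \eqref{smooth-fv} and testing it against suitably chosen vector fields from $\mathfrak X(M,\Sigma)$. The forward direction ($\Sigma$ a FBMH $\Rightarrow$ (1), (2), (3)) is the substantive part; the reverse direction ($\Rightarrow$) will follow immediately by plugging (1), (2), (3) into \eqref{smooth-fv} and checking that the boundary integral over $\bd^F\Sigma = \bd^{FF}\Sigma \cup \bd^{FE}\Sigma$ vanishes for every admissible $X$ --- on $\bd^{FF}\Sigma$ because $\eta \perp \bd^F M$ while $X(q) \in T_q \bd M$ there by condition (iii), and on $\bd^{FE}\Sigma$ because $X$ restricted near such a point is tangent to one of $\tM^\pm_{p,r}$ and hence to the corresponding face of $M$, to which $\eta$ is perpendicular by (3). (One should note that $\bd^{FF}\Sigma$ need not be closed, but its closure meets $\bd^{FE}\Sigma$ only in a set of $\mathcal H^{n-1}$-measure zero inside $\bd^E M$, so the decomposition of the boundary integral is valid up to a null set.)

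For the forward direction, I would argue in three steps. First, for (1): given any $p \in \interior(\Sigma)$, choose a vector field $X \in \mathfrak X_{\mathrm{tan}}(M) \subset \mathfrak X(M,\Sigma)$ supported in a small ball around $p$ that is disjoint from $\bd^F\Sigma$ and from $\bd M$ where relevant; since also $-X$ is admissible, \eqref{smooth-fv} reduces to $\int_\Sigma \langle H_\Sigma, X\rangle = 0$ for all such $X$, which is the standard localization argument forcing $H_\Sigma \equiv 0$ on $\interior(\Sigma)$. (A mild subtlety: interior points of $\Sigma$ lying on $\bd^F M$ --- the false boundary points --- are handled by using that admissible vector fields may push them off the face, so one still gets enough test fields; alternatively one uses continuity of $H_\Sigma$ and density of genuine interior points.) Second, for (2): at $p \in \bd^{FF}\Sigma$, having established $H_\Sigma \equiv 0$, \eqref{smooth-fv} becomes $\int_{\bd^F\Sigma}\langle \eta, X\rangle = 0$ for all admissible $X$. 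Choosing $X \in \mathfrak X_{\mathrm{tan}}(M)$ localized near $p$, the vector $X(q)$ for $q \in \bd^F M$ ranges over all of $T_q \bd M$ (and tangent fields to $\bd M$ can realize an arbitrary prescribed tangential value near $p$), so the integral identity forces the component of $\eta$ tangent to $\bd M$ to vanish, i.e. $\eta \perp \bd^F M$ at $p$. Third, for (3): at $p \in \bd^{FE}\Sigma$ we have $T_p\Sigma = P_\alpha(p)$ for some $\alpha \in [-\theta/2,\theta/2]$; condition (iv) tells us precisely which extensions $\widetilde X$ are admissible, and testing \eqref{smooth-fv} with $\widetilde X \in \mathfrak X_{\mathrm{tan}}(\tM^+_{p,r})$ (when $\alpha < 0$, say) --- for which both $\widetilde X$ and $-\widetilde X$ are admissible --- forces $\int_{\bd^F\Sigma \cap \mB^L_r(p)}\langle \eta, \widetilde X\rangle = 0$; since $\widetilde X$ can realize an arbitrary tangential value along $\bd^+ M_{p,r}$ (this is the face of $M$ containing $P_\alpha(p)$ when $\alpha < 0$), we conclude $\eta \perp \bd^+ M_{p,r}$. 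The case $\alpha = 0$ gives a choice of either face.

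The main obstacle I anticipate is the careful bookkeeping at the false edge points $\bd^{FE}\Sigma$, and in particular verifying that condition (iv) supplies enough admissible vector fields to pin down $\eta$ while remaining consistent with the $C^{1,1}$-to-edge regularity and the local extension structure: one must check that a tangent field on $\tM^\pm_{p,r}$ with a prescribed value along the relevant face actually exists, extends to a genuine element of $\mathfrak X(M,\Sigma)$, and that integrating \eqref{smooth-fv} against it is legitimate (the boundary term $\int_{\bd^F\Sigma}$ must be interpreted correctly near the edge, where $\bd^F\Sigma \subset \bd^E M$ is itself lower-dimensional inside $\bd M$). A secondary point requiring care is the overlap between $\clos(\bd^{FF}\Sigma)$ and $\bd^{FE}\Sigma$: on this overlap an admissible $X$ must simultaneously satisfy (iii) and (iv), and one should confirm this does not over-constrain the test fields in a way that breaks the localization arguments --- but since both conditions ultimately say $X$ is tangent to the relevant face of $M$, the two requirements are compatible and the argument goes through.
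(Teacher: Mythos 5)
Your proposal follows the paper's strategy quite closely in the easy direction and in establishing conditions (1) and (2), but there is a genuine gap in the last step of the converse, precisely at the point where the paper has to work hardest. At a false edge point $p\in\bd^{FE}\Sigma$ with $T_p\Sigma=P_0(p)$ (the bisecting half-plane, $\alpha=0$), the co-normal $\eta(p)$ is the unit vector of $P_0(p)$ orthogonal to $T_p\bd^EM$, and since the wedge angle satisfies $\theta(p)<\pi$ this vector is perpendicular to \emph{neither} face of $M$. So your closing sentence ``the case $\alpha=0$ gives a choice of either face'' asserts something false: condition (3) cannot hold at such a point, and the actual content of the proof is that the set $S_0$ of such points is empty. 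Your localization argument cannot establish this directly. If $S_0$ had nonempty interior in $\bd^F\Sigma$, testing with $\widetilde X\in\mathfrak X_{\text{tan}}(\tM_{p,r}^+)$ supported there would force $\eta\perp\bd^+M_{p,r}$ on an open set of $S_0$, contradicting the geometric fact above --- so the test fields yield only that $S_0$ has empty interior, not a ``choice of face.'' For a point $p\in S_0$ lying in $\clos(\bd^F\Sigma\setminus S_0)$ no localization is possible at all (every admissible field supported near $p$ also sees points with $\alpha\neq 0$, where part (iv) imposes a different tangency class), and one must instead argue by continuity: $\eta(p)$ is a limit of co-normals that are perpendicular to one of the faces, hence is itself perpendicular to a face, contradicting $\alpha(p)=0$. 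This empty-interior-plus-closure argument is the missing ingredient; without it condition (3) is not established on all of $\bd^{FE}\Sigma$.

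A secondary inaccuracy: Definition \ref{admissible-vf}(iv) only applies at points of $\bd^F\Sigma\setminus\clos(\bd^{FF}\Sigma)$, where a whole neighborhood of $p$ in $\bd^F\Sigma$ lies in $\bd^EM$. On the overlap $\bd^{FE}\Sigma\cap\clos(\bd^{FF}\Sigma)$ it supplies no test fields whatsoever (so there is no issue of (iii) and (iv) being ``simultaneously satisfied'' --- only (iii) is in force there), and condition (3) at those points must instead be deduced from condition (2) by continuity of $\eta$, as the paper does. This is a bookkeeping slip rather than a fatal error, but your step 3 as written implicitly tests with (iv)-type fields at all of $\bd^{FE}\Sigma$, which the definition does not permit.
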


\begin{proof}
First suppose that conditions (1)-(3) hold, and consider any $X\in \mathfrak X(M,\Sigma)$.   Condition (1) implies that 
\[
    \int_{\Sigma} \laa H_\Sigma,X\raa = 0.
\]
By condition (2) and part (iii) of Definition \ref{admissible-vf}, one has $\laa X,\eta\raa(p) = 0$ at all points $p\in \bd^F\Sigma \cap \Clos(\bd^{FF}\Sigma)$. Finally, condition (3) implies that at any point $p\in \bd^F\Sigma \setminus \clos(\bd^{FF}\Sigma)$ there is an $r>0$ and a choice of $\kappa\in \{\pm\}$ so that $\eta(q)\perp \bd^\kappa M_{p,r}$ for all $q\in \mB^L_r(p)\cap \bd^F \Sigma$.  Part (iv) of Definition \ref{admissible-vf} therefore implies that $\laa X, \eta\raa(p) = 0$ at any such $p$. Combining these observations, it follows that 
\[
    \int_{\bd^F \Sigma} \laa \eta,X\raa = 0,
\]
and therefore that $\delta \Sigma(X) = 0$. 

Conversely, suppose that $\delta \Sigma(X) = 0$ for all $X\in \mathfrak X(M,\Sigma)$. Since 
\[
(\bd_{n}\Sigma \setminus \bd_{n-1} \Sigma)\cap \bd^E M = \emptyset,
\] 
it follows easily from the definition of $\mathfrak X(M,\Sigma)$ that $H_\Sigma \equiv 0$ on $\bd_{n}\Sigma \setminus \bd_{n-1} \Sigma$. Next, note that $\bd^{FF} \Sigma$ is open in $\bd^F \Sigma$. Therefore, the first variation formula and the definition of $\mathfrak X(M,\Sigma)$ imply that $\eta \perp \bd^F M$ along $\bd^{FF}\Sigma$, and so condition (2) holds. Note that, by the assumed regularity of $\Sigma$, this also implies that condition (3) holds at any point $p\in \bd^{FE}\Sigma\cap \clos(\bd^{FF}\Sigma)$.  

Now observe that the set 
\[
    S = \{p\in \bd^{FE} \Sigma\setminus \clos(\bd^{FF}\Sigma):\, T_p\Sigma = P_\alpha(p) \text{ with } \alpha\neq 0\}
\]
is also open in $\bd^F\Sigma$. By the first variation formula and part (iv) of Definition \ref{admissible-vf}, it follows that condition (3) holds at all $p\in S$.  It remains to consider the set
\[
    S_0 = \{p\in \bd^{FE}\Sigma \setminus \clos(\bd^{FF}\Sigma): T_p\Sigma = P_\alpha(p) \text{ with } \alpha = 0\}.
\]
Note that $S_0$ is closed in $\bd \Sigma$. Additionally, $\eta$ is not perpendicular to either face of $M$ along $S_0$.  By the first variation formula and part (iv) of Definition \ref{admissible-vf}, it follows that the set $S_0$ has an empty interior in $\bd^F \Sigma$. In particular, this implies that $S_0 \subset \clos(\bd^F \Sigma\setminus S_0)$. Since we have already seen that $\eta$ is perpendicular to at least one face of $M$ at all points of $\bd^F\Sigma \setminus S_0$, the smoothness of $\Sigma$ implies that in fact $S_0 = \emptyset$.  Thus conditions (1)-(3) are satisfied. 
\end{proof}

\begin{remark}\label{Rem: orthogonally meeting condition}
    Since $\theta(p)<\pi$ for $p\in\bd^EM$, the orthogonally meeting conditions (2)(3) in Proposition \ref{Prop: classify FBMH} indicate that for any $p\in\bd^{FE}\Sigma$, there exists $r>0$ so that $\bd\Sigma\cap\mB^L_r(p)$ is contained either in $\bd^+M_{p,r}$ or $\bd^-M_{p,r}$, and $T_p\Sigma = P_{\pm(\theta(p)-\pi)/2}$ (see (\ref{Eq: classification tangent cone})). 
    Additionally, if $p\in \bd^E\Sigma$, then (2) implies $T_p\Sigma=P\cap T_pM$ for some horizontal hyperplane $P\in\Pcal_{T_pM}$.
\end{remark}

\subsection{Second variation for locally wedge-shaped hypersurfaces} 

Next, we investigate the stability of locally wedge-shaped hypersurfaces. Assume that $(\Sigma,\{\bd_m\Sigma\})\subset (M,\{\bd_{m+1}M\})$ is an almost properly embedded locally wedge-shaped hypersurface. Suppose that $\Sigma$ is two-sided so that it admits a continuous choice of normal vector $\nu$. 

\begin{proposition}
    Assume that $\Sigma$ is a FBMH.  Suppose that $X\in \mathfrak X(M,\Sigma)$ restricts to the vector field $f\nu$ on $\Sigma$. Then the second variation of area is given by 
    \[
        \delta^2\Sigma(X) = \int_{\Sigma} \vert \nabla f\vert^2 - (\vert A\vert^2 - \Ric^M(\nu,\nu))f^2  + \int_{\bd \Sigma} \laa \nabla_X X, \eta \raa
    \]
    where $A$ denotes the second fundamental form of $\Sigma$ and $\eta$ is the unit outward co-normal along $\bd \Sigma$.
\end{proposition}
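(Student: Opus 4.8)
The plan is to differentiate the first variation formula \mbox{(\ref{smooth-fv})} along the flow of $X$. Let $\{f_t\}$ be the family of diffeomorphisms generated by $X$; for small $t\ge 0$ it preserves almost proper embeddedness (the proposition following Definition \ref{admissible-vf}), and using the local extensions $\widetilde M_{p,R}$ from Section \ref{Sec: preliminary} one may arrange that $t\mapsto\Area(f_t(\Sigma))$ is smooth near $0$. Put $\Sigma_t:=f_t(\Sigma)$ and $\delta^2\Sigma(X):=\tfrac{d^2}{dt^2}\big|_{t=0}\Area(\Sigma_t)$. Each $\Sigma_t$ is again a locally wedge-shaped hypersurface, so Stokes' theorem gives
\[
  \frac{d}{dt}\Area(\Sigma_t)=\delta\Sigma_t(X)=-\int_{\Sigma_t}\langle H_{\Sigma_t},X\rangle+\int_{\partial^F\Sigma_t}\langle\eta_t,X\rangle,
\]
with $\eta_t$ the outward co-normal of $\partial^F\Sigma_t$. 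Differentiating again at $t=0$ and invoking Proposition \ref{Prop: classify FBMH}: since $H_\Sigma\equiv 0$, the interior term reduces to $-\int_\Sigma\langle\dot H,X\rangle$, where $\dot H:=\tfrac{D}{dt}\big|_0(H_{\Sigma_t}\circ f_t)$ and all covariant derivatives are taken with the Levi-Civita connection of $(M,g_{_M})$; and since $\langle\eta,X\rangle\equiv 0$ along $\partial^F\Sigma$ (established below), the boundary term reduces to $\int_{\partial^F\Sigma}\big(\langle\dot\eta,X\rangle+\langle\eta,\nabla_X X\rangle\big)$, where $\dot\eta:=\tfrac{D}{dt}\big|_0(\eta_t\circ f_t)$ --- the Jacobian and domain-variation contributions being annihilated by the vanishing of $\langle\eta,X\rangle$ at $t=0$.

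The identity $\langle\eta,X\rangle\equiv 0$ on $\partial^F\Sigma$ is precisely where the FBMH conditions and the definition of $\mathfrak X(M,\Sigma)$ are used. On $\Clos(\partial^{FF}\Sigma)$: $\eta\perp\partial^FM$ by Proposition \ref{Prop: classify FBMH}(2) and $X\in T\partial M$ by Definition \ref{admissible-vf}(iii). On $\partial^{FE}\Sigma\setminus\Clos(\partial^{FF}\Sigma)$: writing $T_p\Sigma=P_\alpha(p)$, one has $\alpha\neq 0$ there (as in the proof of Proposition \ref{Prop: classify FBMH}), $\eta\perp\partial^{\pm}M_{p,r}$ by Proposition \ref{Prop: classify FBMH}(3), and Definition \ref{admissible-vf}(iv) provides $\widetilde X\in\mathfrak X_{\text{tan}}(\widetilde M^{\pm}_{p,r})$ with $X=\widetilde X$ near $p$, hence $X(p)\in T_p(\partial\widetilde M^{\pm}_{p,r})$; since $\partial^{\pm}M_{p,r}\subset\partial\widetilde M^{\pm}_{p,r}$ and $\eta\perp\partial^{\pm}M_{p,r}$, again $\langle\eta,X\rangle=0$. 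The overlaps of these pieces, like $\partial^E\Sigma$ itself, are $\mathcal H^{n-1}$-null.

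It remains to compute $\dot H$ and $\dot\eta$, the classical normal-variation calculation since $X|_\Sigma=f\nu$. As $H_\Sigma=0$, $\dot H$ is normal and the linearization of the mean curvature gives $\dot H=\big(\Delta_\Sigma f+(|A|^2-\Ric^M(\nu,\nu))f\big)\nu$ in the sign convention of the statement; integrating by parts over $\Sigma$, the interior term becomes
\[
  \int_\Sigma|\nabla f|^2-(|A|^2-\Ric^M(\nu,\nu))f^2-\int_{\partial^F\Sigma}f\,\partial_\eta f.
\]
For $\dot\eta$: differentiating $\langle\eta_t,\nu_t\rangle\equiv 0$ along the flow gives $\langle\dot\eta,\nu\rangle=-\langle\eta,\dot\nu\rangle$, and since the variation is purely normal the standard identity $\dot\nu=-\nabla_\Sigma f$ holds, whence $\langle\dot\eta,\nu\rangle=\partial_\eta f$ and $\langle\dot\eta,X\rangle=f\langle\dot\eta,\nu\rangle=f\,\partial_\eta f$. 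Thus the boundary term equals $\int_{\partial^F\Sigma}\big(f\,\partial_\eta f+\langle\nabla_X X,\eta\rangle\big)$; the two co-normal terms cancel, and since $\mathcal H^{n-1}(\partial^E\Sigma)=0$ we may write $\int_{\partial^F\Sigma}=\int_{\partial\Sigma}$, obtaining the asserted formula.

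The main obstacle --- everything else being standard --- is the bookkeeping in the second step: one must check that $\langle\eta,X\rangle$ vanishes along \emph{all} of $\partial^F\Sigma$ by matching each stratum of $\partial^F\Sigma$ (the piece on $\partial^FM$, and the piece on $\partial^EM$ with its sub-cases $\alpha<0$, $\alpha>0$, $\alpha=0$) with the appropriate clause of Definition \ref{admissible-vf} and the appropriate orthogonality assertion of Proposition \ref{Prop: classify FBMH}, using in particular that the exceptional set on which $\alpha=0$ and $\eta$ is orthogonal to neither face of $M$ is empty for a FBMH. I would also record the routine regularity point that, since $\Sigma$ is only $C^{1,1}$ up to the $\mathcal H^{n-1}$-null edge $\partial^E\Sigma$, the identities above should first be derived on $\Sigma$ minus a shrinking neighborhood $N_\varepsilon(\partial^E\Sigma)$ (where $\Sigma$ is smooth), the auxiliary boundary terms over $\partial N_\varepsilon(\partial^E\Sigma)\cap\Sigma$ dropping out as $\varepsilon\to 0$ because that set has $\mathcal H^{n-1}$-measure $O(\varepsilon)$ while $f$, $\nabla f$, and $A$ stay bounded.
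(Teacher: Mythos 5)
Your proof is correct, but it follows a genuinely different route from the paper. The paper starts from the general second variation formula for the area of a submanifold under the flow of an ambient vector field (quoting \cite{simon1983lectures}), then kills the $(\div_{\Sigma}X)^2$, curvature and $|A|^2$ terms one by one using $X|_\Sigma=f\nu$ and $H_\Sigma=0$, and produces the boundary term by applying Stokes to $\int_\Sigma \div_\Sigma(\nabla_X X)$. You instead differentiate the first variation identity (\ref{smooth-fv}) along the flow, so that the interior term is governed by the linearized mean curvature (the Jacobi operator) and the boundary term by the evolution of the co-normal, with the two $f\,\partial_\eta f$ contributions cancelling. Your route is longer but more transparent about \emph{why} only $\nabla_X X$ (and no other trace of the ambient extension of $X$) survives at the boundary, and it exhibits the Jacobi operator directly; the paper's route is shorter if one accepts the general formula as a black box. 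Both arguments need the same care about integrating by parts across the $C^{1,1}$ edge, which you handle more explicitly via the cutoff near $\bd^E\Sigma$. Two small remarks. First, your second paragraph is correct but superfluous: since $X=f\nu$ on all of $\Sigma$ and $\eta\in T\Sigma$, one has $\laa \eta,X\raa=f\laa\eta,\nu\raa=0$ on $\bd\Sigma$ for free, with no appeal to Proposition \ref{Prop: classify FBMH} or to the case analysis in Definition \ref{admissible-vf}; those facts are needed to conclude $\delta\Sigma(X)=0$, not $\laa\eta,X\raa=0$. Second, the standard linearization is $\dot H=\Delta_\Sigma f+(|A|^2+\Ric^M(\nu,\nu))f$; you correctly flag that you are adopting ``the sign convention of the statement,'' which carries the same sign on $\Ric^M(\nu,\nu)$ as the paper's own computation, so this is consistent with what you were asked to prove.
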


\begin{proof}
    In general, the second variation is given by 
    \begin{align*}
    \delta^2 \Sigma(X) &= \int_{\Sigma} \Div_{T_p\Sigma}(\nabla_X X) + (\div_{T_p\Sigma} X)^2 + \vert \del^\perp X\vert^2 \\
    &\qquad - \int_{\Sigma} \sum_{i=1}^n R^M(X,e_i,X,e_i) - \sum_{i,j=1}^n \laa \del_{e_i} X,e_j\raa \laa \del_{e_j} X, e_i\raa.
    \end{align*}
    See Section 9 of \cite{simon1983lectures} for a proof when the ambient space is Euclidean. It is straightforward to compute that $\vert \del^\perp X\vert^2 = \vert \grad f\vert^2$. Since $\Sigma$ is a FBMH and $X$ is normal, we have $\div_{T_p\Sigma} X \equiv 0$. Again using that $X$ is normal, we have 
    \[
    \sum_{i,j=1}^n \laa \del_{e_i} X,e_j\raa \laa \del_{e_j} X, e_i\raa = f^2 \vert A\vert^2, \quad \text{and}\quad 
    \sum_{i=1}^n R^M(X,e_i,X,e_i) = f^2 \Ric^M(\nu,\nu). 
    \]
    Finally, since $\Sigma$ is a locally wedge-shaped manifold, we can integrate by parts to get 
    \[
    \int_{\Sigma} \div_{T_pM}(\del_X X) = \int_{\bd \Sigma} \laa \del_X X, \eta\raa. 
    \]
    Combining all of the above observations gives the desired formula. 
\end{proof}

\begin{definition}
    \label{definition:stability}
    Assume $(\Sigma,\{\bd_m\Sigma\}) \subset (M,\{\bd_{m+1}M\})$ is an almost properly embedded locally wedge-shaped hypersurface which is minimal with free boundary. Then we say $\Sigma$ is {\it stable} provided $\Sigma$ is two-sided and $\delta^2\Sigma(X) \ge 0$ for all $X = f\nu \in \mathfrak X(M,\Sigma)$.  If $U$ is a relatively open subset of $M$, then $\Sigma$ is said to be {\it stable in $U$} provided $\delta^2\Sigma(X) \ge 0$ for all $X\in \mathfrak X(M,\Sigma)$ with compact support in $U$ and $\Sigma$ is $2$-sided in $U$.  
\end{definition}

\section{Convergence of locally wedge-shaped free boundary minimal hypersurfaces}
\label{sec:compactness}

In this section, we study the convergence of locally wedge-shaped FBMHs with uniformly bounded area and second fundamental form. 
Since the smooth convergence in $M\setminus\bd^E M$ has been investigated by Guang-Li-Zhou \cite{guang2020curvature}, we mainly focus on the points in $\bd^E M$. 
Although these results share the same spirit as \cite[Section 6]{guang2020curvature}, the corresponding discussion will be more complicated due to the lack of Fermi coordinates. 

To begin with, we review some basic facts on the local Riemannian geometry of a locally wedge-shaped manifold. 

\begin{lemma}\label{Lem: local chart}
    Let $M^{n+1}\subset \R^L$ be a locally wedge-shaped Riemannian manifold (with induced metric). 
    Given $\epsilon > 0$, there exist $r_0 = r_0(\epsilon, M) >0$ and $C_0=C_0(M)>0$ so that for any $p\in\bd^E M$ there is a local model $(\phi_p, \mB_{r_0}^L(p), \Omega_p)$ of $M$ around $p$ satisfying
    \begin{enumerate}[label=(\roman*)]
        \item $\frac{1}{1+\epsilon}\leq |D\hat{\phi}_p| \leq 1+\epsilon$ in $\Omega_{r_0} := \Omega_p\cap \mB^{n+1}_{r_0}(0)$, 
        \item $\|g_{_{\R^{n+1}}} - \hat{\phi}_p^*g_{_M} \|_{C^0(\Omega_{r_0})} \leq \epsilon$, and $\|g_{_{\R^{n+1}}} - \hat{\phi}_p^*g_{_M} \|_{C^2(\Omega_{r_0})}\leq C_0$,
        \item $|A_{\bd^E M}|\leq C_0$ in $\mB^{L}_{r_0}(p)\cap \bd^E M$, 
        \item $|A_{\bd^F M}|\leq C_0$ in $\mB^{L}_{r_0}(p)\cap \bd^F M$,
    \end{enumerate}
    where $\hat{\phi}_p := \phi_p \llcorner (\mB^{n+1}_{r_0}(0)\times 0^{L-(n+1)})$, $A_{\bd^E M}$ and $A_{\bd^F M}$ are the second fundamental forms of $\bd^E M$ and $\bd^F M$ respectively. 
\end{lemma}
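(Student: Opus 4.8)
The existence of \emph{some} local model around each point $p\in\bd^E M$ is already contained in Definition~\ref{Def: wedge manifold}; the content of the lemma is to select these models with a uniform radius $r_0$ and uniform $C^2$-control on the pulled-back metric, and then to read off the remaining estimates. The plan is to run a compactness argument over the (compact) edge, recenter the models, and finish with a ``differential-isometric-at-the-origin, then shrink-the-ball'' estimate. For the first step, cover $\bd^E M$ by finitely many balls $\mB^L_{\rho_j}(p_j)$, $j=1,\dots,N$, on each of which $M$ has a local model $(\phi_j,\mB^L_{2\rho_j}(p_j),\Omega_j)$; since $M$ is $C^\infty$-to-edge each $\phi_j$ is a globally smooth diffeomorphism, so $\|\phi_j\|_{C^3}<\infty$, and we set $\rho_*:=\min_j\rho_j$. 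Given $p\in\bd^E M$, pick $j$ with $p\in\mB^L_{\rho_j}(p_j)$ and let $q:=\phi_j^{-1}(p)$; by the stratification compatibility of $\phi_j$ together with (\ref{Eq: edge is closed manifold}), the point $q$ lies on $\bd^E\Omega_j$, which (after the normalization (\ref{Eq: standard wedge})) is a linear subspace along which $\Omega_j$ is translation-invariant, so $\phi_j\circ\tau_q$ with $\tau_q(x)=x+q$ still carries the product wedge $\Omega_j\times 0^{L-(n+1)}$ onto $M$ and maps $0\mapsto p$. Its differential at $0$ is $D\phi_j|_q$, which is $O(|q|)$-close to $D\phi_j|_0\in O(L)$; choosing $Q\in O(L)$ close to $D\phi_j|_0$ with $Q(\R^{n+1}\times 0^{L-(n+1)})=D\phi_j|_q(\R^{n+1}\times 0^{L-(n+1)})$ and putting $G:=(D\phi_j|_q)^{-1}Q$ — a linear automorphism $O(|q|)$-close to the identity that preserves the slice $\R^{n+1}\times 0^{L-(n+1)}$ — one verifies that $\phi_p:=\phi_j\circ\tau_q\circ G$, followed if necessary by a fixed orthogonal map rotating the image wedge to the standard form $\Omega^{n+1}_{\theta(p)}$ and by a harmless reparametrization, is a genuine local model $(\phi_p,\mB^L_{\rho_*}(p),\Omega_p)$ of $M$ at $p$ (its wedge angle being forced to equal $\theta(p)$ by Remark~\ref{Rem: intrinsic wedge angle}), provided $\rho_*$ is first shrunk by a fixed factor. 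Since $\phi_p$ is assembled from finitely many fixed smooth maps composed with a translation and a linear map uniformly close to the identity, $\|\phi_p\|_{C^3}\le C_0(M)$.

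For the metric estimates, on $\Omega_{r_0}=\Omega_p\cap\mB^{n+1}_{r_0}(0)$ (any $r_0\le\rho_*$) the pulled-back metric is $\hat g_p:=\hat\phi_p^*g_{_M}=\langle D\hat\phi_p\,\cdot\,,\,D\hat\phi_p\,\cdot\,\rangle_{\R^L}$, so $\|\hat g_p\|_{C^2}\le C_0(M)$ because $\|\hat\phi_p\|_{C^3}\le C_0(M)$; this is the second inequality in (ii). Since $D\hat\phi_p|_0$ is the restriction of an element of $O(L)$ to an $(n+1)$-plane, it is an isometric embedding, and hence $\hat g_p(0)=g_{_{\R^{n+1}}}$; combined with the $C^1$-bound just obtained this yields $\|\hat g_p-g_{_{\R^{n+1}}}\|_{C^0(\Omega_p\cap\mB^{n+1}_r(0))}\le C_0 r$, so choosing $r_0:=\min\{\rho_*,\,\epsilon/C_0\}$ gives the first inequality in (ii). Estimate (i) is then immediate, since $|D\hat\phi_p(v)|^2=\hat g_p(v,v)$ for unit vectors $v$ and $\hat g_p$ is within $\epsilon$ of $g_{_{\R^{n+1}}}$ on $\Omega_{r_0}$ (after harmlessly relabeling $\epsilon$).

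For (iii), recall from (\ref{Eq: edge is closed manifold}) that $\bd^E M$ is a compact $C^\infty$ embedded submanifold of $\R^L$, hence has globally bounded second fundamental form. For (iv), shrinking $r_0$ so that $\mB^L_{r_0}(p)\cap\bd M$ lies inside the chart, $\bd^F M\cap\mB^L_{r_0}(p)$ is contained in the union of the two $C^\infty$ hypersurfaces $\bd^\pm M_{p,r_0}=\phi_p\big((\bd^\pm\Omega_p\times 0^{L-(n+1)})\cap\mB^L_{r_0}(0)\big)$, which are images of flat half-hyperplanes under $\phi_p$ and therefore have second fundamental forms controlled by $\|\phi_p\|_{C^2}\le C_0(M)$ (by compactness of $M$ away from $\bd^E M$, one gets such a bound for $|A_{\bd^F M}|$ globally as well). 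Taking $C_0$ to be the maximum of the finitely many constants produced above completes the argument.

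The part requiring the most care is the recentering in the first step: one must transport a local model along the edge and then repair both the normalization $D(\phi_p)_0\in O(L)$ and the standard form of $\Omega_p$ demanded by Definition~\ref{Def: wedge manifold}, all while keeping the $C^3$-control uniform over the finitely many charts. Once this linear-algebra bookkeeping is in place, the metric estimates (i)--(ii) and the curvature bounds (iii)--(iv) follow from the standard ``isometric differential at the origin, then shrink the ball'' mechanism together with the compactness of $M$ and of $\bd^E M$.
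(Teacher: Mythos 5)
Your proof is correct and follows essentially the same route as the paper: the pointwise estimates come from the normalization $(D\phi_p)_0\in O(L)$ and the fact that $g_{_M}$ is induced from $\R^L$, and uniformity over $\bd^EM$ comes from compactness of the edge. The paper compresses your recentering-and-renormalizing step into the phrase ``standard compactness argument,'' so your write-up simply makes explicit what the paper leaves implicit.
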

\begin{proof}
    For any $p\in\bd^E M$, since $g_{_M}$ is induced from $g_{_{\R^L}}$ and $(D\phi_p)_0\in O(L)$ (by Definition \ref{Def: wedge manifold}(i)), we can take $r_0>0$ small enough so that $\frac{1}{1+\epsilon}\leq |D\hat{\phi}| \leq 1+\epsilon$ in $\Omega_{r_0} := \Omega\cap \mB^{n+1}_{r_0}(0)$, and $\|g_{_{\R^{n+1}}} - \hat{\phi}^*g_{_M} \|_{C^0(\Omega_{r_0})} \leq \epsilon$. 
    Note $\bd^E M$ is a closed smooth $(n-1)$-manifold (\ref{Eq: edge is closed manifold}). 
    The rest part follows directly from a standard compactness argument. 
\end{proof}

Take $\epsilon>0$ small enough so that $(1+\epsilon)^4 < \min\{\frac{\pi}{2}, \frac{32}{27}\}$ and $[\cos((1+\epsilon)^4/2)]^{-2} < \frac{3}{2}$. 
Then for the corresponding $r_0>0$ and the local models $\{(\phi_p, \mB^L_{r_0}(p), \Omega_p)\}_{p\in\bd^EM}$ given by Lemma \ref{Lem: local chart}, we have the following result generalizing \cite[Lemma 2.4]{colding2011course}. 

\begin{lemma}[Small curvature implies graphical]\label{Lem: small curvature implies graphical}
    Suppose $(\Sigma, \{\bd_m \Sigma\})\subset (M, \{\bd_{m+1} M\})$ is an almost properly embedded $C^{1,1}$-to-edge locally wedge-shaped hypersurface satisfying
    \begin{itemize}
        \item[(1)] $\Sigma$ meets $\bd M$ orthogonally along $\bd \Sigma$ in the sense of Proposition \ref{Prop: classify FBMH}(2)(3),
        \item[(2)] $\sup_\Sigma|A_\Sigma|^2 \leq \frac{1}{16s^2}$ for some $s\in (0, \min\{\frac{r_0}{4}, \frac{1}{8}\})$,
    \end{itemize}
    where $A_\Sigma$ is the second fundamental form of $\Sigma$.
    Then using the notations in Lemma \ref{Lem: local chart}, for any $p_0\in \bd^EM$ and $ p \in\Sigma\cap \mB^L_{r_0/2}(p_0)$, we have
    \begin{itemize}
        \item[(i)] $(\hat{\phi}_{p_0})^{-1}(B^\Sigma_{2s_0}(p))$ can be written as a graph of a  $C^{1,1}$-function $\hat{u}$ over its tangent space at $x_0=(\hat{\phi}_{p_0})^{-1}(p)$ so that $\hat{u}$ is smooth away from the edge, $|\nabla \hat{u}|\leq 1$ and $|\Hess(\hat{u})|\leq \frac{1+C_0}{\sqrt{2}s}$ with respect to the metric $\hat{g} := \hat{\phi}_{p_0}^{*}g_{_M}$;
        \item[(ii)] the connected component $\Sigma'$ of $\mB^L_{s_0}(p)\cap\Sigma$ containing $p$ is contained in $B^\Sigma_{2s_0}(p)$, 
    \end{itemize}
    where $s_0:=\frac{s}{1+C_0}$, and $B^\Sigma_{t}(p)$ is the intrinsic ball in $\Sigma$ centered at $p$ with radius $t>0$. 
\end{lemma}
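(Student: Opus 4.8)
\textbf{Proof proposal for Lemma \ref{Lem: small curvature implies graphical}.} The plan is to adapt the standard ``small curvature implies graphical'' argument (cf.\ \cite[Lemma 2.4]{colding2011course}) to the pulled-back setting in the chart $\hat\phi_{p_0}$, being careful to control the extra terms coming from the metric $\hat g = \hat\phi_{p_0}^* g_{_M}$ and to handle the orthogonal-meeting condition along the edge. Throughout I work in the fixed Euclidean ball $\mB^{n+1}_{r_0}(0)$ with the metric $\hat g$, which by Lemma \ref{Lem: local chart} is $\epsilon$-close to $g_{_{\R^{n+1}}}$ in $C^0$ and has $C^2$-norm bounded by $C_0$; let $\hat\Sigma := \hat\phi_{p_0}^{-1}(\Sigma)$, so that $\hat\Sigma$ is an almost properly embedded $C^{1,1}$-to-edge hypersurface in the wedge domain $\Omega_{p_0}$ with $|A_{\hat\Sigma}|_{\hat g}^2 \le \tfrac{(1+\epsilon)^2}{16 s^2}$ after absorbing the distortion of the chart, and which meets $\bd\Omega_{p_0}$ orthogonally.

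First I would set up the graph: fix $p \in \Sigma \cap \mB^L_{r_0/2}(p_0)$, let $x_0 = \hat\phi_{p_0}^{-1}(p)$, let $T = T_{x_0}\hat\Sigma$ with $\hat g$-unit normal $\hat\nu$, and consider the projection $\pi: \mB^{n+1}_{r_0}(0) \to T$. The goal is to show that the component of $\hat\Sigma$ through $x_0$ in $B^{\hat\Sigma}_{2s_0}(x_0)$ is a graph over $\pi(B^{\hat\Sigma}_{2s_0}(x_0)) \subset T$ of a function $\hat u$ with $|\grad \hat u|_{\hat g} \le 1$ and $|\Hess \hat u|_{\hat g} \le \tfrac{1+C_0}{\sqrt 2 s}$. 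I would parametrize $B^{\hat\Sigma}_{2s_0}(x_0)$ by unit-speed $\hat g$-geodesics $\gamma(t)$ emanating from $x_0$, and track the angle $\beta(t)$ between $\gamma'(t)$ and $T$ (equivalently $\langle \hat\nu(\gamma(t)), \hat\nu(x_0)\rangle_{\hat g}$, or $\sin\beta = $ the $T^\perp$-component of $\gamma'$). The second fundamental form bound gives $|\tfrac{d}{dt}\gamma'| \le |A_{\hat\Sigma}|_{\hat g}$ along $\gamma$ (geodesics in $\hat\Sigma$ have acceleration equal to the normal component, which is $A_{\hat\Sigma}(\gamma',\gamma')$), so $|\beta'(t)| \le |A_{\hat\Sigma}|_{\hat g} \le \tfrac{1+\epsilon}{4s} \le \tfrac{1}{2 s_0}$ roughly; integrating over $t \in [0, 2s_0]$ keeps $\beta$ below $\pi/4$, hence $|\grad \hat u| = |\tan\beta| \le 1$. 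The Hessian bound follows because $\Hess \hat u$ at a point is, up to the tilt factor $\sec\beta$ (bounded by the condition $[\cos((1+\epsilon)^4/2)]^{-2} < 3/2$), the tangential part of $A_{\hat\Sigma}$ expressed in the graph coordinates, giving the stated $\tfrac{1+C_0}{\sqrt 2 s}$ after accounting for the chart distortion; the $(1+C_0)$ in $s_0$ and in the Hessian bound is exactly what absorbs $\|\hat g\|_{C^2} \le C_0$. Part (ii) is then a standard consequence: since $\hat\Sigma$ near $x_0$ is a graph of small gradient over a disk of radius $\approx 2 s_0$ in $T$, the intrinsic and extrinsic distances are comparable ($d_{\hat\Sigma} \le (1+\text{small}) d_{\R^{n+1}}$ on this piece), so the component of $\mB^L_{s_0}(p) \cap \Sigma$ through $p$ cannot escape $B^\Sigma_{2 s_0}(p)$ — any path in $\Sigma$ from $p$ leaving $B^\Sigma_{2 s_0}(p)$ must first exit the graphical region, but the graphical region contains a full extrinsic $s_0$-neighborhood of $p$ within $\Sigma$.

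The main obstacle — and the place where this differs from the classical closed-hypersurface case — is handling the edge $\bd^E\hat\Sigma$ and the orthogonal-meeting condition, since $\hat\Sigma$ is only $C^{1,1}$ there and the geodesics realizing $B^{\hat\Sigma}_{2s_0}(x_0)$ may pass through or reflect off $\bd\hat\Sigma$. To deal with this I would invoke the orthogonal-meeting condition from Proposition \ref{Prop: classify FBMH}(2)(3) and Remark \ref{Rem: orthogonally meeting condition} to pass to the local extension: near a face point $q \in \bd^F\hat\Sigma$ one doubles $\hat\Sigma$ across the corresponding face of $\Omega_{p_0}$ (using $\tM^{\pm}_{p_0,r}$) to obtain a $C^{1,1}$ hypersurface without that piece of boundary and with the same curvature bound, so the geodesic/angle argument applies to the doubled object; near an edge point $q \in \bd^E\hat\Sigma$, $\hat\Sigma$ is a horizontal wedge domain $P \cap T_q\Omega$ (Remark \ref{Rem: orthogonally meeting condition}) and one can similarly reflect, or simply note that the $C^{1,1}$-to-edge regularity makes the geodesic-angle estimate valid across the edge with the acceleration bound holding in the weak ($L^\infty$) sense, which is all the integration argument needs. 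The orthogonality is what guarantees that the reflection produces a $C^{1,1}$ (indeed better) hypersurface rather than one with a genuine corner, so that the unit-normal comparison $\langle \hat\nu(\gamma(t)), \hat\nu(x_0)\rangle_{\hat g}$ stays close to $1$ uniformly. Once the graphical representation is established on the doubled/reflected object, restricting back to $\hat\Sigma = \hat\phi_{p_0}^{-1}(\Sigma)$ and pushing forward by $\hat\phi_{p_0}$ (which only costs the already-accounted factors $(1+\epsilon)$) yields the statement.
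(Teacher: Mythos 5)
Your strategy for part (i) is essentially the paper's (control the Gauss map / normal angle along almost-minimizing curves in the chart, absorb the chart distortion into $(1+\epsilon)$ and $(1+C_0)$), but there is a genuine gap in how you treat curves that meet $\bd\Sigma$, and this gap is fatal for part (ii). The distance-realizing curves in $\Sigma$ can run for a positive length \emph{along} $\bd^F\Sigma$ or $\bd^E\Sigma$; on those portions they are geodesics of the boundary stratum, not of $\Sigma$, so your claimed acceleration bound $|\tfrac{d}{dt}\gamma'|\le |A_{\hat\Sigma}|$ fails. The correct bound there is $|A_\Sigma(\gamma',\gamma')|+|A_{\bd^F M}(\gamma',\gamma')|$ (resp.\ $+|A_{\bd^E M}(\gamma',\gamma')|$), obtained from the orthogonal-meeting condition, and it is precisely the bounds $|A_{\bd^F M}|,|A_{\bd^E M}|\le C_0$ from Lemma \ref{Lem: local chart}(iii)(iv) that make the constant $s_0=s/(1+C_0)$ work: the paper integrates $|\tfrac{d}{dt}\laa\gamma'(t),\gamma'(0)\raa|\le C_0+\tfrac{1}{4s}$ to get $\laa q-p,\gamma'(0)\raa>s_0$ for any $q$ at intrinsic distance $2s_0$, which is what proves (ii). Your substitute for (ii) --- ``intrinsic and extrinsic distances are comparable on the graph'' --- does not follow from $|\nabla\hat u|\le 1$ alone, because the projected domain $\pi(B^\Sigma_{2s_0}(p))$ need not be convex (it is cut by $\bd\Sigma$), so the length of the projected path does not bound the displacement $|\pi(q)-\pi(p)|$ from below. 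Note also that this issue does not arise in part (i), where the Weingarten bound $|dN|\le|A_\Sigma|$ holds along \emph{arbitrary} curves in $\Sigma$, geodesic or not; your detour through geodesics creates the boundary problem there unnecessarily.

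Your proposed fix by doubling across the faces also does not work as stated. In the chart, the faces of $\Omega_{p_0}$ are Euclidean hyperplanes but $\Sigma$ meets them orthogonally with respect to $\hat g=\hat\phi_{p_0}^*g_{_M}$, not with respect to $g_{_{\R^{n+1}}}$; the Euclidean reflection therefore produces a surface that is only Lipschitz (with an $O(\epsilon)$ angle defect) across the interface, while any $\hat g$-adapted reflection is not an isometry because the faces of $M$ are not totally geodesic, so the reflected piece does not satisfy ``the same curvature bound'' --- it picks up exactly the $A_{\bd^{\pm}M}$ terms you are trying to avoid. The paper never reflects in this lemma (reflection is reserved for the Bernstein theorem, where the wedge is genuinely Euclidean and the faces are flat); instead it keeps the boundary-constrained geodesic pieces and estimates their ambient acceleration directly. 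To repair your argument, replace the geodesic/doubling discussion with the piecewise decomposition of $\gamma$ into pieces lying in $\interior(\Sigma)$, $\bd^F\Sigma$, and $\bd^E\Sigma$, and use the stratum-wise acceleration bound together with Lemma \ref{Lem: local chart}(iii)(iv).
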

\begin{proof}
    Note $B^\Sigma_{2s}(p)\subset \mB^L_{r_0/2}(p)\cap M \subset \mB^L_{r_0}(p_0)\cap M$. 
    We denote by $\widehat{\Sigma} := (\hat{\phi}_{p_0})^{-1}(\Sigma\cap \mB^L_{r_0/2}(p))$, and pull back all the computations into $\Omega_{p_0}\cap \mB^{n+1}_{r_0}(0)$. 
    Let $g_{_0}:= g_{_{\R^{n+1}}}$. By Lemma \ref{Lem: local chart}(ii), 
    $$ \sup |A_{\widehat{\Sigma}, g_{_0}}| ~\leq ~ (1+\epsilon)^2(1+C_0)\sup |A_{\widehat{\Sigma}, \hat{g}}| ~\leq ~ \frac{(1+\epsilon)^2(1+C_0)}{4s} .$$
    Let $N:\widehat{\Sigma} \to \mS^n $ be the Gauss map (with respect to $g_0$), and $d_{x_0,y} := \dist_{\mS^n}(N(x), N(y))$ for $x_0:=(\hat{\phi}_{p_0})^{-1}(p),y\in \widehat{\Sigma}$. 
    Given  $y\in\widehat{\Sigma}$, consider the piecewise smooth curve $\alpha$ connecting $x_0$ and $y$ in $\widehat{\Sigma}$ so that $L(\alpha,g_{_0})\leq (1+\epsilon)\dist_{\widehat{\Sigma},g_{_0}}(x_0,y)$, where $L(\alpha,g_{_0})$ is the length of $\alpha$ with respect to $g_{_0}$. 
    Combining the computations in \cite[(2.45)]{colding2011course} with Lemma \ref{Lem: local chart}(i), we obtain
    $$d_{x,y}\leq \frac{(1+\epsilon)^3(1+C_0)}{4s}\dist_{\widehat{\Sigma}, g_{_0}}(x_0,y)\leq \frac{(1+\epsilon)^4(1+C_0)}{4s}\dist_{\widehat{\Sigma}, \hat{g}}(x_0,y) .$$
    Hence, for any $q\in B^{\Sigma}_{2s_0}(p)$, we have $y=(\hat{\phi}_{p_0})^{-1}(q)\in B^{\widehat{\Sigma},\hat{g}}_{2s_0}(x_0)$ and $d_{x_0,y} \leq \frac{(1+\epsilon)^4}{2} < \frac{\pi}{4}$ by the choice of $\epsilon>0$, where $B^{\widehat{\Sigma},\hat{g}}_{t}(x_0)$ is the intrinsic ball of $\widehat{\Sigma}$ under the metric $\hat{g}$. 
    Hence, $\widehat{\Sigma}$ is a graph of function $\hat{u}$ over $T_{x_0}\widehat{\Sigma}$. 
    Additionally, the computations in \cite[Lemma 2.4]{colding2011course} give the following estimates (away from the edge):
    $$ |\nabla^{g_{_0}} \hat{u}|\leq \left([\cos((1+\epsilon)^4/2)]^{-2} - 1 \right)^{\frac{1}{2}} \leq \frac{\sqrt{2}}{2}, \quad |\Hess_{g_{_0}}\hat{u}|^2\leq \frac{27}{8}\sup |A_{\widehat{\Sigma}, g_{_0}}|^2 < \frac{(1+C_0)^2}{4s^2} ,$$
    by the choice of $\epsilon$. 
    Therefore, combining this with Lemma \ref{Lem: local chart}(ii) gives the estimates of $\hat{u}$ with respect to the metric $\hat{g}$. 
    
    For any $q\in\Sigma$ with $\dist_\Sigma(p,q) = 2s_0$, there is a piecewise smooth curve $\gamma:[0,l]\to\Sigma$ so that $2s_0\leq l=L(\gamma)\leq (1+\epsilon)2s_0$, and 
    $ \gamma\llcorner (t_i,t_{i+1})$ is a geodesic in $\interior(\Sigma)$ or $\bd^F \Sigma$ or $\bd^E\Sigma$, where $0=t_0<t_1<\dots<t_k=l$. 
    Note the geodesic exists in $\bd^E\Sigma$ since $\Sigma$ is $C^{1,1}$-to-edge.
    It follows from the almost properly embeddedness and condition (1) that 
    $$ |\nabla^M_{\gamma'}\gamma'| 
        \leq \left\{ \begin{array}{ll}
		          |A_{\Sigma}(\gamma',\gamma')|, ~&~{\rm on~} \gamma\llcorner (t_i,t_{i+1}) \subset \interior(\Sigma) ,\\
		          |A_{\bd^F M}(\gamma',\gamma')| +|A_{\Sigma}(\gamma',\gamma')|,~ &~{\rm on~}\gamma\llcorner (t_i,t_{i+1}) \subset \bd^F \Sigma, \\
		          |A_{\bd^E M}(\gamma',\gamma')| +|A_{\Sigma}(\gamma',\gamma')|, ~&~{\rm on~} \gamma\llcorner (t_i,t_{i+1}) \subset \bd^E \Sigma~{{\rm (almost~everywhere)}},
	\end{array} \right.  $$
    which further implies $|\frac{d}{dt} g_{_{\R^L}}(\gamma'(t),\gamma'(0))| = |g_{_M}(\nabla^M_{\gamma'}\gamma'(t), \gamma'(0))|\leq C_0 + \frac{1}{4s}$ by condition (2) and Lemma \ref{Lem: local chart}(iii)(iv). 
    After integrating this from $0$ to $t\in [0,l]$, we see 
    $g_{_{\R^L}}(\gamma'(t),\gamma'(0))\geq 1-C_0t-\frac{t}{4s}$. 
    Therefore, since $2s<1/4$, we obtain
    \begin{eqnarray*}
        |q-p|\geq g_{_{\R^L}}(q-p, \gamma'(0)) &\geq& \int_0^l 1-C_0t-\frac{t}{4s} ~dt ~=~ l- \frac{C_0}{2}l^2 - \frac{1}{8s}l^2\\
        &\geq& s_0\left[ 2 - 2C_0(1+\epsilon)^2s_0 - \frac{(1+\epsilon)^2}{2s}s_0 \right]\\
        &\geq& \frac{s}{1+C_0}\left(2-\frac{3(1+\epsilon)^2}{4}\right) > \frac{s}{1+C_0},
    \end{eqnarray*}
    and $q$ lies outside of $\Sigma\cap\mB^L_{s_0}(p)$, which implies (ii).  
\end{proof}

\begin{lemma}\label{Lem: properness of graph}
    Using the assumptions and notations in Lemma \ref{Lem: small curvature implies graphical}, we also have the following results by shrinking $r_0>0$ even smaller (independent on $\Sigma$):
    \begin{itemize}
        \item[(i)] if $p\in \bd^E\Sigma'$ then $\Sigma'$ is properly embedded in $M_{p_0,r_0}$;
        \item[(ii)] if $p\in\bd^{FE}\Sigma'$, then $\Sigma'$ is properly embedded either in $\tM^{+}_{p_0,r_0}$ or $\tM^{-}_{p_0,r_0}$; 
        \item[(iii)] if $p\in\bd^{FF}\Sigma'$ with $T_p\Sigma'$ sufficiently close to the vertical half-plane $P_{\kappa(\theta(p_0)-\pi)/2}\subset T_{p_0}M$ defined in (\ref{Eq: vertical hyperplane in manifold}), then $\Sigma'$ is properly embedded in $\tM^{\ka}_{p_0,r_0}$ for some $\kappa\in\{\pm\}$. 
    \end{itemize} 
\end{lemma}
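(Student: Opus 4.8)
The plan is to transport everything into the chart $\hat\phi_{p_0}$ of Lemma \ref{Lem: local chart}, in which $M\cap\mB^L_{r_0}(p_0)$ becomes the flat model wedge $\Omega_{p_0}=\Omega^{n+1}_{\theta(p_0)}$ carrying a metric $\hat g$ that is $C^0$-close to Euclidean and $C^2$-bounded, with faces $P^n_{\pm\theta(p_0)/2}$ and edge $\{x_1=x_2=0\}$. Write $\widehat\Sigma'=\hat\phi_{p_0}^{-1}(\Sigma')$ and $x_0=\hat\phi_{p_0}^{-1}(p)$. By Lemma \ref{Lem: small curvature implies graphical}, $\widehat\Sigma'$ lies inside a $C^{1,1}$ graph over a piece of $T_{x_0}\widehat\Sigma'$ with Euclidean gradient at most $\tfrac{\sqrt2}{2}$, so every tangent plane of $\widehat\Sigma'$ makes an angle strictly less than $\tfrac\pi4+O(\epsilon)$ with $T_{x_0}\widehat\Sigma'$. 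The decisive observation is that the ``type'' of $T_{x_0}\widehat\Sigma'$ is pinned down by which stratum of $\Sigma'$ contains $p$: if $p\in\bd^E\Sigma'$ then $T_{x_0}\widehat\Sigma'$ is a horizontal wedge domain, so its span is $O(\epsilon)$-close to a horizontal hyperplane $\mathbb R^2\times W^{n-2}$; while if $p\in\bd^{FE}\Sigma'$ or $p\in\bd^{FF}\Sigma'$ then, using the orthogonal-meeting hypothesis (condition (1) of Lemma \ref{Lem: small curvature implies graphical}) and Remark \ref{Rem: orthogonally meeting condition}, $T_{x_0}\widehat\Sigma'$ is $O(\epsilon)$-close to a vertical half-hyperplane $P_{\kappa(\theta(p_0)-\pi)/2}$ for some sign $\kappa\in\{\pm\}$ (in case (iii) this is the hypothesis, with $\kappa$ naming the face $\bd^\kappa M$ perpendicular to that plane, which is the face containing $p$).

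The engine is an angle dichotomy. A false boundary point of $\Sigma'$ (an interior point on $\bd^FM$) must have tangent plane equal to a face tangent $P^n_{\pm\theta(p_0)/2}$; a false edge point (a face point on $\bd^EM$) must have tangent plane a vertical half-hyperplane; an edge point of $\Sigma'$ has tangent plane a horizontal wedge domain. In the flat model wedge with $0<\theta(p_0)<\pi$ one checks directly that the span of any horizontal hyperplane is orthogonal both to every face tangent and to every vertical half-hyperplane, and after shrinking $r_0$, Lemma \ref{Lem: local chart} makes this persist up to $O(\epsilon)$ in the chart. Hence in case (i), where $T_{x_0}\widehat\Sigma'$ is horizontal, a false boundary or false edge point of $\Sigma'$ would produce a tangent plane simultaneously within $\tfrac\pi4+O(\epsilon)$ of a horizontal plane and within $O(\epsilon)$ of a plane orthogonal to it --- impossible for $\epsilon$ fixed small. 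So $\Sigma'$ has no false boundary points and no false edge points, which is exactly $\bd^F\Sigma'=\bd^FM\cap\Sigma'$ and $\bd^E\Sigma'=\bd^EM\cap\Sigma'$; that is, $\Sigma'$ is properly embedded in $M_{p_0,r_0}$. In cases (ii) and (iii), where $T_{x_0}\widehat\Sigma'$ is vertical, the same comparison excludes every edge point of $\Sigma'$ and every false boundary point of $\Sigma'$ relative to the half-space $\tM^\kappa_{p_0,r_0}$, whose boundary $\bd H^{n+1}_\kappa$ contains $\bd^\kappa M$.

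It then remains, in cases (ii) and (iii), to check that $\bd\Sigma'$ lies entirely on $\bd\tM^\kappa_{p_0,r_0}$ --- equivalently that $\bd^{FF}\Sigma'$ meets only the face $\bd^\kappa M$ and not the opposite face $\bd^{-\kappa}M$ (which is interior to $\tM^\kappa$), since $\bd^E\Sigma'\cup\bd^{FE}\Sigma'\subset\bd^EM\subset\bd\tM^\kappa_{p_0,r_0}$ automatically. By Remark \ref{Rem: orthogonally meeting condition} and the orthogonal-meeting hypothesis, $\bd\Sigma'$ near $p$ already lies on $\bd^\kappa M$; since $\widehat\Sigma'$ is connected of diameter $\lesssim s_0$ and is a graph over a vertical-type plane with controlled gradient and Hessian, continuity of its tangent planes forbids $\bd\Sigma'$ from reaching $\bd^{-\kappa}M$ without the tangent rotating by a definite amount --- impossible once $r_0$ is small enough. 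Combined with the exclusion of edge and false boundary points, this gives $\bd\Sigma'\subset\bd\tM^\kappa_{p_0,r_0}$ and $\interior(\Sigma')\subset\interior(\tM^\kappa_{p_0,r_0})$, so $\Sigma'$ is properly embedded in $\tM^\kappa_{p_0,r_0}$.

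The main obstacle is this last control of $\bd\Sigma'$ in cases (ii) and (iii), together with making everything uniform over $p_0$. Uniformity of the chart geometry over all $p_0\in\bd^EM$ comes from compactness of the edge $\bd^EM$ and the freedom to shrink $r_0$. The genuinely delicate point is the narrow-wedge regime $\theta(p_0)<\tfrac\pi2$, in which the relevant half-hyperplane $P_{\kappa(\theta(p_0)-\pi)/2}$ points out of the model wedge: there, the configuration of case (ii) cannot occur (a tangent cone $P_{\pm(\theta-\pi)/2}$ with $\theta<\tfrac\pi2$ violates the admissible range $|\alpha|\le\theta/2$ for vertical half-hyperplanes $P_\alpha$, hence is incompatible with $\Sigma'\subset M$), and in case (iii) the constraints $\Sigma'\subset M$, connectedness, and the curvature bound force $p$ away from $\bd^EM$ by a definite amount, after which the half-space argument of the previous paragraph applies. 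Making these vacuity and forcing statements precise, sign by sign, is the technical core; the case-(i) argument and the exclusion of false boundary and edge points are routine once the chart and the angle dichotomy are in place.
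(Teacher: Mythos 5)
Your setup and the first half of the argument track the paper's proof closely: the paper also works in the chart $\hat\phi_{p_0}$, uses the Gauss map bound $d_{x_0,y}<\pi/4$ from Lemma \ref{Lem: small curvature implies graphical}, and derives the contradictions in case (i) (and the exclusion of $\bd^E\Sigma'$ and of interior points on $\bd^{\kappa}M$ in cases (ii)--(iii)) exactly from the fact that the relevant normals are pinned to mutually orthogonal great subspheres of $\mS^n$. That part of your proposal is correct and is essentially the paper's argument.

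The gap is in the step you yourself flag as ``the technical core'': showing in cases (ii) and (iii) that $\bd^{F}\Sigma'$ cannot reach the opposite face $\bd^{-\kappa}M$. Your justification is that the tangent plane would have to ``rotate by a definite amount,'' but the premise behind that is false for wedge angles close to $\pi$. Take $\kappa=+$, so $N(x_0)\approx\nu_0=(\cos(\theta/2),\sin(\theta/2),0,\dots,0)$. At a point $y\in\bd^{F}\Sigma'\cap\bd^-\Omega$ the orthogonality condition only forces $N(y)$ to lie in the tangent space of $\bd^-\Omega$, and the spherical distance from $\nu_0$ to that subsphere is $\arccos|\cos\theta|$, which tends to $0$ as $\theta\to\pi$; so neither the $\pi/4$ Gauss-map bound nor ``continuity of tangent planes'' excludes such a point for $\theta\in(3\pi/4,\pi)$. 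Shrinking $r_0$ does not help either, since the configuration ($s\,|A_\Sigma|\le 1/4$ on a ball of radius comparable to $s$) is scale invariant. To close this one must also use the one-sided constraint $\Sigma'\subset\Omega$: writing $\widehat\Sigma'$ as a graph $u$ over the plane spanned by $e=(\sin(\theta/2),-\cos(\theta/2),0,\dots,0)$ and the edge directions, containment in $\Omega$ gives $u\ge t\cot\theta$ with equality exactly on $\bd^-\Omega$, while orthogonality at $y$ forces $u_t(y)=-\tan\theta$; since $u-t\cot\theta\ge0$ attains its minimum $0$ at $y$, its gradient there is a nonnegative multiple $\lambda$ of the inward normal of the graph domain with $\lambda\,|\langle n,\partial_t\rangle|=|\tan\theta+\cot\theta|=2/|\sin 2\theta|\ge 2$, whence $|\nabla u(y)|\ge 2-|\cot\theta|\ge 1$ for $\theta\ge 3\pi/4$, contradicting $|\nabla u|\le\sqrt2/2$. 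Only by combining this positional argument (for $\theta\ge 3\pi/4$) with your angle comparison (for $\theta<3\pi/4$) is the claim established; as written, your proposal asserts the conclusion without an argument that survives the regime $\theta$ near $\pi$. (Your observation that case (ii) is vacuous for $\theta<\pi/2$ is correct but does not touch this issue, which lives at the obtuse end of the range.)
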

\begin{proof}
    As in the proof of Lemma \ref{Lem: small curvature implies graphical}, let $\widehat{\Sigma}':=(\hat{\phi}_{p_0})^{-1}(\Sigma')$, and $N:\widehat{\Sigma}'\to \mS^{n+1}$ be the Gauss map, which satisfies $d_{x_0, y}<\frac{\pi}{4}$ for $x_0:=(\hat{\phi}_{p_0})^{-1}(p),y\in \widehat{\Sigma}'$. 
    Recall also that $\hat{g}$ is sufficiently $C^0$-close to the Euclidean metric $g_0$ (by shrinking $r_0>0$). 

    For the case $p\in\bd^E\Sigma'$, suppose there exists $q=\hat{\phi}_{p_0}(y)\in \bd^{FE}\Sigma'$ or $q=\hat{\phi}_{p_0}(z)\in \interior(\Sigma')\cap\bd^F M$. 
    Then by the orthogonally meeting condition and $\|\hat{g} - g_0\|_{C^0}<\epsilon$, $N(x_0)$ is sufficiently close to $\mS^{n-1}:=\{v\in T_{x_0}\bd^E\Omega: |v|_{g_0}=1\}=\{(0,0,x_3,\dots,x_{n+1})\in \mS^{n+1}\}$, while $N(y)$ and $N(z)$ are sufficiently close to $\{(x_1,x_2,0,\dots,0)\in\mS^{n+1}\}$ making $d_{x_0,y}$ and $d_{x_0,z}$ close to $\frac{\pi}{2}$, which is a contradiction. 

    For the case $p\in\bd^{FE}\Sigma'$, assume without loss of generality that a neighborhood of $p$ in $\bd^{F}\Sigma'$ contained in $\bd^+M_{p_0,r_0}$ (by Remark \ref{Rem: orthogonally meeting condition}), and thus $N(x_0) \approx (\cos(\frac{\theta_0}{2}), \sin(\frac{\theta_0}{2}),0,\dots,0)$.  
    Noting $d_{x_0, y}<\frac{\pi}{4}$ and $\theta(p)\geq \frac{\pi}{2}$ by assumptions, an argument similar to the above implies $\bd^E\Sigma'=\emptyset$, $\bd^F\Sigma'\subset \bd^+M_{p_0,r_0} $, and $\interior(\Sigma')\cap \bd^+M_{p_0,r_0} =\emptyset$, which gives (ii). 
    Finally, (iii) holds for the same reasons as (ii). 
\end{proof}

Before we state the main convergence result of this section, let us take a closer look at the free boundary minimal graph function $\hat{u}$ to facilitate our discussion. 
Due to the lack of Fermi coordinates, the orthogonally meeting boundary conditions of $\hat{u}$ are not as clean as represented in \cite[(6.1)]{guang2020curvature}. 
Nevertheless, $\hat{u}$ still satisfies the so-called {\em oblique derivative} boundary conditions (c.f. \cite{lieberman1988oblique}). 
Specifically, consider a FBMH $(\Sigma, \{\bd_m \Sigma\})\subset (M, \{\bd_{m+1} M\})$ and a point $p_0\in\bd^EM$ so that $\widehat{\Sigma}:=\hat{\phi}_{p_0}(\Sigma\cap\mB^L_{r}(p_0))$ is a graph of a function $\hat{u}$ over a domain in the plane $P\subset T_0\R^{n+1}$, where $(\phi_{p_0}, \mB^L_{r_0}(p_0), \Omega_{p_0})$ is a local model of $M$ near $p_0$. 
We still denote by $\hat{g}:=\hat{\phi}_{p_0}^*g_{_M}$ and $\Omega_{r}:=\Omega_{p_0}\cap\mB^{n+1}_{r}(0)$.

Firstly, suppose $P\in\Pcal_{\Omega}$ is a horizontal hyperplane (\ref{Eq: horizontal plane in domain}) and $\Sigma\cap \mB^L_{r}(p_0)$ is properly embedded in $M\cap \mB_r^L(p_0)$. 
By a further rotation, we may assume $P=\{x_{n+1}=0\}$ and
\[{\rm Graph}(\hat{u}) = \{(x_1,\dots,x_n, \hat{u}(x_1,\dots,x_n))\}.\]
Note $\hat{u}$ is defined on $ P\cap\Omega_r$ and $\bd^{\pm}\widehat{\Sigma} = {\rm Graph}(\hat{u}\llcorner(P\cap \bd^{\pm}\Omega_r))$ for $r>0$ small enough. 
By a computation similar to \cite[(7.11)]{colding2011course}, the upward pointing unit normal of ${\rm Graph}(\hat{u})$ is
\[\nu_{\widehat{\Sigma}}(x',\hat{u}(x')) = \frac{-\hat{u}_i\hat{g}^{ia} + \hat{g}^{n+1,a}}{\sqrt{\hat{g}^{n+1,n+1}-2\hat{u}_i\hat{g}^{i,n+1} + \hat{u}_i\hat{u}_j\hat{g}^{ij}}} (x',\hat{u}(x')) \frac{\partial}{\partial x_a}\Big|_{(x',\hat{u}(x'))} ,\]
where the summation convention is used, $i,j\in\{1,\dots,n\}$, $a,b\in\{1,\dots,n+1\}$, $x'=(x_1,\dots,x_n)$, $\hat{u}_i:=\frac{\partial \hat{u}}{\partial x_i}$, $\hat{g}_{ab}$ and $\hat{g}^{ab}$ are the representation of $\hat{g}$ and its inverse in the coordinates. 
Similarly, the inward unit normal of $\bd^{\pm}\Omega \subset \{x_1= \pm\cot(\theta_0/2)x_2\}$ is 
\[ -\nu_{\pm}  
~=~ \frac{\mp\cot(\frac{\theta_0}{2})\hat{g}^{2a} + \hat{g}^{1a}}{\sqrt{\hat{g}^{11} \mp 2\cot(\frac{\theta_0}{2})\hat{g}^{12} + \cot^2(\frac{\theta_0}{2})\hat{g}^{22}}} \frac{\partial}{\partial x_a} ,\]
where $\theta_0 := \theta(p_0)\in (0,\pi)$. 
Hence, the orthogonally meeting boundary condition implies 
\begin{equation}\label{Eq: boundary condition of FBMH on horizontal plane}
    \sum_{i=1}^n\beta_{\pm,\hat{u}}^i (x') \cdot \hat{u}_i(x')  = \beta_{\pm,\hat{u}}^{n+1}(x'), \qquad \forall x'\in P\cap \bd^{\pm}\Omega_r=\{x_{n+1}=0\}\cap \bd^{\pm}\Omega_r,
\end{equation}
where $\beta_{\pm,\hat{u}}^a(x'):=(\mp\cot(\frac{\theta_0}{2})\hat{g}^{2a} + \hat{g}^{1a})(x',\hat{u}(x'))$.

On the other hand, if $\theta_0(p_0)\geq \frac{\pi}{2}$, $P_{(\theta(p_0)-\pi)/2}\subset P$ is the vertical hyperplane (see (\ref{Eq: vertical hyperplane in wedge})) and $\Sigma\cap \mB^L_{r}(p_0)$ is properly embedded in $\tM^{+}_{p_0,r}$. 
Then, by an orthogonal linear transformation, we transform $\Omega$, $P$, and $\bd H_+$ into $\{x_{n+1}\geq \cot(\theta(p_0)) x_1, ~x_1\geq 0\}$, $\{x_{n+1}=0\}$, and $\{x_1 = 0\}$ respectively. 
Hence, $\widehat{\Sigma}={\rm Graph}(\hat{u})=\{(x_1,x_2,\dots,\hat{u}(x_1,\dots,x_{n}))\}$ again. 
Similarly, we see $\hat{u}$ is defined on $P\cap\Omega_r$ and $\bd\widehat{\Sigma}={\rm Graph}(\hat{u}\llcorner(P\cap\bd H_+))$ for $r>0$ small enough. 
As before, we get the boundary condition:
\begin{equation}\label{Eq: boundary condition of FBMH on vertical plane}
    \sum_{i=1}^{n} \beta^i_{+,\hat{u}} (x')\cdot \hat{u}_i(x') = \beta^{n+1}_{+,\hat{u}}(x'), \qquad \forall x'\in P\cap \bd H_+ = \{x_1=x_{n+1}=0 \},
\end{equation}
where $\beta^a_{+,\hat{u}} (x') := g^{1a}(\hat{u}(x'),x')$, $a=1,\dots,n+1$, and $x'=(x_1,\dots,x_{n})$. 



We now show the main result of this section, which is a convergence theorem for almost properly embedded locally wedge-shaped FBMHs with uniformly bounded area and the second fundamental form. 

\begin{theorem}\label{Thm: curvature bound implies convergence}
    Let $(\Sigma_i^n,\{\bd_m\Sigma_i\})\subset(M^{n+1},\{\bd_{m+1}M\})$ be a sequence of almost properly embedded, $C^{1,1}$-to-edge locally wedge-shaped, FBMHs so that 
    \[ \sup_{i}{\rm Area}(\Sigma_i)\leq C \qquad {\rm and} \qquad \sup_{i}\sup_{\Sigma_i}|A_{\Sigma_i}|\leq C \]
    for some constant $C>0$. 
    Then after passing to a subsequence, $(\Sigma_i,\{\bd_m\Sigma_i\})$ {converges $C^{1,\alpha}$-to-edge ($\forall \alpha\in (0,1)$), $C^\infty$ away from the edge, locally uniformly} to an almost properly embedded {$C^{1,1}$-to-edge} locally wedge-shaped hypersurface $(\Sigma,\{\bd_m\Sigma\})\subset(M,\{\bd_{m+1}M\})$ (with multiplicity) which is also minimal in $M$ with free boundary. 
\end{theorem}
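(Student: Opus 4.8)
\textbf{Proof proposal for Theorem \ref{Thm: curvature bound implies convergence}.}

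The plan is to carry out a local analysis near each point of $M$ and then patch the local limits together via a diagonal argument. Away from $\bd^E M$ the result is essentially Guang--Li--Zhou \cite{guang2020curvature}, so I would focus the work on points $p_0 \in \bd^E M$. Fix such a $p_0$ together with the local model $(\phi_{p_0}, \mB^L_{r_0}(p_0), \Omega_{p_0})$ supplied by Lemma \ref{Lem: local chart}, and work in the pulled-back picture $\widehat\Sigma_i := (\hat\phi_{p_0})^{-1}(\Sigma_i \cap \mB^L_{r_0}(p_0))$ inside $\Omega_{r_0}$ with metric $\hat g$. The uniform curvature bound $|A_{\Sigma_i}| \le C$ together with Lemma \ref{Lem: small curvature implies graphical} (applied at a scale $s$ with $C^2 \le 1/(16 s^2)$) gives, for each point $q \in \widehat\Sigma_i$, a uniform lower bound $s_0$ on the size of the connected component of $\mB^L_{s_0}(q) \cap \widehat\Sigma_i$ through $q$, and expresses it as a graph of a $C^{1,1}$ function $\hat u_i^{(q)}$ over $T_q\widehat\Sigma_i$ with $|\nabla \hat u_i^{(q)}| \le 1$ and $|\Hess \hat u_i^{(q)}| \le (1+C_0)/(\sqrt 2 s)$. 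Lemma \ref{Lem: properness of graph} classifies how each such graph sits relative to $\bd^E M$ and the faces $\bd^\pm M$, so every sheet is properly embedded in one of $M_{p_0,r_0}$, $\tM^+_{p_0,r_0}$, or $\tM^-_{p_0,r_0}$.

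The next step is to upgrade the uniform $C^{1,1}$ control to the convergence asserted. Each graph function $\hat u_i$ satisfies the minimal surface equation in the interior (hence standard interior Schauder estimates give $C^\infty$ bounds on compact subsets away from the edge after the usual bootstrap from the $C^{1,1}$ bound) and, along the boundary portions lying in $\bd^\pm \Omega$ or $\bd H_\pm$, the oblique derivative boundary conditions \eqref{Eq: boundary condition of FBMH on horizontal plane} and \eqref{Eq: boundary condition of FBMH on vertical plane}. The coefficients $\beta^a_{\pm,\hat u_i}$ depend on $\hat u_i$ and on $\hat g$ only, and the obliqueness constant is controlled uniformly because $\theta_0 = \theta(p_0) \in (0,\pi)$ is fixed and $|\nabla \hat u_i| \le 1$; thus the linear oblique-derivative Schauder theory of Lieberman \cite{lieberman1988oblique}, combined with the boundary elliptic estimates from \cite[Appendix C]{mazurowski2023min}, gives uniform $C^{1,\alpha}$ estimates up to and including the edge for every $\alpha \in (0,1)$, and $C^{k,\alpha}$ estimates on any compact subset disjoint from $\bd^E\widehat\Sigma$. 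By Arzel\`a--Ascoli, after passing to a subsequence each family of local graphs converges: $C^{1,\alpha}$-to-edge for every $\alpha$, and $C^\infty$ away from the edge. Uniform local area bounds (which transfer to the $\Sigma_i$ from the hypothesis $\mathrm{Area}(\Sigma_i) \le C$ via the monotonicity formula, Proposition \ref{Prop: monotonicity fomula}) bound the number of sheets passing through a fixed small ball, so the local limit $\widehat\Sigma$ is a finite union of such limit graphs, carrying a well-defined integer multiplicity, and is an almost properly embedded $C^{1,1}$-to-edge locally wedge-shaped hypersurface. The graph equation and the boundary conditions pass to the limit, so $\widehat\Sigma$ is again a FBMH (the orthogonal-meeting conditions of Proposition \ref{Prop: classify FBMH} are preserved because each is a closed condition on $C^1$ data, and the delicate non-example direction — a face of $\Sigma$ sitting in $\bd^E M$ but not meeting a face orthogonally — is excluded by Lemma \ref{Lem: properness of graph} applied in the limit). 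Finally, a countable cover of $M$ by such balls and a diagonal subsequence produce a single limiting $(\Sigma,\{\bd_m\Sigma\})$ with the claimed convergence.

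I expect the main obstacle to be the boundary analysis at the edge: unlike the classical free boundary case there are no Fermi coordinates, the boundary condition is only an oblique derivative condition with coefficients varying with $\hat u_i$ and $\hat g$, and one must verify uniform (in $i$) obliqueness and uniform regularity of these coefficients, then invoke a Schauder-type estimate for oblique problems in a wedge that is stable under the degeneration $\bd^+\Omega \cup \bd^-\Omega$ meeting at the edge. Two sub-issues within this: (a) handling simultaneously the two cases where the limit sheet is properly embedded in $M$ versus in one of the one-sided extensions $\tM^\pm$ (so that a priori the graph lives over a horizontal hyperplane versus a vertical half-plane, with correspondingly different boundary portions), which requires a case split governed by Lemma \ref{Lem: properness of graph}; and (b) ensuring the limit does not develop the pathological configuration of Figure 2(B) — this is where one uses both the a priori $C^{1,1}$-to-edge regularity and the fact that the set $S_0$ of Proposition \ref{Prop: classify FBMH} must be empty. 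The monotonicity formula at edge points (Proposition \ref{Prop: monotonicity fomula}) is the other ingredient one must be careful to have available, since it is what converts the global area bound into the uniform local bounds needed for the multiplicity count.
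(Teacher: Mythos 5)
Your overall strategy matches the paper's: reduce to points of $\bd^E M$, use Lemma \ref{Lem: small curvature implies graphical} to write each sheet as a uniformly controlled $C^{1,1}$ graph at a definite scale, use Lemma \ref{Lem: properness of graph} to classify how each sheet sits relative to $M_{p_0,r_0}$ and the extensions $\tM^{\pm}_{p_0,r_0}$, use the monotonicity formula to bound the number of sheets, and pass to the limit with Arzel\`a--Ascoli plus the oblique-derivative elliptic theory. (One small simplification you could make: the uniform $C^{1,\alpha}$-to-edge compactness for all $\alpha<1$ already follows from the uniform Hessian bound of Lemma \ref{Lem: small curvature implies graphical} via Arzel\`a--Ascoli; you do not need a Schauder estimate up to the edge for the wedge problem, which is delicate and angle-dependent, only interior and smooth-boundary Schauder for the $C^\infty$ convergence away from the edge.)

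There is, however, one genuine gap. You assert that ``the local limit $\widehat\Sigma$ is a finite union of such limit graphs \dots and is an almost properly embedded $C^{1,1}$-to-edge locally wedge-shaped hypersurface,'' but you never rule out that two \emph{distinct} limit sheets intersect each other. A priori, two limit graphs $\Sigma^1={\rm Graph}(\hat u^1)$ and $\Sigma^2={\rm Graph}(\hat u^2)$ arising from different connected components of $\Sigma_i$ could touch tangentially or cross, in which case their union is not an embedded hypersurface and the conclusion fails. The paper devotes the final part of its proof to exactly this point: if $q\in\Sigma^1\cap\Sigma^2$ lies away from both edges, a transversality argument plus the classical maximum principle for FBMHs forces $\Sigma^1=\Sigma^2$; if $q$ is an edge point of one sheet but not the other, the orthogonality conditions of Proposition \ref{Prop: classify FBMH} (see Remark \ref{Rem: orthogonally meeting condition}) force the tangent planes to be transversal, contradicting embeddedness of $\Sigma_i$ for large $i$; and if $q\in\bd^E\Sigma^1\cap\bd^E\Sigma^2$, one sheet lies on one side of the other and one must invoke the maximum principle for FBMHs in wedge domains (Theorem \ref{Thm: maximum principle}, proved in the appendix via Lieberman's oblique-derivative maximum principle) to conclude $\Sigma^1=\Sigma^2$. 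This last case cannot be dispatched by the standard Hopf/boundary maximum principle because the touching point sits on the edge, which is why the paper needs the separate wedge maximum principle. Your proposal contains no substitute for this step, so the embeddedness (and hence the well-definedness of the limit as an almost properly embedded hypersurface with multiplicity) is not established.
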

\begin{proof}
    Let $V\in \V_n(M)$ be the varifolds limit of $|\Sigma_i|$ (up tp a subsequence), and $\Sigma := \spt(\|V\|)$. 
    By the monotonicity formula (Proposition \ref{Prop: monotonicity fomula}), the convergence is also in the Hausdorff distance, and thus $\lim_{i\to\infty}\bd^E\Sigma_i \subset \bd^EM$. 
    Additionally, \cite[Theorem 6.1]{guang2020curvature} gives the smoothly convergence and the regularity of $\Sigma$ in $M\setminus\bd^E M$. We now consider the convergence near any $p_0\in\Sigma\cap\bd^EM$. 

    Use the notations $r_0>0$, $(\phi_{p_0}, \mB^L_{r_0}(p_0), \Omega_{p_0})$, $\Omega_{r}:=\Omega_{p_0}\cap\mB^{n+1}_{r}(0)$, $\hat{g}:=\hat{\phi}_{p_0}^*g_{_M}$ and $s_0:=\frac{s}{1+C_0}$ as in Lemma \ref{Lem: small curvature implies graphical}, where $s>0$ is small enough so that $C<\frac{1}{4s}$ and $s<\min\{\frac{r_0}{4}, \frac{1}{8}\}$. 
    Let $\{\Sigma_i^{j}\}_{j=1}^{k_i}$ be the connected component of $\Sigma_i\cap \mB^L_{s_0/2}(p_0)$. 
    By the monotonicity formula (Proposition \ref{Prop: monotonicity fomula}), we see $k_i$ is uniformly bounded, and thus $k_i\equiv k$ up to a subsequence. 
    Additionally, the Hausdorff topology convergence indicates the existence of $p_i^{1}\in \Sigma_i$ with $\lim_{i\to\infty}p_i^{1} = p_0$. 
    In particular, we can assume without loss of generality that $p_i^1\in \Sigma_i^1$, $|p_i^1 - p_0|<\frac{s_0}{2}$, and either
    \begin{itemize}
        \item[(1)] $p_i^1\in \interior(\Sigma_i)$ for all $i\geq 1$; or
        \item[(2)] $p_i^1\in \bd^F\Sigma_i\cap \bd^{\kappa} M_{p_0,r_0}$ for all $i\geq 1$, where $\kappa\in\{\pm\}$; or
        \item[(3)] $p_i^1\in \bd^E\Sigma_i$ for all $i\geq 1$.
    \end{itemize}
    Hence, $\Sigma_i^1$ is contained in the connected component of $\Sigma_i\cap \mB^L_{s_0}(p_i^1)$ containing $p_i^1$. 
    It then follows from Lemma \ref{Lem: small curvature implies graphical} that $\widehat{\Sigma}_i^1 := (\hat{\phi}_{p_0})^{-1}(\Sigma_i^1)$ is a graph of function $\hat{u}_i^1$ on $T_{x_i^1}\widehat{\Sigma}_i^1$ so that {$\|\hat{u}_i^1\|_{C^{1,1}}$} is uniformly bounded, where $x_i^1:=(\hat{\phi}_{p_0})^{-1}(p_i^1)$. 
    Denote by $P_i$ the hyperplane containing $T_{x_i^1}\widehat{\Sigma}_i^1$, which tends to a hyperplane $P$ at $0$ (up to a subsequence). 
    Now we separate the discussion into the above three cases. 

    {\bf Case (1):} $p_i^1\in \interior(\Sigma_i)$ for all $i\geq 1$. 
    Then $P_i = T_{x_i^1}\widehat{\Sigma}_i^1\cong \R^n \to P$ as $i\to\infty$. 
    Since $P$ is transversal to at least one of $\bd^{\pm}\Omega$ at $0$, $\interior(\widehat{\Sigma}_i^1)$ contains points outside of $\Omega$ for $i$ large enough, which is a contradiction. 

    {\bf Case (2):} $p_i^1\in \bd^F\Sigma_i\cap \bd^{\kappa} M_{p_0,r_0}$ for all $j\geq 1$, where $\kappa\in\{\pm\}$. Assume $\kappa=+$ for simplicity. 
    Then in the coordinates introduced before (\ref{Eq: boundary condition of FBMH on vertical plane}), $T_{x_i^1}\widehat{\Sigma}_i^1 = P_i\cap\{x_1\geq 0\}$ is a half-plane normal to $\bd^+\Omega$ at $x_i^1$ (under the metric $\hat{g}$). 
    Thus, $P$ is a hyperplane normal to $\bd^+\Omega$ at $0$. 
    For the same reason as in Case (1), we must have $P=\{x_{n+1}=0\}$ and $\theta(p_0)\geq \frac{\pi}{2}$. 
    By Lemma \ref{Lem: properness of graph}(ii)(iii), 
    we can apply the standard elliptic PDE theory with oblique derivative boundary conditions (\ref{Eq: boundary condition of FBMH on vertical plane}) in an $r$-neighborhood of $0\in\{x_1\geq 0\}$, and see ${\rm Graph}(\hat{u}_i^1)$ converges smoothly and graphically to a FBMH {\em without edge} $({\rm Graph}(\hat{u}^1), \bd {\rm Graph}(\hat{u}^1))\subset (\{x_{n+1}\geq 0\}, \{x_{n+1}=0\})$, for some $\hat{u}^1\in C^{\infty}(P\cap \{x_1\geq 0\}\cap \mB_r(0))$. 
    Since $({\rm Graph}(\hat{u}_i^1),\bd {\rm Graph}(\hat{u}_i^1))\subset(\Omega, \bd^+\Omega)$, we also have $({\rm Graph}(\hat{u}^1),\bd {\rm Graph}(\hat{u}^1))\subset(\Omega, \bd^+\Omega)$, whose interior may touch with $\bd^-\Omega$ from inside. 

    {\bf Case (3):} $p_i^1\in \bd^E\Sigma_i$ for all $i\geq 1$. 
    Now, $P_i$ is a hyperplane perpendicular to $\bd^{\pm}\Omega$ at $x_i^{1}$ under the metric $\hat{g}$, and thus we can write $P=\{x_{n+1}=0\}$ in the coordinates introduced before (\ref{Eq: boundary condition of FBMH on horizontal plane}). 
    Again, 
    we have the graphical limit $({\rm Graph}(\hat{u}^1),\{\bd_m {\rm Graph}(\hat{u}^1)\})\subset(\Omega_r, \{\bd_{m+1} \Omega_r\})$ for some { $\hat{u}^1\in C^{\infty}(P\cap \Omega_r\setminus \bd^E\Omega)\cap C^{1,1}(P\cap \Omega_r)$.  
    Note the convergence is $C^{1,\alpha}$-uniformly for $\alpha\in(0,1)$, while the limit is $C^{1,1}$ by a standard argument.}

    Next, for any other $p_i^{2}\in\Sigma_i^2$ with $\lim_{i\to\infty}p_i^2 = q_0\in\spt(\|V\|)$. 
    The above arguments show that $\Sigma_{i}^2$ converges graphically to some almost properly embedded locally wedge-shaped FBMH $\Sigma^2={\rm Graph}(u^2)\subset \spt(\|V\|)$ in a neighborhood of $q_0$. 
    Suppose $q\in\Sigma^1\cap\Sigma^2\neq\emptyset$. It remains to show that $\Sigma^1=\Sigma^2$. 
    
    For the case that $q\notin \bd^E\Sigma^1 \cup \bd^E\Sigma^2$, 
    the standard transversal argument and the classical maximum principle for FBMHs indicate $\Sigma_1=\Sigma_2$. 
    
    If $q\in \bd^E\Sigma^1\cap (\Sigma^2\setminus\bd^E\Sigma^2)$ or $q\in \bd^E\Sigma^2\cap (\Sigma^1\setminus\bd^E\Sigma^1)$, it follows from the orthogonal meeting conditions Proposition \ref{Prop: classify FBMH}(2)(3) (see also Remark \ref{Rem: orthogonally meeting condition}) that $T_q\Sigma^1$ is transversal to $T_q\Sigma^2$, which contradicts the embeddedness of $\Sigma_i$ for $i$ large enough. 
    
    If $q\in \bd^E\Sigma^1\cap \bd^E\Sigma^2$, then by Remark \ref{Rem: orthogonally meeting condition} and a transversal argument, $\Omega':=T_q\Sigma^1=T_q\Sigma^2=P\cap T_pM$ for some horizontal hyperplane $P\in\Pcal_{T_pM}$, and $\Sigma^1$ lies on one side of $\Sigma^2$ in a neighborhood of $q$. 
    The maximum principle (Theorem \ref{Thm: maximum principle}) then suggests $\Sigma_1=\Sigma_2$. 
    This proved the theorem. 
\end{proof}

\section{Bernstein-type theorem in Euclidean wedge domains} 
\label{sec:Bernstein}

The goal of this section is to prove a Bernstein-type theorem for stable almost properly embedded locally wedge-shaped FMBHs.  We obtain the strongest statement when the wedge angle satisfies $\theta \le \pi/2$. For angles larger than $\pi/2$, the existence of a touching set complicates the application of the stability inequality. Nevertheless, for angles larger than $\pi/2$ we can still obtain a Bernstein theorem under the assumption that the touching set is empty or it satisfies a stronger stability assumption. 

To begin with, we study the touching phenomenon for FBMHs in Euclidean wedges with acute or right wedge angle. 
\begin{proposition}
\label{prop:acute-touching-set}
Let $\Omega^{n+1} = H_+^{n+1} \cap H_-^{n+1}$ be an $(n+1)$-dimensional wedge domain in $\R^{n+1}$ with wedge angle $\theta < \pi/2$.  Then any $C^{1,1}$-to-edge almost properly embedded FBMH $(\Sigma,\{\bd_k\Sigma\}) \subset (\Omega,\{\bd_{k+1}\Omega\})$ is properly embedded. 
\end{proposition}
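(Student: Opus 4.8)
The plan is to rule out the two possible failures of proper embeddedness: a false edge point $p \in \bd^F\Sigma \cap \bd^E\Omega$, and a false boundary point $p \in \interior(\Sigma)\cap \bd^F\Omega$. In both cases the strategy is to examine the tangent cone $T_p\Sigma$, use the orthogonal-meeting conditions from Proposition \ref{Prop: classify FBMH} together with Remark \ref{Rem: orthogonally meeting condition}, and derive a contradiction from the acute angle $\theta < \pi/2$. Since $\Omega$ is a flat Euclidean wedge, the tangent cone analysis is clean: $T_p\Sigma$ is a minimal cone in $T_p\Omega$, and the $C^{1,1}$-to-edge hypothesis guarantees $T_p\Sigma$ is a genuine (half-)hyperplane rather than a more singular cone.

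First I would handle a putative false edge point $p$. By \eqref{Eq: classification tangent cone} and Remark \ref{Rem: orthogonally meeting condition}, at such a $p$ we have $T_p\Sigma = P_\alpha(p)$ for some $\alpha \in [-\theta/2,\theta/2]$, and the orthogonal-meeting condition (3) forces $\eta \perp \bd^+\Omega$ or $\eta \perp \bd^-\Omega$ along the face of $\Sigma$ near $p$; concretely $T_p\Sigma = P_{\pm(\theta-\pi)/2}$. But $P_{(\theta-\pi)/2}$ makes angle $(\theta-\pi)/2$ with the reference plane $\{x_2 = 0\}$, and its distance in angle from the wall $\bd^{\mp}\Omega = P_{\mp\theta/2}$ is $|(\theta-\pi)/2 - (\mp\theta/2)|$. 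When $\theta < \pi/2$ this plane $P_{\pm(\theta-\pi)/2}$ lies \emph{outside} the angular sector $[-\theta/2,\theta/2]$ (indeed $(\pi-\theta)/2 > \theta/2$ exactly when $\theta < \pi/2$), so a neighborhood of $p$ in the graph of $\Sigma$ over $P_{\pm(\theta-\pi)/2}$ would contain points outside $\Omega$ — contradicting $\Sigma \subset \Omega$. Hence $\bd^F\Sigma \cap \bd^E\Omega = \emptyset$, i.e. $\bd^E\Sigma = \bd^E\Omega \cap \Sigma$ and $\bd^F\Sigma \subset \bd^F\Omega$.

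Next I would rule out a false boundary point $p \in \interior(\Sigma)\cap\bd^F\Omega$, say $p\in \bd^+\Omega$. Here $T_p\Sigma$ is a full hyperplane tangent to $\bd^+\Omega$ along... no: $\interior(\Sigma)$ touching the face tangentially means $T_p\Sigma = \bd H_+ \cap T_p\Omega$, the half-hyperplane that \emph{is} the face $\bd^+\Omega$ (extended). But then $\Sigma$ near $p$ is a graph over $\bd^+\Omega$ lying inside $\Omega = H_+\cap H_-$, so locally it lies in $H_+$ on the correct side, which is consistent; the contradiction instead comes from the fact that $\bd^+\Omega$ also meets the other wall $\bd^-\Omega$ at angle $\theta<\pi/2$, so the face $\bd^+\Omega$ itself exits $H_-$ except along the edge — forcing any piece of $\Sigma$ graphical over a neighborhood in $\bd^+\Omega$ and staying in $\Omega$ to actually coincide with $\bd^E\Omega$ in a set of full measure, which is absurd for an $n$-dimensional $\Sigma$. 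More carefully: a false boundary point in the interior of $\bd^F\Omega$ is already excluded by the standard (flat) boundary maximum principle for free boundary minimal hypersurfaces meeting $\bd^+\Omega$ orthogonally, since $\Sigma$ on one side of $\bd^+\Omega$ and the totally geodesic $\bd^+\Omega$ itself would have an interior contact point. I would invoke this boundary maximum principle, so that $\interior(\Sigma) \subset \interior(\Omega)$, giving proper embeddedness.

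The main obstacle I anticipate is the case $\theta = \pi/2$, which the proposition's hypothesis $\theta < \pi/2$ deliberately excludes: when $\theta = \pi/2$ the plane $P_{(\theta-\pi)/2} = P_{-\pi/4}$ sits exactly at the wall $\bd^-\Omega = P_{-\pi/4}$, so the angular argument in the false-edge case degenerates and one genuinely cannot rule out touching without the extra hypothesis \eqref{ddag}. For the strict inequality, the only real care needed is to make rigorous the passage from ``$T_p\Sigma$ lies outside the sector'' to ``$\Sigma$ leaves $\Omega$'', which uses the $C^{1,1}$-to-edge graphical description of $\Sigma$ near $p$ — exactly the kind of estimate provided by Lemma \ref{Lem: small curvature implies graphical} and Lemma \ref{Lem: properness of graph}(ii), and I would cite those rather than redo the computation.
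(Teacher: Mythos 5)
Your treatment of the false edge points is exactly the paper's argument, just with the angular computation spelled out: condition (3) of Proposition \ref{Prop: classify FBMH} forces $T_p\Sigma = P_{\pm(\theta-\pi)/2}$ at any $p\in\bd^{FE}\Sigma$, and $(\pi-\theta)/2>\theta/2$ for $\theta<\pi/2$ puts this half-plane outside the sector, so $\bd^{FE}\Sigma=\emptyset$. That part is fine.

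For the false boundary points there is a small but genuine gap in your logic. You write that an interior contact point with $\bd^+\Omega$ is ``already excluded by the standard boundary maximum principle,'' but the strong maximum principle does not exclude the contact point --- its conclusion is that the two tangent minimal hypersurfaces \emph{coincide}, i.e.\ $\Sigma=\bd^+\Omega$ (on the relevant connected component). That is not yet a contradiction: $\bd^+\Omega$ is a perfectly good minimal hypersurface sitting inside $\Omega$. The missing step, which is where the hypothesis $\theta<\pi/2$ enters a second time, is to observe that $\bd^+\Omega$ is \emph{not} a free boundary minimal hypersurface in the sense of Definition \ref{definition:FBMH}: its face lies on $\bd^E\Omega$ and its outward co-normal there is perpendicular to neither $\bd^+\Omega$ nor $\bd^-\Omega$ when $\theta<\pi/2$ (equivalently, it would have $\bd^{FE}\neq\emptyset$, which your first step already forbids). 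This is precisely the point emphasized in the paper's Remark 1.4 about the vertical plane not qualifying as stationary. Your earlier heuristic about $\bd^+\Omega$ ``exiting $H_-$ except along the edge'' does not work --- as you yourself note, near an interior face point a graph over $\bd^+\Omega$ stays in $\Omega$ --- so the definitional step cannot be skipped. With that one sentence added, your proof matches the paper's.
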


\begin{proof}
    We need to show that the touching set $\mathcal S(\Sigma) = (\operatorname{int}(\Sigma)\cap \bd^F\Omega)\cup \bd^{FE}\Sigma$ is empty. According to Proposition \ref{Prop: classify FBMH}, the unit co-normal $\eta$ to $\bd^F \Sigma$ satisfies $\eta \perp \bd^+\Omega$ or $\eta\perp \bd^-\Omega$ at all points $p\in \bd^{FE}\Sigma$. Since the wedge angle $\theta$ is less than $\pi/2$, this implies that $\bd^{FE}\Sigma = \emptyset$. 

    Now suppose for contradiction there is a point $p\in \operatorname{int}(\Sigma)\cap \bd^F \Omega$. Without loss of generality, we can assume that $p\in \bd^+ \Omega$.  Since $\Sigma$ lies to one side of $\bd H_+$, the maximum principle implies that $\Sigma = \bd^+ \Omega$. This is not a free boundary minimal hypersurface according to Definition \ref{definition:FBMH} and $\theta<\pi/2$, and so this  is impossible. 
\end{proof}

\begin{proposition}
\label{prop:right-touching-set}
Let $\Omega^{n+1} = H_+^{n+1} \cap H_-^{n+1}$ be an $(n+1)$-dimensional wedge domain in $\R^{n+1}$ with wedge angle $\theta = \pi/2$.  Then any $C^{1,1}$-to-edge almost properly embedded FBMH $(\Sigma,\{\bd_k\Sigma\}) \subset (\Omega,\{\bd_{k+1}\Omega\})$ with non-empty touching set is flat. 
\end{proposition}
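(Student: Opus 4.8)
The plan is to show that a non-empty touching set forces $\Sigma$ to contain an open piece of a hyperplane, and then to upgrade this to global flatness by real analyticity of minimal hypersurfaces, concluding $\Sigma = \Omega\cap P$ for an $n$-plane $P$. Recall that the touching set is $\mathcal S(\Sigma) = (\interior(\Sigma)\cap\bd^F\Omega)\cup\bd^{FE}\Sigma$, so I will split into the two cases according to which piece is non-empty.

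First I would treat the case of a false boundary point $p\in\interior(\Sigma)\cap\bd^F\Omega$, say with $p\in\bd^+\Omega$. Let $\Pi^+\supset\bd^+\Omega$ be the ambient hyperplane; after an isometry of $\R^{n+1}$ we may assume $\Pi^+=\{x_1=0\}$ and $\Omega\subset\{x_1\ge 0\}$. Then the coordinate function $x_1$ is nonnegative on $\Sigma$, is harmonic on $\interior(\Sigma)$ (since $\Sigma$ is minimal), and vanishes at the interior point $p$. The strong maximum principle gives $x_1\equiv 0$ on the component of $\interior(\Sigma)$ through $p$, and then real analyticity of minimal hypersurfaces yields $\Sigma\subset\Pi^+$, i.e.\ $\Sigma=\Omega\cap\Pi^+=\bd^+\Omega$, which is flat.

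Next I would treat the case of a false edge point $p\in\bd^{FE}\Sigma=\bd^F\Sigma\cap\bd^E\Omega$. By Remark \ref{Rem: orthogonally meeting condition} (valid since $\theta=\pi/2<\pi$), after relabeling the faces there is $r>0$ with $\bd\Sigma\cap\mB^{n+1}_r(p)\subset\bd^+\Omega$, and $\Sigma$ meets $\bd^+\Omega$ orthogonally along $\bd^F\Sigma$ near $p$. Since $p\notin\bd^E\Sigma$ and $\bd^E\Sigma$ is closed and disjoint from $\bd^F\Sigma$, the surface $\Sigma$ is a smooth hypersurface-with-boundary near $p$, so the classical reflection principle applies: writing $R$ for the Euclidean reflection across $\Pi^+$, the set $\widetilde\Sigma := \big(\Sigma\cup R(\Sigma)\big)\cap\mB^{n+1}_r(p)$ is a smooth minimal hypersurface without boundary, and $p\in\interior(\widetilde\Sigma)$. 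Let $\Pi^-\supset\bd^-\Omega$ be the other ambient hyperplane. Because $\theta=\pi/2$, the unit normals of $\Pi^+$ and $\Pi^-$ are orthogonal, so $R$ maps $\Pi^-$ to itself and preserves each of its two closed half-spaces; hence $R(\Omega)$ lies in the same closed half-space of $\Pi^-$ as $\Omega$, and $\Omega\cup R(\Omega)$ equals that half-space. Consequently $\widetilde\Sigma$ lies on one side of $\Pi^-$, and since $p\in\bd^E\Omega\subset\Pi^-$ it touches $\Pi^-$ at the interior point $p$. As $\Pi^-$ is a minimal hyperplane, the strong maximum principle forces $\widetilde\Sigma\subset\Pi^-$ near $p$, hence $\Sigma\subset\Pi^-$ near $p$; real analyticity then gives $\Sigma=\Omega\cap\Pi^-=\bd^-\Omega$, again flat.

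The main obstacle is the reduction in the second case: one must verify that reflecting the right-angled wedge $\Omega$ across one of its faces produces a half-space — this is precisely where the hypothesis $\theta=\pi/2$ is used — and that the doubled surface $\widetilde\Sigma$ is genuinely smooth at $p$, for which it is essential that $p$ is a face point of $\Sigma$ and not an edge point of $\Sigma$, so that $\Sigma$ is smooth up to its boundary near $p$ and the reflection principle is available. (If $\theta>\pi/2$, $\Omega\cup R(\Omega)$ would only be a wedge of angle $2\theta-\pi>0$ rather than a half-space, which is why the acute/right cases are singled out.)
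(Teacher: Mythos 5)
Your proof is correct and follows essentially the same route as the paper: the false boundary point case is handled identically by the strong maximum principle, and in the false edge point case the paper likewise reduces to $\Sigma$ and $\bd^-\Omega$ being tangent one-sided FBMHs in the half-space $H_+$ and invokes the maximum principle for FBMHs, which your explicit reflection across $\Pi^+$ (using $\theta=\pi/2$ so that $\Omega\cup R(\Omega)=H_-$) simply implements by hand.
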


\begin{proof}
    As in the previous proposition, the maximum principle implies that if $\operatorname{int}(\Sigma) \cap \bd^F\Omega$ is non-empty then $\Sigma$ coincides with either $\bd^+ \Omega$ or $\bd^- \Omega$. 

    Now suppose there is a point $p\in \bd^{FE}\Sigma$. Then according to Proposition \ref{Prop: classify FBMH}, the unit co-normal $\eta$ to $\bd^F \Sigma$ satisfies $\eta(p) \perp \bd^+\Omega$ or $\eta(p)\perp \bd^-\Omega$. Without loss of generality, suppose that $\eta(p)\perp \bd^+\Omega$. 
    {Then by Remark \ref{Rem: orthogonally meeting condition} and $\theta=\pi/2$, we see $\Sigma$ and $\bd^-\Omega$ are two FBMHs in a small neighborhood of $p$ in $H_+$ with no edge point, and $\Sigma$ meets $\bd^-\Omega$ tangentially at $p$ from one side. 
    Hence, $\Sigma=\bd^-\Omega$ is flat by the maximum principle for FBMHs.} 
\end{proof}

\begin{theorem}\label{Thm: Bernstein theorem}
    Let $3\leq n+1\leq 6$, and $\Omega^{n+1}$ be an $(n+1)$-dimensional wedge domain in $\R^{n+1}$ with wedge angle $\theta \le \pi/2$. Assume that $(\Sigma,\{\bd_k\Sigma\})$ is a $C^{1,1}$-to-edge almost properly embedded FBMH in $(\Omega,\{\bd_{k+1}\Omega\})$ which is stable according to Definition \ref{definition:stability}. Further assume that $\Sigma$ has Euclidean area growth: there is a constant $C > 0$ such that 
    \[
    \mathcal H^n(\Sigma\cap \mathbb {B}^{n+1}_r(0))\le Cr^n
    \]
    for all $r > 0$. Then $\Sigma = \Omega \cap P$ where $P\subset \R^{n+1}$ is an $n$-plane. If $\theta < \pi/2$, then $P$ is {a horizontal hyperplane of $\Omega$ (\ref{Eq: horizontal plane in domain}) after a translation along $\bd^E\Omega$.}
\end{theorem}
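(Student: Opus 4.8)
The plan is to follow Schoen--Simon--Yau's original argument for the Bernstein theorem for stable minimal hypersurfaces, but carried out \emph{inside} the wedge $\Omega$ rather than in all of $\R^{n+1}$, and to control the boundary terms that arise by exploiting the free boundary condition together with a local reflection across the faces $\bd^\pm\Omega$. First I would reduce to the case that $\Sigma$ is properly embedded: when $\theta\le\pi/2$, Propositions~\ref{prop:acute-touching-set} and~\ref{prop:right-touching-set} say that either $\Sigma$ is properly embedded or $\Sigma$ is flat (equal to $\bd^\pm\Omega$), and in the latter case the conclusion already holds. So assume $\Sigma$ is properly embedded, $2$-sided with unit normal $\nu$, stable, and has Euclidean volume growth.

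Next I would set up the Schoen--Simon--Yau machinery. The key analytic input is the stability inequality $\int_\Sigma |A|^2 \varphi^2 \le \int_\Sigma |\nabla\varphi|^2$ for all admissible $\varphi$; here $\varphi = f\nu$ must come from a vector field in $\mathfrak X(M,\Sigma)$, so $\varphi$ must extend to a vector field on $\Omega$ that is tangent to the faces along $\bd^F\Sigma$ (and the co-normal $\eta\perp\bd^\pm\Omega$ condition is what makes $f\nu$ extendable while remaining tangent to the face). The classical SSY proof plugs $\varphi = |A|^{1+q}\psi$ (for a suitable small power $q$) into the stability inequality, uses the Simons identity $\Delta |A|^2 \ge -2|A|^4 + 2(1+2/n)|\nabla|A||^2$ together with a Michael--Simon Sobolev inequality, and integrates by parts several times. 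In the wedge, each integration by parts produces a boundary term over $\bd^F\Sigma \subset \bd^+\Omega\cup\bd^-\Omega$. The crux is that along $\bd^F\Sigma$ the hypersurface meets the flat face orthogonally, so by a \emph{local} reflection across $\bd^\pm\Omega$ the surface $\Sigma$ extends to a smooth minimal hypersurface $\widehat\Sigma$ (doubled across the face), $|A|$ extends evenly, and the normal derivative $\bd_\eta |A|^2$ vanishes along $\bd^F\Sigma$; hence every boundary term of the form $\int_{\bd^F\Sigma}\psi^2 |A|^{2q}\bd_\eta|A|$ or $\int_{\bd^F\Sigma}\laa \nabla(\cdots),\eta\raa$ vanishes. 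Equivalently, one may simply run the entire SSY argument on the (locally) reflected surface $\widehat\Sigma$, which is a genuine stable minimal hypersurface without boundary in $\R^{n+1}$ with Euclidean volume growth, and invoke the Schoen--Simon--Yau curvature estimate / Bernstein theorem directly. The subtlety is that the reflection is only \emph{local} near each face and the two faces are not parallel, so one cannot reflect globally; instead I would use a cutoff $\psi$ supported in $\mB_R(0)$ and observe that within such a ball the needed integration-by-parts identities, after reflection across whichever face is relevant, hold with no boundary contribution, then let $R\to\infty$ using Euclidean volume growth exactly as in SSY.

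Carrying this through, the stability inequality with the SSY test function yields $\int_{\Sigma\cap\mB_R}|A|^{2+2q}\le C R^{-2}\int_{\Sigma\cap\mB_{2R}}|A|^{2q}\cdot(\text{lower order})$, and combined with the Michael--Simon--Sobolev inequality on $\Sigma$ (valid for the free boundary setting since $\Sigma$ meets $\bd\Omega$ orthogonally, or again by reflection) one gets, for $n+1\le 6$ and $q$ in the SSY range, that $\int_{\Sigma\cap\mB_R}|A|^{2+2q} \to 0$ as $R\to\infty$, forcing $A\equiv 0$. Hence $\Sigma$ is a piece of an $n$-plane $P$, and since $\Sigma$ is properly embedded in $\Omega$ we get $\Sigma = \Omega\cap P$. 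For the last sentence: if $\theta<\pi/2$, then $\bd^{FE}\Sigma=\emptyset$ by Proposition~\ref{prop:acute-touching-set}, so $\bd\Sigma$ lies in $\bd^F\Omega$ and meets each face it touches orthogonally; an $n$-plane $P$ meeting the flat face $\bd^+\Omega$ (resp. $\bd^-\Omega$) orthogonally along a nonempty set and also fitting properly inside the acute wedge must be orthogonal to both half-hyperplanes $\bd^\pm\Omega$ spanning the $2$-plane $\Omega^2_\theta$, which forces $P$ to be a horizontal hyperplane in the sense of~(\ref{Eq: horizontal plane in domain}), up to a translation along the edge $\bd^E\Omega$ (a short linear-algebra check on $T_0\Omega$ using that $\theta\ne\pi/2$ rules out the vertical half-plane alternative).

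The main obstacle I anticipate is making the ``reflection kills the boundary terms'' step fully rigorous: one must verify that the orthogonality $\eta\perp\bd^\pm\Omega$ really does imply the reflected surface $\widehat\Sigma$ is $C^2$ (indeed minimal and smooth) across the face, which requires the a priori $C^{1,1}$-to-edge hypothesis and the elliptic/Schauder estimates cited from \cite{mazurowski2023min}, and that the various test functions built from $|A|$ and cutoffs are genuinely admissible, i.e. arise from vector fields in $\mathfrak X(M,\Sigma)$ whose restriction to $\Sigma$ is $f\nu$ --- this is where the free boundary condition is used in an essential way and must be checked with care near $\bd^{FE}\Sigma$ when $\theta=\pi/2$. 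A secondary technical point is the Michael--Simon--Sobolev inequality with a boundary portion; this can be handled either by the reflection trick or by citing a free-boundary version, but it needs to be stated cleanly.
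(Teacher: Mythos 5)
Your proposal follows essentially the same route as the paper: reduce to the properly embedded case via Propositions \ref{prop:acute-touching-set} and \ref{prop:right-touching-set}, note that the second-variation boundary term $\int_{\bd\Sigma}\laa\nabla_{f\nu}(f\nu),\eta\raa = \int_{\bd\Sigma} f^2 A_{\bd^\pm\Omega}(\nu,\nu)$ vanishes because the faces are flat, run the Schoen--Simon--Yau iteration with test functions $|A|^{1+q}\varphi$, and kill the extra boundary term $\int_{\bd^F\Sigma}\varphi^2|A|^{1+2q}\laa\grad|A|,\eta\raa$ by the local reflection/evenness argument. Your identification of where the free boundary condition and the flatness of $\bd^\pm\Omega$ enter is exactly the paper's.

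The one ingredient you are missing is how to justify the integration by parts of the Simons inequality \emph{near the edge} $\bd^E\Sigma$. Your cutoff $\psi$ is only a radial cutoff at scale $R$; but $\Sigma$ is merely $C^{1,1}$ up to $\bd^E\Sigma$, so $\Delta|A|$ is not a priori integrable up to the edge and the multiplication of Simons' inequality by $|A|^{2q}\varphi^2$ followed by integration by parts is not directly licensed there (the reflection trick only handles the faces, not the codimension-two edge where the two faces meet). The paper resolves this with a logarithmic cutoff $\eta_r$ vanishing near $\bd^E\Sigma$: since $\bd^E\Sigma$ has codimension $2$ in $\Sigma$, one can choose $\eta_r\to 1$ with $\int|\grad\eta_r|^2\to 0$, run the whole argument with $g_r=f\eta_r$, and pass to the limit $r\to 0$ at the end. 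This is a standard capacity argument and fits into your framework without change, but it does need to be said; as written, your proof silently differentiates $|A|$ twice in a region where you only have one Lipschitz derivative of the surface.
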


\begin{proof}
By Propositions \ref{prop:acute-touching-set} and \ref{prop:right-touching-set}, we can assume that $\Sigma$ is properly embedded. Let $\nu$ be the normal vector to $\Sigma$. Since $\Sigma$ is properly embedded, for any function $f$ on $\Sigma$, the vector field $f\nu$ on $\Sigma$ extends to an ambient vector field $X\in \mathfrak X(\Omega,\Sigma)$. In particular, the stability of $\Sigma$ implies that 
\begin{equation}
\label{equation:strong-stability}
Q(f,f) = \int_{\Sigma} \vert \grad f\vert^2 - \vert A\vert^2 f^2 + \int_{\bd \Sigma} \langle \del_{f\nu} (f\nu), \eta\rangle \ge 0
\end{equation}
for all $f$ which are restrictions of smooth ambient functions to $\Sigma$.  Next, observe that $\mathcal  H^{n-1}(\bd^E \Sigma) = 0$ by (\ref{Eq: edge is closed manifold}). Moreover, at any point $p\in \bd^F \Sigma$ we have 
\[
\lan \del_{f\nu}(f\nu), \eta\ran = f^2  A_{\bd^{\pm}\Omega}(\nu,\nu) = 0
\]
since $\bd^{+}\Omega$ and $\bd^{-}\Omega$ are both flat.  Therefore, the boundary integral vanishes and we in fact have 
\[
\int_{\Sigma} \vert \grad f\vert^2 - (\vert A\vert^2 - \ric^M(\nu,\nu))f^2 \ge 0
\]
for all $f$. 

We now proceed to use the Schoen-Simon-Yau estimates \cite{schoen1975curvature} as in the proof of the usual Bernstein theorem. Here we largely follow \cite[Theorem 2.21] {colding2011course}. However, one needs to be slightly careful, as the $C^{1,1}$-to-edge regularity does not a priori ensure that $\Delta \vert A\vert$ is integrable all the way to the edge. Thus we shall make use of the log-cutoff trick. 

Specifically, given a compactly supported test function $f$, we can take $R>0$ so that $\spt(f)\subset\Sigma\cap \mB^{n+1}_R(0)$. Then, define cutoff functions $\eta_r$ on $\Sigma$ which vanish in a neighborhood of $\bd^E \Sigma$ and which converge pointwise to 1 on $\Sigma\setminus \bd^E\Sigma$ as $r\to 0$. Since $\bd^E\Sigma$ is of codimension 2 in $\Sigma$, we can arrange that
\[
\int_{\Sigma\cap \mB^{n+1}_{2R}(0)} \vert \grad \eta_r\vert^2 \to 0
\]
as $r\to 0$ (cf. \cite[(1.103)]{colding2011course}). 
Now 
define $g_r := f \eta_r$. Apply the stability inequality with the test function $\vert A\vert^{1+q} g_r$ to get
\begin{align}
\label{eqn:bern1}
\int_\Sigma \vert A\vert^{4+2q}g_r^2 &\le (1+q)^2 \int_\Sigma g_r^2 \vert \grad \vert A\vert \vert^2 \vert A\vert^{2q} + \int_{\Sigma} \vert A\vert^{2+2q}\vert \grad g_r\vert^2 \\
&\qquad + 2(1+q)\int_\Sigma g_r \vert A\vert^{1+2q}\laa \grad g_r,\grad \vert A\vert \raa.\nonumber
\end{align}
Next, multiply Simons' inequality by $\vert A\vert^{2q} g_r^2$ and then integrate. 
This yields 
\begin{align}
\label{eqn:bern2}
    \frac{2}{n}\int_\Sigma \vert \grad \vert A\vert \vert^2 \vert A\vert^{2q}g_r^2 &\le \int_{\Sigma} \vert A\vert^{4+2q}g_r + \int_\Sigma g_r^2 \vert A\vert^{1+2q} \Delta \vert A\vert \nonumber \\
    &= \int_\Sigma \vert A\vert^{4+2q}g_r^2 - 2\int_{\Sigma} g_r \vert A\vert^{1+2q} \lan \grad g_r,\grad \vert A\vert \ran \\
    &\qquad - (1+2q) \int_{\Sigma} g_r^2 \vert A\vert^{2q} \vert \grad \vert A\vert\vert^2 + \int_{\bd \Sigma} g_r^2 \vert A\vert^{1+2q} \lan \grad \vert A\vert,\eta\ran. \nonumber
\end{align}
Note there is an extra boundary term here compared with the usual calculation \cite[Equation (2.100)]{colding2011course}. 

We claim that this extra boundary term vanishes. 
Namely, we want to show that 
\[
\int_{\bd \Sigma} g_r^2 \vert A\vert^{1+2q}\lan  \grad \vert A\vert,\eta\ran = 0. 
\]
Since {$\mathcal H^{n-1}(\bd^E\Sigma)=0$}, it suffices to show that $\lan \grad \vert A\vert,\eta\ran(p) = 0$ for all $p\in \bd^F \Sigma$. Since $\Sigma$ is properly embedded, we can assume without loss of generality that $p\in \bd^+\Omega$. Let $\tilde \Sigma$ be the hypersurface consisting of $\Sigma$ together with the reflection of $\Sigma$ across $\bd H_+$. Then $\tilde \Sigma$ is a smooth minimal hypersurface with no boundary in a neighborhood of $p$ in $\R^{n+1}$. Now let $\gamma\colon (-1,1) \to \tilde \Sigma$ be a curve with 
\[\gamma(0) = p,\qquad \gamma'(0) = \eta,\qquad {\rm and}\quad \gamma(t) = R(\gamma(-t)),\]
where $R$ denotes reflection across $\bd H_+$. Then $\vert A\vert(\gamma(t))$ is an even function of $t$ and so 
\[
0 = \frac{d}{dt}\bigg\vert_{t=0} \vert A\vert(\gamma(t)) = \lan \grad \vert A\vert,\eta\ran(p). 
\]
Since $p\in \bd^F \Sigma$ was arbitrary, the boundary integral therefore vanishes as claimed.

Equations (\ref{eqn:bern1}) and (\ref{eqn:bern2}) can now be combined exactly as in \cite{colding2011course} to show that 
\begin{equation}
    \label{eqn:bern3}
    \int_\Sigma \vert A\vert^{4+2q}g_r^2 \le \left(\frac{2(1+q)^2(1+\frac{q}{\eps})}{\frac{2}{n}-q^2-\eps q} + 2\right)\int_\Sigma \vert A\vert^{2+2q}\vert \grad g_r\vert^2
\end{equation}
for any sufficiently small $\eps > 0$. Since the energy of the logarithmic cutoff function $\eta_r$ in $\mB^{n+1}_{2R}(0)$ goes to 0 as $r\to 0$ and $\spt(f)\subset 
\Sigma\cap \mB^{n+1}_R(0)$, we can send $r\to 0$ in the previous equation to obtain 
\begin{equation}
    \label{eqn:bern4}
    \int_\Sigma \vert A\vert^{4+2q}f^2 \le \left(\frac{2(1+q)^2(1+\frac{q}{\eps})}{\frac{2}{n}-q^2-\eps q} + 2\right)\int_\Sigma \vert A\vert^{2+2q}\vert \grad f\vert^2.
\end{equation}
Given (\ref{eqn:bern4}), the remainder of the proof can now be completely exactly as in \cite{colding2011course}.
\end{proof}

\begin{remark}\label{Rem: Bernstein theorem}
When $\theta > \pi/2$ the same result still holds for properly embedded FBMHs {and almost properly embedded FBMHs satisfying (\ref{equation:strong-stability})}. However, for almost properly embedded FBMHs it is not clear in general that stability in the sense of Definition \ref{definition:stability} implies (\ref{equation:strong-stability}).
\end{remark}

\section{Curvature estimates for stable free boundary minimal hypersurfaces}\label{Sec: curvature estimates}

In this section, we show the main curvature estimates for stable almost properly embedded locally wedge-shaped FBMHs in any compact locally wedge-shaped Riemannian manifold $M\subset \R^L$. 
Since the curvature estimates are well known in $\interior (M)\cup \bd^F M$ (c.f. \cite{schoen1975curvature}\cite{schoen1981regularity}\cite{guang2020curvature}), we mainly focus on $p\in \bd^EM$. 

The following lemma is a simple observation, which is helpful to show the Euclidean area growth in the proof of our main theorem. 
\begin{lemma}\label{Lem: dist to edge bounded by dist to face}
    Let $M^{n+1}\subset \R^L$ be a locally wedge-shaped Riemannian manifold (with induced metric). 
    Then for any $p\in\bd^EM$, there exists $r_0=r_0(p)>0$ so that for any $q\in M\cap \mB^L_{r_0}(p)$, 
    $$ \frac{1}{2}\sin(\theta(p))\dist_{\R^L}(q,\bd^E M) \leq \dist_{\R^L}(q,\bd^+ M_{p,r_0}) + \dist_{\R^L}(q, \bd^- M_{p,r_0}) $$
    where $\bd^{\pm} M_{p,r_0}$ are defined as in (\ref{Eq: local extension}).  
\end{lemma}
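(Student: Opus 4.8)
\medskip
\noindent\textbf{Proof proposal.} The plan is to reduce to the flat wedge model, in which the inequality holds with the sharp constant $\sin\theta(p)$; the factor $\tfrac12$ in the statement is exactly the slack needed to absorb the bi-Lipschitz distortion of a local chart, which is negligible on a small ball because its differential at the centre is orthogonal. Concretely, I would fix a local model $(\phi_p,\mB^L_R(p),\Omega)$ of $M$ around $p$, with $\Omega=\Omega^{n+1}_{\theta(p)}=\Omega^2_{\theta(p)}\times\R^{n-1}$ and $(D\phi_p)_0\in O(L)$.

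\emph{The flat estimate.} Write a point $x\in\Omega$ as $x=(\hat y,z)$ with $\hat y\in\Omega^2_{\theta(p)}\subset\R^2$ and $z\in\R^{n-1}$. Then $\dist_{\R^{n+1}}(x,\bd^E\Omega)$, $\dist_{\R^{n+1}}(x,\bd^+\Omega)$ and $\dist_{\R^{n+1}}(x,\bd^-\Omega)$ depend only on $\hat y$ and equal the planar distances of $\hat y$ to $\{0\}$, to the ray $\bd^{+}\Omega^2_{\theta(p)}$, and to the ray $\bd^{-}\Omega^2_{\theta(p)}$. Writing $\hat y=r(\cos\alpha,\sin\alpha)$ with $r=|\hat y|$ and $\alpha\in[-\theta(p)/2,\theta(p)/2]$, one has $\dist_{\R^2}(\hat y,\bd^{\pm}\Omega^2_{\theta(p)})\ge r\sin(\theta(p)/2\mp\alpha)$, so
\[
\dist_{\R^{n+1}}(x,\bd^+\Omega)+\dist_{\R^{n+1}}(x,\bd^-\Omega)\ \ge\ 2r\,\sin\tfrac{\theta(p)}2\cos\alpha\ \ge\ 2r\,\sin\tfrac{\theta(p)}2\cos\tfrac{\theta(p)}2\ =\ r\,\sin\theta(p),
\]
using $|\alpha|\le\theta(p)/2<\pi/2$; and $r=\dist_{\R^{n+1}}(x,\bd^E\Omega)$. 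This proves the flat version of the lemma.

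\emph{Transferring through the chart.} Since $\phi_p$ is $C^1$ and $(D\phi_p)_0\in O(L)$, for any $\lambda\in(1,\sqrt2)$ there is $\rho>0$ such that $\phi_p$ is $\lambda$-bi-Lipschitz on $\mB^L_\rho(0)$. I would then choose $r_0>0$ small enough that $2r_0<R$, that $\phi_p^{-1}(\mB^L_{2r_0}(p))\subset\mB^L_\rho(0)$, and that $\bd^E M\cap\mB^L_{2r_0}(p)=\phi_p(\bd^E\Omega\cap\mB^L_{2r_0}(0))$ (possible since $\bd^E M$ is a closed embedded submanifold). For $q\in M\cap\mB^L_{r_0}(p)$ set $x=\phi_p^{-1}(q)\in\Omega$. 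As $p$ belongs to each of $\bd^E M$ and $\bd^{\pm}M_{p,r_0}=\phi_p(\bd^{\pm}\Omega\cap\mB^L_R(0))$, the nearest point of $q$ in each of these sets lies within $2r_0$ of $p$, hence in the image of $\mB^L_\rho(0)$; feeding the competitor $(0,z)\in\bd^E\Omega$ into the edge distance and using the bi-Lipschitz bounds one gets $\dist_{\R^L}(q,\bd^E M)\le\lambda\,\dist_{\R^{n+1}}(x,\bd^E\Omega)$ and $\dist_{\R^L}(q,\bd^{\pm}M_{p,r_0})\ge\lambda^{-1}\dist_{\R^{n+1}}(x,\bd^{\pm}\Omega)$. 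Combined with the flat estimate,
\[
\dist_{\R^L}(q,\bd^+M_{p,r_0})+\dist_{\R^L}(q,\bd^-M_{p,r_0})\ \ge\ \lambda^{-2}\sin\theta(p)\,\dist_{\R^L}(q,\bd^E M)\ \ge\ \tfrac12\sin\theta(p)\,\dist_{\R^L}(q,\bd^E M),
\]
since $\lambda^{-2}>\tfrac12$.

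\emph{Main obstacle.} Everything genuinely mathematical is the planar trigonometry of the flat estimate; the only thing that needs care is the bookkeeping in the last step, namely checking that the point realizing each of the three distances to $q$ lies inside the patch where the chart is bi-Lipschitz. This is routine because all three competitor sets contain $p$ and $|q-p|<r_0$, so their nearest points to $q$ stay within $2r_0$ of $p$, while shrinking $r_0$ rules out any extraneous component of $\bd^E M$ entering the picture.
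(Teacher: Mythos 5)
Your proposal is correct, and it reaches the conclusion by a genuinely different route from the paper. The paper argues by contradiction and blow-up: assuming the inequality fails along a sequence $q_i\to p$, it rescales by $\eta_i(x)=(x-q_i)/d_i$, passes to a limiting Euclidean wedge, and derives a contradiction there from the planar estimate $\sin\theta_++\sin\theta_-\ge\sin(\theta_++\theta_-)=\sin\theta(p)$. You instead prove the sharp flat inequality directly (your computation $\dist(\hat y,\bd^{\pm}\Omega^2_{\theta})\ge r\sin(\theta/2\mp\alpha)$, summing to $2r\sin\tfrac{\theta}{2}\cos\alpha\ge r\sin\theta$, is the same trigonometric heart in explicit polar form, and is valid even when a nearest point degenerates to the vertex since then the distance is $r\ge r\sin(\cdot)$) and then transfer it to $M$ through the local model, using that $(D\phi_p)_0\in O(L)$ makes $\phi_p$ a $\lambda$-bi-Lipschitz map on a small ball for any $\lambda<\sqrt2$, so the distortion $\lambda^{-2}>\tfrac12$ is exactly absorbed by the stated constant. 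What your approach buys is a quantitative, constructive proof with an explicit reason for the factor $\tfrac12$ and an explicit (in principle computable) $r_0(p)$; what the paper's blow-up buys is less chart bookkeeping, since compactness handles the distortion automatically. The only points needing care in your write-up are the ones you already flag: the competitor $(0,z)$ and the nearest points of $\bd^{\pm}M_{p,r_0}$ to $q$ must lie in the bi-Lipschitz patch, which follows since $p$ lies in all three competitor sets and $|q-p|<r_0$, and the global edge distance $\dist_{\R^L}(q,\bd^E M)$ only ever enters on the small side of the inequality, so a far-away edge component can only help.
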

\begin{proof}
    Suppose the assertion is not true. 
    Then we can find a sequence $\{q_i\}_{i\in\N}$ in $M$ so that $q_i\to p\in\bd^EM$ as $i\to\infty$, and $ \frac{1}{2} \sin(\theta(p))d_i> d_i^+ +d_i^->0$, where $d_i:=\dist_{\R^L}(q_i,\bd^E M)$ and $d_i^{\pm}:=\dist_{\R^L}(q_i,\bd^{\pm} M_{p,r_0})$. 
    Denote by $x_i$ and $y_i^{\pm}$ the nearest projection (in $\R^L$) of $q_i$ to $\bd^EM$ and $\bd^{\pm} M_{p,r_0}$ respectively. 
    Then consider the blow-up (conformal) maps $\eta_i(x):=(x-q_i)/d_i$, and take $M_i:=\eta_i(M)$. 
    By the smoothness of $M$, we see $M_i$ converges to a wedge domain $\Omega=H_+ \cap H_-$ containing $0$ with wedge angle $\theta(p)$ in the $(n+1)$-subspace containing $T_pM$, where $H_{\pm}$ are two affine closed half $(n+1)$-spaces.  
    In addition, we have
    \begin{itemize}
        \item $\eta_i(x_i)\to x \in \bd^E\Omega\cap\bd\mB^L_1(0)$, 
        ~$\eta_i(y_i^{\pm})\to y^{\pm} \in \bd^{\pm}\Omega \cap \Clos(\mB^L_1(0))$, 
        \item $\frac{1}{2}\sin(\theta(p)) \geq |y^+| + |y^-|$,
    \end{itemize}
    where $x$ and $y^{\pm}$ are the nearest point of $0$ to $\bd^E\Omega$ and $\bd^{\pm}\Omega$ (in $\R^L$) respectively.
    However, in the Euclidean wedge domain $\Omega$, we have $|y^+| + |y^-| \geq \dist_{\R_L}(0,\bd H_+) + \dist_{\R^L}(0, \bd H_-) = \sin(\theta_+)+\sin(\theta_-) \geq \sin(\theta_+)\cos(\theta_-)+\sin(\theta_-)\cos(\theta_+) = \sin(\theta_++\theta_-)=\sin(\theta(p))$, where $\theta_{\pm}\in [0,\theta(p)]$ are the wedge angle between $\bd^{\pm}\Omega$ and the half $n$-plane containing $\bd^E\Omega\cup \{0\}$.
    This contradicts the second bullet. 
\end{proof}

For simplicity, we use the terminologies $M_{p,r}:=\mB^L_r(p)\cap M$, $ \bd_m M_{p,r} := \mB^L_r(p)\cap \bd_m M $, and $\bd^EM_{p,r}:=\mB^L_r(p)\cap \bd^EM$ as we did after (\ref{Eq: local extension}), where $p\in\bd^EM$ and $r>0$. 
Additionally, we denote by $r_0>0$ the radius (depending on $M$) so that the monotonicity formula (Proposition \ref{Prop: monotonicity fomula}) and Lemma \ref{Lem: small curvature implies graphical} hold for any $r\in (0,r_0)$ and $p\in\bd^E M$.

\begin{theorem}\label{Thm: curvature estimates}
    Let $2\leq n\leq 5$, $M^{n+1}\subset \R^L$ be a locally wedge-shaped Riemannian manifold (with induced metric) satisfying (\ref{dag}) or (\ref{ddag}), and $r_0$ be given as above. 
    Suppose $p\in\bd^EM$, $r\in (0,r_0)$, and  $(\Sigma, \{\bd_m\Sigma\}) \subset ( M \cap \mB^L_r(p), \{\bd_{m+1}M \cap \mB^L_r(p)\})$ is an almost properly embedded, 
    $C^{1,1}$-to-edge locally wedge-shaped, stable FBMH so that ${\rm Area}(\Sigma\cap \mB^L_r(p)) \leq C_1$. 
    Then 
    $$ \sup_{x\in \Sigma\cap\mB_{r/2}^L(p)} |A_\Sigma|(x) \leq C_2, $$
    where $C_2=C_2(C_1,M)>0$. 
\end{theorem}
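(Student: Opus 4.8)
The plan is to argue by contradiction via a blow-up argument, using the compactness theorem (Theorem~\ref{Thm: curvature bound implies convergence}) to extract a limit and the Bernstein-type theorem (Theorem~\ref{Thm: Bernstein theorem}) to derive the contradiction; the one genuinely new point, compared with the classical Schoen--Simon situation, is that the blow-up point may converge to the edge $\bd^E M$, so the blow-up model becomes a Euclidean wedge. Since the estimate is classical at interior and face points (\cite{schoen1981regularity}, \cite{guang2020curvature}), any failure must concentrate along $\bd^E M$. So suppose the estimate fails: then there are $p_i\in\bd^EM$, radii $r_i\in(0,r_0)$, and almost properly embedded $C^{1,1}$-to-edge stable FBMHs $\Sigma_i\subset M\cap\mB^L_{r_i}(p_i)$ with ${\rm Area}(\Sigma_i)\le C_1$ but $\sup_{\Sigma_i\cap\mB^L_{r_i/2}(p_i)}|A_{\Sigma_i}|\to\infty$. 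Passing to a subsequence, I would assume $p_i\to p_\infty\in\bd^EM$ and, after a preliminary conformal dilation centered at $p_i$ if needed (using the monotonicity formula Proposition~\ref{Prop: monotonicity fomula}, the area bound, and Lemma~\ref{Lem: dist to edge bounded by dist to face} to keep uniform Euclidean volume growth below a fixed radius), that the relevant radius is bounded below. Working in a fixed local model about $p_\infty$ from Lemma~\ref{Lem: local chart}, I would run the usual point-selection argument (as in \cite{colding2011course}, \cite{schoen1981regularity}): pick $x_i\in\Sigma_i$ maximizing $|A_{\Sigma_i}|(x)\,\dist_{\R^L}(x,\partial\mB^L_{3r_i/4}(p_i))$, set $\lambda_i:=|A_{\Sigma_i}|(x_i)$, and note $\lambda_i\to\infty$ while $|A_{\Sigma_i}|\le 2\lambda_i$ on $\Sigma_i\cap\mB^L_{\rho_i/\lambda_i}(x_i)$ for some $\rho_i\to\infty$.

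Next I would rescale by $\Phi_i(x):=\lambda_i(x-x_i)$, putting $\tilde\Sigma_i:=\Phi_i(\Sigma_i)$ and $\tilde M_i:=\Phi_i(M)$. Since the second variation inequality and the admissible class $\mathfrak X(M,\Sigma)$ are invariant under conformal dilation, each $\tilde\Sigma_i$ is an almost properly embedded, $C^{1,1}$-to-edge, $2$-sided, stable FBMH in $\tilde M_i$ with $|A_{\tilde\Sigma_i}|(0)=1$, $|A_{\tilde\Sigma_i}|\le2$ on $\mB^L_{\rho_i}(0)$, and (by the monotonicity formula together with the area hypothesis) uniform Euclidean volume growth ${\rm Area}(\tilde\Sigma_i\cap\mB^L_R(0))\le CR^n$ on each fixed ball. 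Passing to a subsequence, I would split into two cases according to whether $d_i:=\lambda_i\,\dist_{\R^L}(x_i,\bd^EM)$ stays bounded. If $d_i\to\infty$, then $\tilde M_i$ converges locally to $\R^{n+1}$ or to a half-space $\R^{n+1}_+$, and the classical curvature/area compactness for stable minimal hypersurfaces (in the half-space case, after reflecting across the flat boundary) produces a smooth, $2$-sided, stable minimal limit with Euclidean volume growth and $|A|(0)=1$; since $3\le n+1\le6$, this contradicts the Bernstein theorem of Schoen--Simon--Yau \cite{schoen1975curvature}.

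If instead $d_i$ is bounded, then, wedge angles being scale invariant and $|A_{\bd^\pm M}|,|A_{\bd^EM}|$ bounded (Lemma~\ref{Lem: local chart}), $\tilde M_i$ converges locally to a Euclidean wedge domain $\Omega$ of angle $\theta(p_\infty)$, and by hypothesis (\ref{dag})/(\ref{ddag}) we have $\theta(p_\infty)\le\pi/2$. Lemmas~\ref{Lem: small curvature implies graphical} and \ref{Lem: properness of graph} give uniform local graphical control (with oblique-derivative boundary conditions), so Theorem~\ref{Thm: curvature bound implies convergence} applies: after a subsequence, $\tilde\Sigma_i$ converges $C^{1,\alpha}$-to-edge and $C^\infty$ away from the edge to an almost properly embedded $C^{1,1}$-to-edge $2$-sided stable FBMH $\tilde\Sigma\subset\Omega$ with Euclidean volume growth. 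Invoking the $C^{2,\alpha}$-to-edge regularity and boundary Schauder estimates of \cite[Theorem~2.17, Appendix~C]{mazurowski2023min} (available precisely because (\ref{dag})/(\ref{ddag}) hold), the convergence is in fact $C^2$ up to and including the edge, so $|A_{\tilde\Sigma}|(0)=1$ and $\tilde\Sigma$ is not flat. But Theorem~\ref{Thm: Bernstein theorem} (whose touching-set hypotheses are met via Propositions~\ref{prop:acute-touching-set} and \ref{prop:right-touching-set}) forces $\tilde\Sigma=\Omega\cap P$ for an $n$-plane $P$, a contradiction. Either way the assumed failure is impossible, which proves the theorem.

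The hard part, I expect, is the edge blow-up in the second case. The delicate points are: propagating the curvature bound through the rescaling so that the limit genuinely passes through $0$ with $|A|=1$ rather than losing curvature at the edge, which forces one to bootstrap the $C^{1,\alpha}$-to-edge convergence of Theorem~\ref{Thm: curvature bound implies convergence} up to $C^2$-to-edge using the elliptic estimates of \cite{mazurowski2023min}; checking that the rescaled ambient spaces converge to a genuine Euclidean wedge of angle $\le\pi/2$ and not to a degenerate half-space-type configuration, by controlling the two faces and the edge through their bounded second fundamental forms and Lemma~\ref{Lem: dist to edge bounded by dist to face}; and carrying out the point-selection and the accounting of radii uniformly near the stratified boundary, so that the monotonicity formula delivers uniform Euclidean volume growth for the entire blow-up sequence. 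The remainder is a routine transcription of the classical blow-up contradiction.
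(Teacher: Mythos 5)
Your proposal is correct and follows essentially the same route as the paper's proof: a point-selection blow-up, a case split according to whether the rescaled distances to $\bd M$ and $\bd^E M$ stay bounded (yielding $\R^{n+1}$, a half-space, or a Euclidean wedge as the limit model), uniform Euclidean volume growth via the interior/boundary/edge monotonicity formulas together with Lemma \ref{Lem: dist to edge bounded by dist to face}, and then a contradiction with the classical Bernstein theorem or with Theorem \ref{Thm: Bernstein theorem}, using the elliptic estimates of \cite{mazurowski2023min} to upgrade the $C^{1,\alpha}$-to-edge convergence to $C^2$ so that $|A|(0)=1$ survives in the limit. The points you flag as delicate are exactly the ones the paper works hardest on.
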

\begin{remark}
    By Proposition \ref{prop:acute-touching-set} and \ref{prop:right-touching-set}, under the wedge angle assumption $\theta(q)\leq \pi/2$ for all $q\in \mB^L_{r_0}(p)\cap\bd^EM$, the global stability of $\Sigma$ can be weakened to the {\em stability away from the touching set} (naturally generalized from \cite[Definition 2.2]{guang2021compactness}) to obtain the Bernstein-type theorem \ref{Thm: Bernstein theorem} and also the above curvature estimates. 
\end{remark}
\begin{proof}
    The theorem is obtained by a contradiction argument. 
    Specifically, suppose there exists a sequence $(\Sigma_i, \{\bd_m\Sigma_i\}) \subset ( M_{p,r}, \{\bd_{m+1}M_{p,r}\})$ of properly embedded, locally wedge-shaped, stable FBMHs so that
    \begin{equation}\label{eq: proof curvature estimates 1}
        {\rm Area}(\Sigma_i\cap \mB^L_r(p)) \leq C, \qquad {\rm but}\qquad \sup_{x\in\Sigma_i\cap \mB^L_{r/2}(p)} |A_{\Sigma_i}|(x) \to \infty,~{\rm as}~i\to\infty.
    \end{equation}
    Hence, for each $i\in\N$, there exists $x_i\in \Sigma_i\cap \mB^L_{r/2}(p)$ so that $\lim_{i\to\infty}|A_{\Sigma_i}|(x_i) = \infty$ and $x_i\to x\in \mB^L_{3r/4}(p)$. 
    Note we must have $x\in \bd^EM$ due to the curvature estimates in the interior (Schoen–Simon–Yau \cite{schoen1975curvature}) and on the smooth boundary (Guang-Li-Zhou \cite{guang2020curvature}). 
    Define 
    $$ r_i := \left( |A_{\Sigma_i}|(x_i) \right)^{-\frac{1}{2}} \to 0\qquad {\rm as}~ i\to\infty,$$
    and thus $r_i |A_{\Sigma_i}(x_i)| \to\infty$ as $i\to\infty$. 
    
    Since each $\Sigma_i$ is $C^{1,1}$-to-edge, we can take $y_i\in \Sigma_i\cap\mB^L_{r_i}(x_i)$ so that:
    \begin{equation}\label{eq: proof curvature estimates 2}
        |A_{\Sigma_i}(y_i)|\cdot \dist_{\R^L}(y_i,\bd \mB_{r_i}^L(x_i)) \geq \frac{i}{i+1}\sup_{y\in \Sigma_i\cap\mB^L_{r_i}(x_i)} |A_{\Sigma_i}(y)|\cdot \dist_{\R^L}(y,\bd \mB_{r_i}^L(x_i)).
    \end{equation}
    Define $r_i':= r_i - |y_i-x_i|$, which tends to $0$ since $r_i'\in (0,r_i)$. 
    Noting $\mB^L_{r_i'}(y_i)\subset \mB^L_{r_i}(x_i)$, one easily verifies that 
    \begin{equation}\label{eq: proof curvature estimates 3}
        |A_{\Sigma_i}(y_i)|\cdot \dist_{\R^L}(y_i,\bd \mB_{r_i'}^L(y_i)) \geq \frac{i}{i+1}\sup_{y\in \Sigma_i\cap\mB^L_{r_i'}(y_i)} |A_{\Sigma_i}(y)|\cdot \dist_{\R^L}(y,\bd \mB_{r_i'}^L(y_i)).
    \end{equation}
    Consider $\lambda_i := |A_{\Sigma_i}|(y_i)$. 
    Then, by $\dist_{\R^L}(y_i,\bd \mB_{r_i'}^L(y_i))= \dist_{\R^L}(y_i,\bd \mB_{r_i}^L(x_i))$ and (\ref{eq: proof curvature estimates 2}),
    \begin{eqnarray}\label{eq: proof curvature estimates 4}
        \lambda_i r_i' &=& |A_{\Sigma_i}|(y_i)\dist_{\R^L}(y_i,\bd \mB_{r_i'}^L(y_i)) \\
        &\geq& \frac{i}{i+1}|A_{\Sigma_i}|(x_i)\dist_{\R^L}(x_i,\bd \mB_{r_i}^L(x_i)) \geq \frac{1}{2} r_i |A_{\Sigma_i}|(x_i) \to \infty, \nonumber
    \end{eqnarray}
    and thus $\lambda_i\to \infty$ as $i\to\infty$. 

    Now we take the various blow-up maps $\eta_i (z) := \lambda_i (z-y_i)$, and consider the blow-up sequence of almost properly embedded, locally wedge-shaped, stable FBMHs 
    $$ (\Sigma_i', \{\bd_m\Sigma_i'\}) ~\subset~ (M_i\cap \mB^L_{\lambda_ir}(0), \{\bd_{m+1} M_i \cap \mB^L_{\lambda_ir}(0)\}) ,$$
    where $\Sigma_i':= \eta_i(\Sigma_i)$ and $M_i := \eta_i(M)$. 
    Note 
    \begin{equation}\label{eq: proof curvature estimates 5.0}
        |A_{\Sigma_i'}|(0) = 1.
    \end{equation}
    Moreover, for any fixed $r'>0$, we claim the following inequality for $i$ large enough:
    \begin{equation}\label{eq: proof curvature estimates 5}
        |A_{\Sigma_i'}|(z) \leq (1+\frac{1}{i})\frac{\lambda_i r_i'}{\lambda_ir_i' - r'} \to 1, \qquad\forall z\in\Sigma_i'\cap\mB^L_{r'}(0).
    \end{equation}
    Indeed, by (\ref{eq: proof curvature estimates 4}), we have $r'/\lambda_i < r_i'$ for $i$ large enough. 
    Then, for any $z\in \Sigma_i'\cap\mB^L_{r'}(0)$, $\eta_i^{-1}(z)\in\Sigma_i\cap\mB^L_{r'/\lambda_i}(y_i)\subset\Sigma_i\cap\mB^L_{r_i'}(y_i)$ and 
    $$\dist_{\R^L}(\eta_i^{-1}(z), \bd \mB^L_{r_i'}(y_i)) = r_i' - |\eta_i^{-1}(z) - y_i| \geq r_i' - r'/\lambda_i. $$
    Hence, $\frac{i}{i+1}|A_{\Sigma_i}|(\eta_i^{-1}(z)) \cdot  (r_i'-r'/\lambda_i) \leq \frac{i}{i+1}|A_{\Sigma_i}|(\eta_i^{-1}(z)) \cdot \dist_{\R^L}(\eta_i^{-1}(z), \bd \mB^L_{r_i'}(y_i)) \leq \lambda_i r_i'$ by (\ref{eq: proof curvature estimates 3}), which indicates (\ref{eq: proof curvature estimates 5}). 

    Noting $\lim_{i\to\infty} y_i = x\in\bd^EM$, the smoothness of $M$ indicates that $M_i$ converges to a domain in the $(n+1)$-subspace containing $T_xM$ smoothly and locally uniformly in $\R^L$. 
    Then the arguments are separated into three cases:
    \begin{itemize}
        \item[(I)] $\liminf_{i\to\infty} \lambda_i\cdot \dist_{\R^L}(y_i,\bd M) = \infty$;
        \item[(II)]  $\liminf_{i\to\infty} \lambda_i\cdot \dist_{\R^L}(y_i,\bd M) < \infty$, and $\liminf_{i\to\infty} \lambda_i\cdot \dist_{\R^L}(y_i,\bd^E M) = \infty$;
        \item[(III)] $\liminf_{i\to\infty} \lambda_i\cdot \dist_{\R^L}(y_i,\bd^E M) < \infty$. 
    \end{itemize}
    The basic idea is to prove in each case that $\Sigma_i'$ satisfies a uniformly Euclidean area growth and a curvature bound (\ref{eq: proof curvature estimates 5}), and thus the convergence theorem implies a subsequence of $\{\Sigma_i'\}$ tends to a stable minimal hypersurface $\Sigma_\infty$ with Euclidean area growth and non-vanishing second fundamental form at $0$, which contradicts the Bernstein-type theorem. 
    
    For simplicity, in the following arguments, let $C>0$ be a varying uniform constant, 
    $$d_i := \dist_{\R^L}(y_i, \bd^EM) \geq \dist_{\R^L}(y_i, \bd M) = \min\{d_i^+, d_i^-\}, \quad d_i^{\pm} := \dist_{\R^L}(y_i, \bd^{\pm} M_{p,r_0}),$$ and $z_i, z_i^{\pm}$ be the nearest projection (in $\R^L$) of $y_i$ to $\bd^EM$ and $\bd^{\pm} M_{p,r_0}$ respectively. 
    Without loss of generality, we always assume $\min\{d_i^+, d_i^-\} = d_i^+$. 
    
    {\bf Case (I).} In this case, the boundary $\bd M_i$ will disappear in the limit, i.e. 
    \[M_i\to \R^{n+1}=\mbox{the $(n+1)$-space containing $T_pM$}.\]
    For any fixed $r'>0$, noting $\lambda_id_i^+\to\infty$, we have $r'/\lambda_i<d_i^+ \to 0$ for $i$ large, and
    \begin{equation}\label{eq: proof curvature estimates inclusion 1}
        \mB^L_{r'/\lambda_i}(y_i) ~\subset~ \mB^L_{d_i^+}(y_i) ~\subset ~\mB^L_{2d_i^+}(z_i^+) \cap \mB^L_{d_i + d_i^+}(z_i) .
    \end{equation}
    Since $\mB^L_{d_i^+}(y_i)\cap \bd M=\emptyset$, the interior monotonicity formula \cite[17.6]{simon1983lectures} indicates
    \begin{equation}\label{eq: proof curvature estimates Euclidean area growth 0}
        \frac{{\rm Area}(\Sigma_i\cap\mB^L_{r'/\lambda_i}(y_i))}{(r'/\lambda_i)^n} \leq C\frac{{\rm Area}(\Sigma_i\cap\mB^L_{d_i^+}(y_i))}{(d_i^+)^n} .
    \end{equation}
    
    {\bf Sub-case (I.a):} $\liminf_{i\to\infty} d_i/d_i^+ = \infty$. 
    Then by Lemma \ref{Lem: dist to edge bounded by dist to face}, $\liminf_{i\to\infty}d_i^-/d_i^+=\infty$. 
    Let $R_i:=\dist_{\R^L}(z_i^+, \bd^-M_{p,r_0}) \to 0$, which satisfies $d_i^-\leq d_i^++R_i$. 
    Hence, $R_i > 2d_i^+ $ for $i$ large enough. 
    Noting $\mB^L_{R_i}(z_i^+)\cap \bd^-M_{p,r_0}=\emptyset$, we can use (\ref{eq: proof curvature estimates inclusion 1})(\ref{eq: proof curvature estimates Euclidean area growth 0}) and the boundary monotonicity formula \cite[Theorem 3.4]{guang2020curvature} to show
    $$ \frac{{\rm Area}(\Sigma_i\cap\mB^L_{r'/\lambda_i}(y_i))}{(r'/\lambda_i)^n} \leq 2^nC\frac{{\rm Area}(\Sigma_i\cap\mB^L_{2d_i^+}(z_i^+))}{(2d_i^+)^n} \leq 2^nC\frac{{\rm Area}(\Sigma_i\cap\mB^L_{R_i}(z_i^+))}{(R_i)^n}. $$
    Let $w_i\in\bd^EM$ be the nearest projection of $z_i^+$ to $\bd^EM$ (in $\R^L$). 
    By Lemma \ref{Lem: dist to edge bounded by dist to face}, $|z_i^+ - w_i| \leq 2 R_i/\sin(\theta(p))$ and 
    $$ \mB^L_{2d_i^+}(z_i^+) \subset \mB^L_{R_i}(z_i^+) \subset \mB^L_{(1+2/\sin(\theta(p)))R_i}(w_i) \subset \mB^L_{r/2}(w_i) \subset \mB^L_{r}(p).$$
    Thus, the edge monotonicity formula (Proposition \ref{Prop: monotonicity fomula}) and the above computations imply
    \begin{eqnarray}\label{eq: proof curvature estimates Euclidean area growth 1.1}
        \frac{{\rm Area}(\Sigma_i\cap\mB^L_{r'/\lambda_i}(y_i))}{(r'/\lambda_i)^n} &\leq& 2^n \frac{(\sin(\theta(p))+2)^n}{\sin^n(\theta(p))} C\frac{{\rm Area}(\Sigma_i\cap\mB^L_{(1+2/\sin(\theta(p)))R_i}(w_i))}{[(1+2/\sin(\theta(p)))R_i]^n} \nonumber\\
        &\leq& C \frac{{\rm Area}(\Sigma_i\cap\mB^L_{r/2}(w_i))}{(r/2)^n}\leq C \frac{{\rm Area}(\Sigma_i\cap\mB^L_{r}(p))}{r^n} \leq CC_1r^{-n}.
    \end{eqnarray}
    
    {\bf Sub-case (I.b):} $\liminf_{i\to\infty} d_i/d_i^+ < \infty$, i.e. $d_i/d_i^+$ is uniformly bounded. 
    Then by (\ref{eq: proof curvature estimates inclusion 1})(\ref{eq: proof curvature estimates Euclidean area growth 0}) and the edge monotonicity formula (Theorem \ref{Prop: monotonicity fomula}), we have 
    \begin{eqnarray}\label{eq: proof curvature estimates Euclidean area growth 1.2}
         \frac{{\rm Area}(\Sigma_i\cap\mB^L_{r'/\lambda_i}(y_i))}{(r'/\lambda_i)^n} &\leq& C(1+d_i/d_i^+)^n \frac{{\rm Area}(\Sigma_i\cap\mB^L_{d_i+d_i^+}(z_i))}{(d_i+d_i^+)^n} \nonumber \\
         &\leq & C \frac{{\rm Area}(\Sigma_i\cap\mB^L_{r/2}(z_i))}{(r/2)^n}\leq C \frac{{\rm Area}(\Sigma_i\cap\mB^L_{r}(p))}{r^n} \leq CC_1r^{-n}.
    \end{eqnarray}

    Therefore, by (\ref{eq: proof curvature estimates Euclidean area growth 1.1}) and (\ref{eq: proof curvature estimates Euclidean area growth 1.2}), $\Sigma_i'$ satisfies the uniform Euclidean area growth
    \begin{equation}\label{eq: proof curvature estimates Euclidean area growth}
        {\rm Area}(\Sigma_i'\cap \mB^L_{r'}(0))\leq C(r')^n 
    \end{equation}
    for any fixed $r'>0$ and $i$ large enough. 
    After applying the classical convergence theorem for minimal hypersurfaces with bounded area and curvature (\ref{eq: proof curvature estimates 5}), a subsequence of $\Sigma_i'$ converges smoothly and locally uniformly to a complete embedded stable minimal hypersurface $\Sigma_\infty$ in the Euclidean $(n+1)$-space containing $T_pM$, which also satisfies the above Euclidean area growth and $0\in \Sigma_\infty$. 
    It follows from the classical Bernstein theorem \cite[Theorem 2]{schoen1975curvature} that $\Sigma_\infty$ is an $n$-plane, which contradicts $|A_{\Sigma_\infty}|(0) = 1$ in (\ref{eq: proof curvature estimates 5.0}). 

    {\bf Case (II).} In this case, the edge $\bd^E M_i$ will disappear in the limit ($\liminf_{i\to\infty} \lambda_id_i=\infty$), while the face $\bd^+M_{p,r_0} $ will not ($\liminf_{i\to\infty} \lambda_id_i^+<\infty$). 
    By Lemma \ref{Lem: dist to edge bounded by dist to face}, we also have $\liminf_{i\to\infty} \lambda_id_i^-=\infty$, i.e. the face $\bd^-M_{p,r_0} $ will disappear in the limit too. 
    Hence, 
    \[ M_i\to P_+ = \mbox{an affine half space in the $(n+1)$-space containing } T_pM. \]
    Additionally, let $R_i:=\dist_{\R^L}(z_i^+, \bd^-M_{p,r_0}) $, which satisfies $d_i^-\leq d_i^++R_i$, and thus $\liminf_{i\to\infty} \lambda_i R_i=\infty$. 
    Given any fixed $r'>0$, we can take $i$ large enough so that
    \[ R_i > d_i^++r'/\lambda_i \quad{\rm and}\quad \mB_{d_i^++r'/\lambda_i}(z_i^+)\cap \bd^-M_{p,r_0} = \emptyset.\]
    Denote by $w_i\in\bd^EM$ the nearest projection of $z_i^+$ to $\bd^EM$ (in $\R^L$), which satisfies $|w_i-z_i^+|\leq 2R_i/\sin(\theta(p))$ for $i$ large enough by Lemma \ref{Lem: dist to edge bounded by dist to face}.
    Then for $i$-large, 
    $$  \mB^L_{r'/\lambda_i}(y_i)\subset \mB^L_{d_i^++r'/\lambda_i}(z_i^+) \subset \mB^L_{R_i}(z_i^+) \subset \mB^L_{(1+2/\sin(\theta(p)))R_i}(w_i) \subset \mB^L_{r/2}(w_i) \subset \mB^L_{r}(p). $$
    Combining the above inclusions and the boundary/edge monotonicity formula (\cite[Theorem 3.4]{guang2020curvature}, Proposition \ref{Prop: monotonicity fomula}), we have
    \begin{eqnarray*}
        \frac{{\rm Area}(\Sigma_i\cap\mB^L_{r'/\lambda_i}(y_i))}{(r'/\lambda_i)^n} &\leq& (\lambda_i d_i^+/r' + 1)^n \frac{{\rm Area}(\Sigma_i\cap\mB^L_{d_i^++r'/\lambda_i}(z_i^+))}{(d_i^++r'/\lambda_i)^n} \nonumber \\
         &\leq & C \frac{{\rm Area}(\Sigma_i\cap\mB^L_{R_i}(z_i^+))}{R_i^n}\\
         &\leq & C \frac{(\sin(\theta(p))+2)^n}{\sin^n(\theta(p))} \frac{{\rm Area}(\Sigma_i\cap\mB^L_{(1+2/\sin(\theta(p)))R_i}(w_i))}{[(1+2/\sin(\theta(p)))R_i]^n}\\
         &\leq& C \frac{{\rm Area}(\Sigma_i\cap\mB^L_{r/2}(w_i))}{(r/2)^n}\leq C \frac{{\rm Area}(\Sigma_i\cap\mB^L_{r}(p))}{r^n} \leq CC_1r^{-n},
    \end{eqnarray*}
    where we used the facts that $\lambda_i d_i^+$ is uniformly bounded and $R_i>d_i^++r'/\lambda_i$ in the second inequality. 
    This indicates that $\Sigma_i'$ satisfies the uniform Euclidean area growth as in (\ref{eq: proof curvature estimates Euclidean area growth}) for any fixed $r'>0$ and $i$ large enough. 

    If for any $r'>0$ there is a subsequence $\Sigma_i$ so that its connected component passing through $y_i$ has no free boundary on $\bd^+M_{p,r_0}\cap\mB^L_{r'/\lambda_i}(y_i)$. 
    Then as in {\bf Case (i)}, the classical convergence theorem indicates that $\Sigma_i'$ converges smoothly and locally uniformly to a complete embedded stable minimal hypersurface $\Sigma_\infty$ in $P_+\subset \R^{n+1}$ satisfying the Euclidean area growth (\ref{eq: proof curvature estimates Euclidean area growth}), which can not touch $\bd P_+$ by the maximum principle and $|A_{\Sigma_\infty}|(0)=1$. Nevertheless, as a complete stable minimal hypersurface in $\R^{n+1}$ (since $\Sigma\cap\bd P_+=\emptyset$), the Bernstein theorem (\cite{schoen1975curvature}) also gives a contradiction as $|A_{\Sigma_\infty}|(0)=1$. 

    If there is $r'>0$ so that for any $i$ large enough, the connected component of $\Sigma_i$ passing through $y_i$ has free boundary on $\bd^+M_{p,r_0}\cap\mB^L_{r'/\lambda_i}(y_i)$. 
    By the convergence theorem (c.f. \cite[Theorem 6.1]{guang2020curvature}) for FBMHs with bounded area and curvature (\ref{eq: proof curvature estimates 5}), a subsequence of $\Sigma_i'$ converges to an embedded stable minimal hypersurface $(\Sigma_\infty, \bd\Sigma_\infty)\subset(P_+, \bd P_+ )$, which also satisfies the Euclidean area growth (\ref{eq: proof curvature estimates Euclidean area growth}) and $\bd\Sigma_\infty\neq\emptyset$. 
    Note also that $\interior(\Sigma_\infty)$ can not touch $\bd P_+$ due to the maximum principle and $|A_{\Sigma_\infty}|(0)=1$, which indicates $\Sigma_\infty$ is properly embedded. 
    Applying the reflection technique (\cite[Lemma 2.6]{guang2020curvature}) to $\Sigma_\infty$ across $\bd P_+$, we obtain a complete embedded stable minimal hypersurfaces in a Euclidean $(n+1)$-space with non-zero second fundamental form (\ref{eq: proof curvature estimates 5.0}) and Euclidean area growth, which contradicts the Bernstein theorem \cite[Theorem 2]{schoen1975curvature}. 


    {\bf Case (III).} In this case, we have 
    \[ M_i\to \Omega_{\theta(p)}^{n+1} = H_+\cap H_- = \mbox{a wedge domain in the $(n+1)$-space containing } T_pM, \]
    where $H_{\pm}$ are two affine half space. 
    For any fixed $r'>0$ and $i$ large enough, we see 
    $$ \mB^L_{r'/\lambda_i}(y_i) \subset \mB^L_{d_i+r'/\lambda_i}(z_i)\subset \mB^L_{r/2}(z_i)\subset \mB_{r}(p).$$ 
    As before, since $\{\lambda_id_i\}$ are uniformly bounded, the above inclusions and the edge monotonicity formula (Proposition \ref{Prop: monotonicity fomula}) shows 
    $$ \frac{{\rm Area}(\Sigma_i\cap\mB^L_{r'/\lambda_i}(y_i))}{(r'/\lambda_i)^n} \leq C_1r^{-n}2^nC(1+\lambda_id_i/r')^n \leq C. $$
    Therefore, $\Sigma_i'$ also satisfies the uniform Euclidean area growth as in (\ref{eq: proof curvature estimates Euclidean area growth}). 
    Applying the Convergence Theorem \ref{Thm: curvature bound implies convergence}, a subsequence $\{\Sigma_i'\}$ converges to an almost properly embedded locally wedge-shaped stable FBMH $(\Sigma_\infty, \{\bd_m\Sigma_\infty\})\subset (\Omega,\{\bd_{m+1}\Omega\})$, which also satisfies the uniform Euclidean area growth (\ref{eq: proof curvature estimates Euclidean area growth}). 
    Hence, the Bernstein-type theorem \ref{Thm: Bernstein theorem} indicates $\Sigma_\infty=P\cap \Omega$ is flat, where $P$ is an $n$-plane. 
    If $0\notin\bd^E \Sigma_\infty$, then the convergence near $0$ is smooth, so the contradiction with (\ref{eq: proof curvature estimates 5.0}) is achieved. 
    If $0\in\bd^E \Sigma_\infty$, then $\Sigma_i'\cap\mB_{r'}^L(0)$ can be written as a graph of some function $u_i$ on the Euclidean $n$-wedge $\Sigma_\infty\cap\mB_{r'}^L(0) = P\cap \Omega\cap\mB_{r'}^L(0)$ with $\|u_i\|_{C^{1,\alpha}(P\cap\Omega\cap\mB_{r'}^L(0))} \to 0$. 
    Using (\ref{dag})(\ref{ddag}) and the elliptic estimates in \cite[Theorem 2.17, Appendix C]{mazurowski2023min}, one can obtain that $\|u_i\|_{C^{2}(P\cap\Omega\cap\mB_{r'}^L(0))} \to 0$, which again contradicts (\ref{eq: proof curvature estimates 5.0}). 
\end{proof}

\appendix
	\addcontentsline{toc}{section}{Appendices}
	\renewcommand{\thesection}{\Alph{section}}
\section{Proof of the monotonicity formula}\label{Sec: monotonicity formula}

\begin{proof}[Proof of Proposition \ref{Prop: monotonicity fomula}.]
    By Lemma \ref{Lem: local chart}, there exist $r_0>0$ and $C_0>0$ depending only on $M\subset \R^L$ so that for any $p_0\in\bd^E M$ we have a local model $(\phi_{p_0}, \mB_{r_0}^L(p_0), \Omega_{p_0})$ of $M$ around $p_0$ satisfying
    \begin{equation}\label{Eq: mono formula (local chart)}
        \frac{1}{1+\epsilon} \leq |D\phi|_{C^0(\mB^L_{r_0}(0))} \leq 1+\epsilon, \quad |D^2\phi|_{C^0(\mB^L_{r_0}(0))} \leq C_0, \quad |D^2\phi^{-1}|_{C^0(\mB^L_{r_0}(0))} \leq C_0,
    \end{equation}
    where $\epsilon>0$ with $(1+\epsilon)^2\leq 2$. For simplicity, we write $\phi=\phi_{p_0}$, $\Omega=\Omega_{p_0}$. 
    
    Consider the position vector field $X(x) := x$ in $\R^L$, which satisfies $X\llcorner(\Omega\times 0^{L-n-1})\in \mathfrak{X}_{tan}(\Omega\times 0^{L-n-1})$. 
    Define then
    \[Y(p) := D\phi(X(\phi^{-1}(p)))\in T_p\R^L, \quad\mbox{for}~ p\in\mB^L_{r_0}(p_0).\]
    By a direct computation and (\ref{Eq: mono formula (local chart)}), we have
    \begin{equation}\label{Eq: mono formula (test vector field)}
        |Y(p)-(p-p_0)| \leq (1+\epsilon)C_0|p-p_0|^2 \quad\mbox{and}\quad |(DY)_p-\mbox{Id}| \leq (1+\epsilon)^2C_0|p-p_0|, 
    \end{equation}
    for $p\in\mB^L_{r_0}(p_0)$. 
    Without loss of generality, assume $p_0=0$ and write $Y(p) = p+(Y(p)-p)$. 
    
    Let $r(p):=|p|$ for $p\in\R^L$, and $\varphi:\R\to\R$ be a smooth function so that $\varphi\geq 0$, $\varphi'\leq 0$, and $\varphi\llcorner [r_0/2,\infty)=0$. 
    Then, we have $(\varphi\circ r)\cdot Y\llcorner M \in \mathfrak{X}_{tan}(M)$. 
    In addition, by (\ref{Eq: mono formula (test vector field)}) and the computation in \cite[Theore 3.4, Page 9]{guang2020curvature},  
    \begin{eqnarray*}
        \Div_S((\varphi\circ r)\cdot Y) &\geq & \varphi(r)\left(n-n(1+\epsilon)^2C_0 r\right) + \varphi'(r)\left[r(1-|\nabla^\perp_S r|^2) + (1+\epsilon)C_0r^2  \right]. 
    \end{eqnarray*}
    where $S\subset \R^L$ is an $n$-dimensional subspace. 
    Note $0=\delta V((\varphi\circ r)\cdot Y )$ since $V\in \mathcal{V}_n(M)$ is stationary in $M$ with free boundary. 
    Hence, combining the above inequality with the first variation formula (\ref{Eq: 1st variation for varifolds}), we conclude
    \[ \int n\varphi(r) + \int \varphi'(r) r\cdot\big(1+(1+\epsilon)C_0 r\big) \leq n(1+\epsilon)^2 C_0 \int\varphi(r) r + \int \varphi'(r) r\cdot |\nabla^\perp_S r|^2.  \]
    Now, for any $\rho\in (0, r_0/2)$, we take $\varphi(r) = \xi(\frac{r}{\rho})$, where $\xi : [0,\infty)\to [0,1]$ is a smooth cut-off function with $\xi'\leq 0$ and $\xi\llcorner[1,\infty)=0$. 
    Therefore, $\xi(\frac{r}{\rho})=0$ for $r\geq \rho$, and $r\varphi'(r) = -\rho\frac{d}{d\rho}(\xi(\frac{r}{\rho}))$. 
    After plugging these into the above inequality and adding $n(1+\epsilon)C_0\rho \int \xi(\frac{r}{\rho})$ on both sides, we see
    \begin{eqnarray*}
        n\big(1+(1+\epsilon)C_0\rho\big)\int\xi\left(\frac{r}{\rho}\right) - \rho \big(1+(1+\epsilon)C_0\rho\big)\frac{d}{d\rho}\int\xi\left(\frac{r}{\rho}\right) 
        \\
        \leq n(1+\epsilon)(2+\epsilon)C_0\rho\int\xi\left(\frac{r}{\rho}\right) - \rho\frac{d}{d\rho}\int\xi\left(\frac{r}{\rho}\right)|\nabla^\perp_S r|^2 .
    \end{eqnarray*}
    Denote by $I(\rho):= \int \xi(\frac{r}{\rho})$ and by $J(\rho):=\int\xi(\frac{r}{\rho})|\nabla^\perp_S r|^2$. 
    Then we have
    \[ \big(1+(1+\epsilon)C_0\rho\big)\frac{d}{d\rho} \left(\frac{I(\rho)}{\rho^n}\right) \geq \frac{J'(\rho)}{\rho^n} - n(1+\epsilon)(2+\epsilon)C_0\frac{I(\rho)}{\rho^n},\]
    and thus
    \[ \frac{d}{d\rho} \left(\frac{I(\rho)}{\rho^n}\right) + n(1+\epsilon)(2+\epsilon)C_0\frac{I(\rho)}{\rho^n} \geq \frac{J'(\rho)}{\big(1+(1+\epsilon)C_0\rho\big)\rho^n}, \]
    which implies 
    \[ \frac{d}{d\rho} \left[ e^{n(1+\epsilon)(2+\epsilon)C_0\cdot \rho}\cdot \frac{I(\rho)}{\rho^n}\right] \geq \int \frac{e^{n(1+\epsilon)(2+\epsilon)C_0\cdot \rho}}{\big(1+(1+\epsilon)C_0\rho\big)\rho^n} \cdot \frac{d}{d\rho}\left(\xi\left(\frac{r}{\rho}\right)\right) \cdot |\nabla^\perp_S r|^2 \geq 0.\]
    Next, suppose $\xi'\llcorner [0,(1-\delta)\rho]=0$ for some $\delta>0$ small enough. 
    Then, 
    \[ \frac{d}{d\rho} \left[ e^{n(1+\epsilon)(2+\epsilon)C_0\cdot \rho}\cdot \frac{I(\rho)}{\rho^n}\right] \geq \frac{d}{d\rho} \int \frac{e^{n(1+\epsilon)(2+\epsilon)C_0\cdot r}}{\left(1+(1+\epsilon)C_0\frac{r}{1-\delta}\right)\left(\frac{r}{1-\delta}\right)^n} \cdot \xi\left(\frac{r}{\rho}\right) \cdot |\nabla^\perp_S r|^2 .\]
    Finally, since $\epsilon, r_0, C_0>0$ are uniform constants, the monotonicity formula is obtained by taking the integral over $\rho\in [s,t]$ and letting $\xi\to 1_{[0,1]}$, $\delta\to 0$. 
\end{proof}

\section{Maximum principle for FBMHs in wedge domains}\label{Sec: elliptic PDEs}

In this appendix, we show the maximum principle for FBMHs in wedge domains, which is based on the maximum principle of Lieberman \cite{lieberman1987local}. 

\begin{theorem}\label{Thm: maximum principle}
    Let $M^{n+1}\subset \R^L$ be a locally wedge-shaped Riemannian manifold (with induced metric), $B\subset M$ be a simply connected open set. 
    Suppose $(\Sigma_i, \{\bd_m\Sigma_i\}) \subset (B, \{\bd_{m+1}M\cap B\})$, $i\in\{1,2\}$, are two connected $C^{1,1}$-to-edge almost properly embedded free boundary minimal hypersurfaces so that $\Sigma_1\cap\Sigma_2\neq\emptyset$ and $\Sigma_2$ lies on one side of $\Sigma_1$. 
    Then 
    \begin{itemize}
        \item either $\Sigma_1$ is transversal to $\Sigma_2$ at some touching point $p\in  (\interior(\Sigma_i)\cap\bd^FM)\cup \bd^{FE}\Sigma_i$, $i=1$ or $2$, 
        \item or $\Sigma_1=\Sigma_2$.
    \end{itemize}
\end{theorem}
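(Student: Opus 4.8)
The plan is to run a connectedness argument, reducing the whole statement to a local strong maximum principle. Assuming $\Sigma_1$ is not transversal to $\Sigma_2$ at any touching point of the indicated type, I would show that every $p\in\Sigma_1\cap\Sigma_2$ has a neighborhood in $B$ on which $\Sigma_1=\Sigma_2$. Granting this local claim, $\Sigma_1\cap\Sigma_2$ is relatively open in $\Sigma_1$; being also relatively closed and nonempty, and $\Sigma_1$ being connected, this forces $\Sigma_1\subseteq\Sigma_2$, and the symmetric argument gives $\Sigma_2\subseteq\Sigma_1$, hence $\Sigma_1=\Sigma_2$. So all the content is the local claim at a fixed $p\in\Sigma_1\cap\Sigma_2$, which I would prove by a case analysis on the position of $p$ in the stratification of $M$ and on the type of the tangent cones $T_p\Sigma_i$, using the classification \eqref{Eq: classification tangent cone} and Remark \ref{Rem: orthogonally meeting condition}.

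The first step is to pass to a scalar elliptic problem. I claim the one-sidedness of $\Sigma_2$ relative to $\Sigma_1$ forces $T_p\Sigma_1=T_p\Sigma_2$ unless $p$ is a touching point at which the hypersurfaces are transversal --- the excluded case. Indeed, if $T_p\Sigma_1\neq T_p\Sigma_2$ then by \eqref{Eq: classification tangent cone} and Remark \ref{Rem: orthogonally meeting condition} they are transversal; at a point of $\interior(\Sigma_i)\cap\interior(M)$, of $\partial^{FF}\Sigma_i$, or of $\partial^E\Sigma_i$ this contradicts one-sidedness, while at a false boundary point $p\in\interior(\Sigma_i)\cap\partial^F M$ or a false edge point $p\in\partial^{FE}\Sigma_i$ it is precisely the first alternative. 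So suppose $T_p\Sigma_1=T_p\Sigma_2=:\Pi$. Passing to a local model $(\phi_{p_0},\mB^L_r(p_0),\Omega)$ of $M$ around a nearby point $p_0$ of the relevant stratum, Lemma \ref{Lem: small curvature implies graphical} (after rescaling) --- or, away from the edge, simply the $C^{1,1}$-to-edge hypothesis and the implicit function theorem --- lets me write both $\widehat\Sigma_i:=\hat\phi_{p_0}^{-1}(\Sigma_i)$ as graphs of $C^{1,1}$ functions $u_1,u_2$, smooth off the edge, over a common domain $D\subset\Pi$: an open set when $p\in\interior(M)$ or $p$ is a false boundary point; a half-disk when $p\in\partial^{FF}\Sigma_i$, and also when $p\in\partial^{FE}\Sigma_i$ --- in the latter case, by Remark \ref{Rem: orthogonally meeting condition}, both $\Sigma_i$ sit in one half-extension $\tM^{\kappa}_{p_0,r}$ with $\Pi=P_{\pm(\theta-\pi)/2}$; and a horizontal wedge $P\cap\Omega$ when $p\in\partial^E\Sigma_i$. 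Subtracting the minimal surface equations for $u_1$ and $u_2$ in the metric $\hat g:=\hat\phi_{p_0}^*g_{_M}$, the difference $w:=u_1-u_2$ solves a linear, uniformly elliptic equation $a^{ij}D_{ij}w+b^iD_iw=0$ with bounded coefficients, and on the face portions of $\partial D$ it satisfies a homogeneous oblique-derivative condition obtained by subtracting \eqref{Eq: boundary condition of FBMH on horizontal plane} (respectively \eqref{Eq: boundary condition of FBMH on vertical plane}) for $u_1$ and $u_2$. Moreover $w$ does not change sign on $D$ and vanishes at $\hat\phi_{p_0}^{-1}(p)$.

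It then remains to conclude $w\equiv0$ near $\hat\phi_{p_0}^{-1}(p)$. When $p\in\interior(M)$ or $p$ is a false boundary point --- so both $\Sigma_i$ are smooth minimal hypersurfaces near $p$ --- this is the interior strong maximum principle of E.\ Hopf. When $p$ is a smooth face point of both $\Sigma_i$, I would either reflect $\Sigma_i$ across $\partial^F M$ (using the orthogonal-meeting condition of Proposition \ref{Prop: classify FBMH}) and apply the interior principle to the doubled hypersurfaces, or invoke the Hopf boundary-point lemma for the regular oblique-derivative problem directly. When $p\in\partial^E\Sigma_i$, the relevant point is the vertex of the wedge $P\cap\Omega$, where $w$ is only $C^{1,1}$; here I would invoke Lieberman's strong maximum principle for oblique-derivative problems in non-smooth domains \cite{lieberman1987local} (see also \cite{lieberman1988oblique}). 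Its hypotheses are met here: $P\cap\Omega$ satisfies an interior cone condition at the vertex because $0<\theta(p_0)<\pi$; the boundary operators coming from \eqref{Eq: boundary condition of FBMH on horizontal plane} are regular oblique, with obliqueness bounded below since $0<\theta(p_0)<\pi$ and $\|\hat g-g_0\|_{C^0}$ is small; and the coefficients are bounded, with Lipschitz principal part. In every case $w\equiv0$ near $\hat\phi_{p_0}^{-1}(p)$, i.e.\ $\Sigma_1=\Sigma_2$ near $p$, which proves the local claim.

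I expect the edge case to be the main obstacle: there the graph domain $P\cap\Omega$ is a genuine corner, the boundary operators are oblique rather than Neumann and carry metric-dependent coefficients, and one controls $w$ only up to $C^{1,1}$ at the vertex, so neither a smooth-domain Hopf lemma nor a naive reflection is available. This is exactly the setting of Lieberman's results, and the work lies in checking their hypotheses --- in particular the regular obliqueness of the operators \eqref{Eq: boundary condition of FBMH on horizontal plane} and the geometric conditions on the wedge --- for the precise boundary conditions produced in Section \ref{Sec: variations in wedge manifold}. A secondary, bookkeeping issue is verifying case by case that non-transversality at a touching point really forces $T_p\Sigma_1=T_p\Sigma_2$, which is where the one-sidedness hypothesis and the orthogonal-meeting conditions of Proposition \ref{Prop: classify FBMH} are used.
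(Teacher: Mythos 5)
Your proposal is correct and follows essentially the same route as the paper's proof: non-transversality plus one-sidedness forces tangency at every intersection point, the standard (interior/free-boundary) strong maximum principle handles points off the edge, and at edge points one writes both hypersurfaces as graphs over a wedge domain, subtracts the minimal surface equations with the oblique boundary conditions \eqref{Eq: boundary condition of FBMH on horizontal plane}, and applies Lieberman's maximum principle \cite[Corollary 2.4]{lieberman1987local} to the nonnegative difference vanishing at the vertex. Your local-to-global connectedness framing and the explicit case analysis are just a more verbose organization of the same argument.
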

\begin{proof}
    Suppose $\Sigma_1$ does not meet $\Sigma_2$ transversally at any touching point of $\Sigma_i$, $i\in\{1,2\}$. 
    Then for any $p\in \Sigma_1\cap\Sigma_2$ with $p\notin\bd^E\Sigma_1$, we must have $\Sigma_1$ and $\Sigma_2$ are touching at $p$ by Proposition \ref{Prop: classify FBMH}. 
    It follows from the standard maximum principle that $\Sigma_1\setminus\bd^E\Sigma_1=\Sigma_2\setminus\bd^E\Sigma_2$, and thus $\Sigma_1=\Sigma_2$ by taking closure. 
    
    Now we assume $p\in\bd^E\Sigma_1\cap\Sigma_2$. 
    Then the above assumptions and Proposition \ref{Prop: classify FBMH} immediately suggest that $\Sigma_1$ and $\Sigma_2$ are touching at $p\in\bd^E\Sigma_1\cap\bd^E\Sigma_2$. 
    Hence, in a local model $(\phi_{p}, \mB^L_{2R}(p), \Omega_{p}^n\times\R)$ of $M$ near $p$, we can write 
    \[ \Sigma_i = {\rm Graph}(u^{i}) = \{(x_1,\dots,x_n, u^{i}(x_1,\dots,x_n))\},\quad i\in\{1,2\},\]
    for some $u^{i}\in C^{1,1}(\Omega_R)\cap C^\infty(\Omega_R\setminus\bd^E\Omega)$ in a wedge domain $\Omega_R:= \Omega\cap \Clos(\mB^{n}_R(0))$ so that $u^i$ satisfies the minimal surface equation with oblique boundary condition (\ref{Eq: boundary condition of FBMH on horizontal plane}), and
    \begin{equation}\label{Eq: maximum principle 1}
        u^{1}(0)=u^{2}(0)=0, \quad Du^{1}(0)=Du^{2}(0)=0, \quad u^{1}(x)\geq u^{2}(x),~\forall x\in\Omega_R.
    \end{equation}
    By a standard argument (cf. \cite[Lemma 1.26]{colding2011course}), $v:=u^{1}-u^{2}\in C^{1,1}(\Omega_R)\cap C^{\infty}(\Omega_R\setminus\bd^E\Omega)$ satisfies a second order elliptic PDE:
    \[ \left\{ 
    \begin{array}{ll}
    \tilde{a}_{ij}v_{ij} + \tilde{b}_iv_i +\tilde{c}v = 0    & \mbox{in $\interior(\Omega_R)$}, \\
    \tilde{\beta}_{\pm}^iv_i + \tilde{\gamma}_{\pm} v = 0     & \mbox{on $\bd^{\pm}\Omega_R$},
    \end{array}\right. \]
    where $\tilde{a}_{ij}\in C^{0,1}(\Omega_R)$, $\tilde{b}_i,\tilde{c}\in L^\infty(\Omega_R)$, $\tilde{\beta}_{\pm}^i\in C^{1,1}(\bd^{\pm}\Omega_R)$, and $\tilde{\gamma}_{\pm}\in C^{0,1}(\bd^{\pm}\Omega_R)$. 
    Since $v\geq 0$ and $v(0)=0$, it follows from the maximum principle of Lieberman \cite[Corollary 2.4]{lieberman1987local} that $v\equiv 0$, i.e. $\Sigma_1=\Sigma_2$. 
\end{proof}

\bibliographystyle{abbrv}


\bibliography{reference.bib}   

%
%






\end{document}